\newtheorem{thm}{Theorem}[section]
\newtheorem{cor}[thm]{Corollary}
\newtheorem{lemma}[thm]{Lemma}
\newtheorem{prop}[thm]{Proposition}
\numberwithin{equation}{section}
\theoremstyle{definition}
\newtheorem{rem}[thm]{Remark}
\newtheorem{example}[thm]{Example}
\newtheorem{definition}[thm]{Definition}
\newcommand{\bR}{{\mathbb{R}}}
  \newcommand{\B}{{\mathcal{B}}}
  \newcommand{\D}{{\mathcal{D}}}
  \newcommand{\M}{{\mathcal{M}}}
\renewcommand{\P}{{\mathcal{P}}}
  \newcommand{\U}{{\mathcal{U}}}
\newcommand{\fP}{{\mathfrak{P}}}
\definecolor{green1}{RGB}{58, 156, 58}
\definecolor{green2}{RGB}{51, 255, 51}
\definecolor{green3}{RGB}{153, 255, 153}
\definecolor{green4}{RGB}{204, 229, 204}
\definecolor{green5}{RGB}{102, 127, 102}
\definecolor{green6}{RGB}{37, 178, 25}
\definecolor{red1}{RGB}{255, 153, 160}
\definecolor{red2}{RGB}{178, 25, 36}
\definecolor{red3}{RGB}{204, 127, 132}
\definecolor{purple}{RGB}{171, 25, 178}
\begin{document}
%\hfill {\color{blue}  Preliminary version}

\setcounter{tocdepth}{1}

\title[Projective plane graphs and 3-rigidity]{Projective plane graphs and 3-rigidity}

\author[E. Kastis and S.C. Power]{E. Kastis and S.C. Power}

\thanks{2010 {\it  Mathematics Subject Classification.}
{52C25, 51E15} \\
Key words and phrases:  projective plane, embedded graphs, geometric rigidity\\
This work was supported by the Engineering and Physical Sciences Research Council [EP/P01108X/1]}

\address{Dept.\ Math.\ Stats.\\ Lancaster University\\
Lancaster LA1 4YF \\U.K. }

\email{s.power@lancaster.ac.uk}

\email{l.kastis@lancaster.ac.uk}

\maketitle

\begin{abstract}
It is shown that a simple graph which is embeddable in the real projective plane is minimally 3-rigid if and only if it is $(3,6)$-tight. Moreover the topologically uncontractible embedded graphs of this type are constructible from one of 8 embedded graphs by a sequence of vertex splitting moves. In particular the characterisation of minimal 3-rigidity holds for a triangulated  
M\"{o}bius strip.
%, and a fully triangulated M\"{o}bius strip is minimally 3-rigid if and only if its boundary graph is a 6-cycle.
\end{abstract}

%\tableofcontents

\section{Introduction} 
Let $G$ be the graph of a triangulated sphere. Then an associated bar-joint framework $(G,p)$  in $\bR^3$ is known to be minimally rigid if the placements $p(v)$ of the vertices $v$ is strictly convex (Cauchy \cite{cau}) or if the placement is generic. The latter case follows from 
Gluck's result \cite{glu} that any generic placement is in fact  infinitesimally rigid.
An equivalent formulation of Gluck's theorem asserts that if $G$ is a simple graph which is embeddable in the sphere then $G$ is minimally 3-rigid, in the sense of the next paragraph, if and only if it satisfies a $(3,6)$-tight sparsity condition.
We obtain here the exact analogue of this formulation in the case of simple graphs that are embeddable in the real projective plane $\P$.  
As indicated more fully below, the proof rests on viewing embedded graphs as partial triangulations of $\P$ and employing inductive arguments based on edge contractions for certain admissible edges. Accordingly we may state this combinatorial characterisation in the following form. 
An immediate corollary is that this characterisation  also holds for triangulated M\"{o}bius strips.

A simple graph $G$ is \emph{3-rigid} if its generic bar-joint frameworks in $\bR^3$ are infinitesimally rigid and is \emph{minimally 3-rigid} if no subgraph with the same vertex set has this property.

% and using inductive-constructive methods based on vertex-splitting moves and contraction moves for edges in two triangular faces.

\begin{thm}\label{t:projectiveA}
Let $G$ be a simple graph 
associated with a  partial triangulation
of the real projective plane. 
Then $G$ is minimally $3$-rigid if and only if $G$ is $(3,6)$-tight.
\end{thm}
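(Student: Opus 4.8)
The plan is to prove the two implications separately: the forward (necessity) direction by a standard counting argument, and the reverse (sufficiency) direction by the inductive construction announced in the abstract combined with Whiteley's vertex-splitting theorem.

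For necessity, suppose $G=(V,E)$ is minimally $3$-rigid and fix a generic placement $p$ in $\bR^3$. Infinitesimal rigidity means the rigidity matrix $R(G,p)$ has rank $3|V|-6$, while minimality forces its rows to be linearly independent; hence $|E|=3|V|-6$. Since the rows indexed by any subgraph $H=(V',E')$ with $|V'|\ge 3$ are then also independent, and the generic $3$-rigidity matroid of $K_{|V'|}$ has rank $3|V'|-6$, we obtain $|E'|\le 3|V'|-6$. Thus $G$ is $(3,6)$-tight, and this direction uses nothing special about $\P$.

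For sufficiency, suppose $G$ is a $(3,6)$-tight partial triangulation of $\P$. First I would invoke the combinatorial construction result: $G$ is obtained from one of the eight irreducible $\P$-graphs by a finite sequence of vertex-splitting moves within the class of $(3,6)$-tight $\P$-graphs. The engine of the induction is Whiteley's theorem that an \emph{admissible} vertex split — splitting a vertex $v$ into adjacent vertices $v_1,v_2$ that retain at least two common neighbours — preserves both generic infinitesimal $3$-rigidity and independence in the $3$-rigidity matroid, while manifestly preserving the $(3,6)$-count. Consequently, if each base graph is minimally $3$-rigid, then so is $G$, by induction on the length of the construction sequence.

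It then remains to discharge two points. The base step is finite and routine: each of the eight irreducible graphs has few vertices and is $(3,6)$-tight, so minimal $3$-rigidity can be certified by exhibiting an explicit placement in $\bR^3$ whose rigidity matrix has rank $3|V|-6$, equivalently by a direct rank computation at a rational placement. The subtler point, which I expect to be the main obstacle, is verifying that \emph{every} split produced by the combinatorial reduction is admissible in Whiteley's geometric sense, i.e.\ that the two created vertices always share two common neighbours. In a fully triangulated region this is automatic, since $v$ lies on triangular faces whose third vertices supply the common neighbours; the care is needed precisely where the partial triangulation fails to be triangulated near the split vertex, and one must confirm that the edge contractions underlying the construction are selected so that their inverses meet this requirement.
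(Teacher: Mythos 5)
Your proposal is correct and follows essentially the same route as the paper: necessity by the standard independence/Maxwell count, sufficiency by combining the construction theorem (Theorem \ref{t:construction}) with Whiteley's vertex-splitting theorem and the minimal $3$-rigidity of the eight base graphs (each having fewer than $8$ vertices). The admissibility worry you flag is automatically resolved because the reduction only ever contracts $FF$ edges, i.e.\ edges lying in two facial $3$-cycles, so the inverse splits always produce two vertices with the required pair of common neighbours.
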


 Recall that a graph $G=(V,E)$ is $(3,6)$-tight if it satisfies the Maxwell count $|E|=3|V|-6$ and the sparsity condition $|E'|\leq 3|V'|-6$ for subgraphs $G'$ with an edge or at least 3 vertices. In particular it follows that such a graph falls 3 edges short of arising from a full  triangulation of $\P$.
The proof of Theorem \ref{t:projectiveA} depends heavily on our main result, Theorem \ref{t:construction}, which gives a purely combinatorial constructive characterisation of (3,6)-tight graphs $G$ which are embeddable in the real projective plane.
A key step is a criterion for the existence of an edge contraction move for an  embedded edge  that lies in two 3-cycle faces (called $FF$ edges), such that the $(3,6)$-sparsity condition is preserved. This is done, in Section \ref{s:contractionmoves}, by exploiting the topological environment of the embedded graph. An associated edge contraction sequence must terminate and the terminal embedded graph is said to be \emph{irreducible}. We show that such irreducibles have the defining property that each contractible embedded edge (one for which the contraction is simple) lies on the boundary walk of a $(3,6)$-tight embedded subgraph. These boundary walks are the critical walks in $G$ discussed in Section \ref{ss:criticalcycles}.
 In Section \ref{s:theirreducibles} we show that there are 9 irreducible embedded graphs, including 2 embedded graphs for $K_3$.
As a corollary of this identification we see that the irreducibles coincide with the apparently smaller class of (3,6)-tight embedded graphs which have no contractible edges, that is, they have no $FF$ edges for which the contraction gives a simple graph. 

The identification of the irreducibles makes repeated use of 
Corollary \ref{c:propercontainment}. This ensures that the boundary walk of a nontriangular face of an irreducible cannot be interior, in a natural sense, to a critical cycle of the same length. The various proofs proceed by a case by case analysis according to the existence of $FF$ edges and the number nontriangular faces. In the appendix we give a different proof strategy and determine directly the 9 \emph{uncontractible} embedded graphs. This is a case by case analysis depending on the minimum hole incidence degree given in  Definition \ref{d:holedegree}.

%Also, it is shown that there  is one such irreducible which is topologically contractible, in the sense of being a subset of an embedded open disc, this is an embedding of $K_3$ with a single triangular face, and that there are 8 topologically uncontractible irreducibles. 

The determination of construction schemes and their base graphs  for various classes of graphs is of general interest, both for embedded graph theory and for the rigidity of bar-joint frameworks.
We note, for example, that Barnette \cite{bar} employed vertex splitting moves for the construction of triangulations of 2-manifolds and showed that there are 2 (full) triangulations of $\P$ which are  uncontractible. Also, Barnette and Edelson \cite{bar-ede-1}, \cite{bar-ede-2} have shown that all 2-manifolds have finitely many minimal uncontractible triangulations. %{\color{blue}(rest of paragraph ("Our construction ... ") moved below.}
%{\color{blue}NEW TEXT:}
With respect to generic rigidity, Fogelsanger \cite{fog} has shown that a finite simple graph given by a triangulated compact surface without boundary is 3-rigid. For the projective plane this was obtained earlier by Whiteley  \cite{whi} using the vertex splitting method and Barnette's characterisation of the uncontractible graphs.
%The proof is an extended argument employing combinatorial edge contraction methods to reduce to the case of a complete graph, which is evidently 3-rigid. The methods also extend to higher dimensions. 
With the exception of the sphere, the graph of a fully triangulated surface without boundary is over-constrained, in the
sense that $|E| > 3|V | - 6$. Characterising the minimal 3-rigidity of partial triangulations is therefore a natural topic and is one which requires additional methods.
%{\color{blue}Text moved up and edited:}

Following Whiteley's demonstration that vertex splitting preserves generic rigidity this construction move
%, as well as related moves in higher dimensions, 
has become an important tool in combinatorial rigidity theory \cite{gra-ser-ser}. See, for example, the more recent studies for the graphs of modified spheres \cite{fin-whi}, \cite{fin-ros-whi}, \cite{cru-kit-pow-1},  \cite{jor-tan}, and for the graphs given by a partially triangulated torus \cite{cru-kit-pow-2}. 
The structure of the proof of the main results here follows a similar path to the torus case. In particular we use so-called \emph{face graphs} to define embeddings, as in Figure \ref{f:irred_Five}, where opposite vertices and edges of the boundary of the hexagon are identified.
Also,  Lemmas \ref{l:obstacle1} and \ref{l:obstacle2} are projective plane counterparts of Lemma 4.4 of \cite{cru-kit-pow-2}. On the other hand we find it convenient to introduce \emph{surface graphs}, as defined in Section \ref{s:terminology}, where graphs carry a given triangle face structure.  The proof of Theorem \ref{t:projectiveA}, given in Section \ref{s:mainproof}, follows from Whiteley's theorem, the identification of the irreducible embedded graphs, given in Section \ref{s:theirreducibles}, 
and the fact that the irreducibles have graphs that are minimally 3-rigid.

\section{Surface Graphs}\label{s:terminology} Let $\M$ be a classical surface, by which we shall mean a connected compact surface, possibly with boundary. Then we define 
a \emph{surface  graph for $\M$} to be a triple $G=(V,E,F)$ where $(V,E)$ is a simple graph, with no loop edges, $F$ is a set of $3$-cycles of edges, called facial 3-cycles, and where there exists a faithful embedding of $G$ in $\M$ for which the facial 3-cycles correspond to the 3-sided faces determined by the embedded graph.  A  surface   graph for $\M$, which we also refer to as an \emph{$\M$-graph}, can thus be viewed as a simple graph together with a set of ``facial"  3-cycles which is obtained from a  triangulation of $\M$ by discarding vertices, edges and faces. 
%Here we require the full triangulation to have a simple graph.
We also say that $G$ is a \emph{triangulated surface graph for $\M$} (or a fully triangulated surface graph for $\M$ for clarity), if no vertices, edges or faces are discarded, so that the union of the embedded faces is equal to $\M$. 

Classical compact surfaces are classified up to homeomorphism by combinatorial surfaces and, moreover,  combinatorial surfaces arise from
triangulated polygons by means of an identification of certain pairs of boundary edges. See \cite{gil-por} for example. We now formally define  labelled graphs of this type {together with their facial structure} and refer to them as \emph{face graphs}. In this definition by a \emph{triangulated disc} we mean, in the terminology above, a triangulated surface graph for the surface which is a closed topological disc.
% and we shall use the language of face graphs throughout much of the discussion. 

\begin{definition}\label{d:facegraph}
A \emph{face graph}
%, for a triangulated surface graph,
is a pair $(B, \lambda)$ where $B$ is a triangulated disc and $\lambda$ is a partition of the edges of the boundary of $B$ such that each set of the partition has $1$ or $2$ edges, and the paired edges of the partition are directed.
\end{definition}

A face graph $(B, \lambda)$  defines a simplicial complex $M$, with $1$-simplexes provided by edges and identified edge pairs, and 2-simplexes provided by the facial 3-cycles, and this complex defines a surface $\M$. We shall be concerned mainly with the projective plane $\P$, and its associated subsurfaces such as M\"obius strips and closed discs and cylinders. We remark that the sphere does not arise in this way but is obtained from a triangulated disc with 3-cycle boundary with this 3-cycle added as an additional facial 3-cycle.

If the identification graph, denoted $B/\lambda$, is simple then it follows that $B/\lambda$, with the set $F$ of facial 3-cycles of $M$, is a triangulated surface graph for $\M$. We also write  $B/\lambda$ for this surface graph.

%Following the constructions in the torus case \cite{cru-kit-pow-2} 
%We now consider subgraphs of triangulated surface graphs. 
%We are concerned simple graphs that can be embedded in a connected classical surface $\M$.More precisely we shall be concerned with embedded graphs, which 

We now define particular $\M$-graphs in the following similar fashion in terms of a \emph{modified face graph} $(B_0,\lambda)$. By this we mean that 
$B_0$ is a proper subgraph of $B$ which contains the boundary subgraph 
$\partial B$ where $(B,\lambda)$ is a face graph as above. Here $\partial B$ is the graph induced by the edges which have one incident face. Now $(B_0,\lambda)$ gives an identification graph $B_0/\lambda$ with a facial structure inherited from the triangulation of $B$. Also the construction gives a particular embedding of this surface graph in $\M$. Such $\M$-graphs are special in that 
they contain the embedded subgraph associated with $\partial B$. 

\begin{center}
\begin{figure}[ht]
\centering
\includegraphics[width=4.5cm]{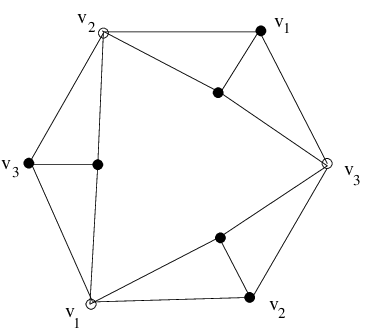}
\caption{A modified face graph $(B_0,\lambda)$ for a $\P$-graph.}
\label{f:mobiussmall}
\end{figure}
\end{center}

{ Let us note that, formally, an \emph{embedding} $\alpha:G\to \M$ of a surface graph $G=(V,E,F)$ in the surface $\M$ is a triple of maps,  $\alpha_V, \alpha_E, \alpha_F$, where $(\alpha_V,\alpha_E)$ is a graph embedding of $(V,E)$ (with $\alpha_E(e)$ a closed set for each edge $e$), and $\alpha_F(f)$ is the closed face of the embedded graph $(\alpha_V(V),\alpha_E(E))$ corresponding to the facial 3-cycle $f \in F$.

\begin{example} \label{e:mobiusfacegraph}
Figure \ref{f:mobiussmall} shows a modified face graph $(B_0,\lambda)$. The labelling of outer boundary edges and vertices determines $\lambda$, that is, the pairs of directed edges that are identified. Any triangulation of the interior of the inner 6-cycle gives a containing face graph $(B,\lambda)$ for a (fully) triangulated surface graph for a surface, as long as $B/\lambda$ is simple. In view of the identifications the topological surface for $(B,\lambda)$ is the real projective plane $\P$ and so $(B_0,\lambda)$ determines a surface graph $G=(V,E,F)=B_0/\lambda$ for $\P$, with 6 facial 3-cycles.
\end{example}

The surface  graph $G$ of Example \ref{e:mobiusfacegraph}  happens to be a fully triangulated surface graph for the M\"{o}bius strip. In general however, the closed set in $\P$ determined by the embedding of a modified face graph and its faces need not be a surface, or surface with boundary. This is because of the possibility of exposed edges which are not incident to any face. This is also true for the embeddings of $(3,6)$-tight graphs that we consider in the next section. See, for example, the first graph in Figure \ref{f:irred_Five}. This exhibits a modified face graph $(B_0,\lambda)$ where $\lambda$ identifies the opposite edges of a 6-cycle and where all other edges and vertices of a containing face graph $(B,\lambda)$ have been removed. It defines a surface graph $G=(V,E,F)$ with $F$ the  empty set and with underlying graph equal to $K_3$.

\subsection{$\P$-graphs}\label{ss:P-graphs} Of particular concern for us are the embeddings of $(3,6)$-tight graphs in the projective plane. 
Let $(B,\lambda)$ be a triangulated face graph for $\P$ which is given by a triangulated disc $B$ whose outer boundary is a directed cycle of even length $2r, r \geq 2$, with $\lambda$ the set of paired opposite directed edges. Let $(B_0,\lambda)$ be a modified face graph for $(B,\lambda)$ such that  
%the with $\partial B\subset B_0$ and associated with $k\geq 1$ discs, defines a surface graph 
the graph $B_0/\lambda$ is simple. Then, as in the example above, the modified face graph determines a surface graph $H$ for $\P$ and an associated embedding $\pi(H)$. We also write $B_0/\lambda$ and $B/\lambda$ for the surface graphs associated with $(B_0,\lambda)$ and $(B,\lambda)$.

\begin{definition}\label{d:annular} (i) A modified face graph $(B_0,\lambda)$ for $\M$  is  \emph{annular} if it is obtained from a face graph $(B,\lambda)$ by deleting the \emph{internal} edges and vertices of a triangulated subdisc $D$, that is, the edges and vertices not in $\partial D$.

(ii) If $\lambda$ is trivial then $(B_0,\lambda)$ is an \emph{annulus face  graph}, and $B_0$ is a \emph{triangulated annulus}, if the boundary graphs $\partial B$ and  $\partial B_0$ are disjoint. Also  $(B_0,\lambda)$ is a 
\emph{degenerate annulus face graph}, and $B_0$ is a \emph{degenerate triangulated annulus} if the boundary graphs are not disjoint.
\end{definition}

An annular modified face graph $(B_0,\lambda)$ of a $2r$-cycle face graph for $\P$ can be viewed as having a single ``hole". In a similar way we can consider modified face graphs $(B_0,\lambda)$ that contain a number of holes corresponding to the removal of the interior edges and vertices of a number of triangulated disc subgraphs of $(B,\lambda)$ with disjoint interiors.  

We now note that the $\P$-graph for modified $2r$-cycle face graph fails to have the following topological property.

\begin{definition}\label{d:contractible}
An embedding $\pi(H)$ of an $\M$-graph  $H$ in the surface $\M$ is 
\emph{topologically contractible}  if the closed set given by the union of the embedded edges and faces of $H$ is contained in an open subdisc of $\M$. \end{definition}

 To see that the embedded surface graph $\pi(B_0/\lambda)$ in $\P =\pi(B/\lambda)$  is not topologically contractible note first that the image of $\partial B/\lambda$  determines a closed curve in $\P$ with nontrivial homotopy class. On the other hand every closed curve in an open subdisc of $\P$ has trivial homotopy class.

In Lemma \ref{l:converseforuncontractibles} we show that every $(3,6)$-tight embedded $\P$-graph that is topologically uncontractible may be represented by a modified face graph having either 1, 2 or 3 holes.

\begin{rem}  We have found it useful to introduce graphs with an explicit facial structure since the edge contraction operations used in the reduction proofs below are for edges that lie in two facial 3-cycles. Moreover the facial structure arising from an embedding of a simple graph is already given in a modified face graph realisation of the embedding. 

The following equivalent definition of a fully triangulated surface graph is purely combinatorial.  This is of interest since the simplicial complex setting is appropriate for generalisations, both to higher dimensions (with homology cycles generalising surfaces) and for graph embeddings in nonmanifolds.

%A \emph{triangulated surface graph} is a simple graph 
Let $G = G(M)$ be the 
determined by the $1$-skeleton of a finite simplicial complex $M$
together with a set $F$ of facial 3-cycles determined by the $2$-simplexes
of $M$ where $M$ has the following properties.

\begin{enumerate}[(i)]
\item $M$ consists of a finite set of $2$-simplexes 
%$\sigma_1, \sigma_2, \dots $ 
together with their $1$-simplexes and $0$-simplexes.
\item Every $1$-simplex lies in at most two $2$-simplexes.
\item The 2-simplexes incident to each 0-simplex induce the simplicial complex of a triangulated disc.
\end{enumerate}
%\end{definition}
%\medskip

Condition (i)  implies that each 1-simplex lies in at least one 2-simplex and so if $G$ is a connected graph then $M$ can be viewed as a  {combinatorial surface} and this determines a classical topological surface $\M$, possibly with boundary. It follows that $G$, with the facial structure $F$ provided by 2-simplexes, is a triangulated surface graph for  $\M$. 
\end{rem}

\section{Contraction moves and (3,6)-sparsity.}\label{s:contractionmoves}
Let $G=(V,E,F)$ be a surface  graph.
% with face graph $\hat{G}$.
An edge of $G$ is of {\em type $FF$} if it is contained in two facial $3$-cycles and an $FF$ edge is \emph{contractible} if it is not contained in any non-facial $3$-cycle. We say that $G$ is \emph{contractible} if it has a contractible $FF$ edge. For such an edge $e=uv$ there is a natural contraction move $G \to G'$ corresponding to a contraction of $e$ merging $u$ and $v$ to a single vertex, leading to a surface  graph $G'=(V',E',F')$ where $|V'|=|V|-1, |E'|=|E|-3, |F'|=|F|-2$.

To define formally the contracted graph $G'$, let $e=vw$ be a contractible $FF$ edge in $G$ and let $avw$ and $bvw$ be the two facial 3-cycles which contain $e$. Then $G'$ is obtained from $G$ by an \emph{edge contraction} on $e=vw$ if $G'$ is obtained by (i) deleting the edges $aw$ and $bw$, (ii) replacing all remaining edges of the form $xw$ with $xv$, (iii) deleting the edge $e$ and the vertex $w$ and discarding the faces $avw$ and $bvw$.
That $G'$ is simple follows from the fact that a contractible $FF$ edge does not lie on a nonfacial 3-cycle. 

Given an edge contraction move $G \to G'$ we may consider the inverse move, recovering  $G$  from $G'$, which we define to be a \emph{planar vertex splitting move}, or  \emph{vertex splitting move of planar type}, at the vertex $v$. In particular this move introduces a new vertex $w$, 2 new facial 3-cycles, and the new $FF$ edge $vw$. Intuitively, taking account of an embedding of the surface graph $G'$ in a surface $\M$, this corresponds to a construction of a new
surface graph $G$ with embedding in $\M$. For comparison we note the form of a general vertex splitting move defined at the level of simple graphs.

Let $H' = (V',E')$ be a simple graph with vertices $v_1, v_2, \dots , v_r$ and let $v_1v_2, v_1v_3, \dots , v_1v_n$, with $n\geq 3$, be
the edges of $E'$ that are incident to $v_1$. Let $H = (V, E)$ arise from $H'$ by the introduction of a
new vertex $w$, new edges $wv_1, wv_2, wv_3$ and the replacement of any number of the remaining
edges $v_1v_t$, for $t > 3$, by the edges $wv_t$. Then, the move $H' \to H$ is said to be a vertex splitting move on $v_1$.

The \emph{boundary} $\partial G$ of a surface graph $G$ is defined to be the graph determined by the set of edges which are not of $FF$ type. Note that this graph does not depend on any particular embedding of the surface graph.

%{\color{blue}TO MOVE/ADJUST} We remark that in the reduction proofs of subsequent sections an explicit embedding of a surface graph is chosen at the outset, either in terms of a modified face graph or in terms of a topological embedding (such as  Figure \ref{f:case_b_noFF}). Since reductions occur through the contraction of certain $FF$ edges the number $k$ of non triangular faces (or ``holes") of the embedded surface graph does not change. 

\subsection{(3,6)-sparse $\P$-graphs.}\label{ss:mobius36tight}
If $H=(V,E)$ is a graph then its \emph{freedom number} is defined to be $f(H)=3|V|-|E|$. Then  $H$ is  \emph{$(3,6)$-sparse} if  $f(H')\geq 6$ for any subgraph $H'$ with at least 3 vertices, or an edge, and is
\emph{$(3,6)$-tight} if it is $(3,6)$-sparse
and $f(H)=6$. In particular a $(3,6)$-tight graph is a simple connected graph, with no loop edges and no parallel edges. We also consider the freedom number of a surface graph to be the freedom number of its underlying graph.

Let us recall that a triangulated surface graph $G=(V,E,F)$ for the sphere $S^2$ is $(3,6)$-tight.  Indeed, regard the associated  graph $H$ as a planar graph and consider a reduction $H\to H'$ on deleting a single interior vertex
and its incident edges and adding chordal edges to the new face to triangulate it. Then $3|V'|-|E'|=3|V|-|E|$ and by induction
$3|V|-|E|$ agrees with the count for the triangle, and so $3|V|-|E|=6$. Similarly for any subgraph $H''$ of $H$ it follows that $3|V''|=|E''|\geq 6$.

%Indeed this is so for  $|V|=3$, the case of a triangulated surface graph with two triangular faces. For $|V|\geq 4$ it is well-known that there exists a contractible $FF$ edge {\color{blue}(REFERENCE)}. Since the planar vertex-splitting moves preserves the freedom count and $(3,6)$-sparsity it follows that $G$ is $(3,6)$-tight.

\begin{lemma}
Let $G$ be a topologically contractible $(3,6)$-tight surface graph for $\P$ with more than three vertices. Then $G$ has a contractible $FF$ edge.
\end{lemma}

\begin{proof}
It follows from the definition of topologically contractible that $G=(V,E,F)$ has a surface graph embedding in an open subdisc $\D$ of $\P$ and so each face of $G$ maps to a face of the embedding of $(V,E)$ in $\P$ that is contained in $\D$. Since $G$ is $(3,6)$ tight it follows from the previous discussion that the surface graph $G$ is a triangulated surface graph
of a face graph $(B,\lambda)$ where $|\partial B|=3$ and $\lambda$ is trivial. 

The existence of a contractible $FF$ edge clearly holds when k=4. Assume then that such an edge exists whenever  $4 \leq |V|\leq n$ and that $ |V(G)| =n +1$. Consider an interior (non boundary) edge of $G$, say $e =uv$, with associated edges $xu, xv$ and $yu, yv$ for its adjacent faces. If $e$ is not contractible then there is a nonfacial triangle in $G$ with edges $zu, zv, uv$. The subgraph consisting of the 3-cycle $zu, zv, uv$ and its interior determines a new face graph with triangle boundary with fewer vertices than $G$ and it contains at least 4 vertices,
%, since it contains xor y, 
and so by the induction hypothesis $G$ contains a contractible interior edge.  
\end{proof}

Assume now, as in the previous section, that $(B,\lambda)$ is a face graph for $\P$ with $\partial B$ a directed cycle graph of even length $2r, r \geq 2$, with $\lambda$ the set of paired opposite directed edges.
If the identification graph of $B/\lambda$ is simple then $B/\lambda$ is a triangulated surface graph, $S$ say, for $\P$. 
The freedom number $f(B)$ is equal to $6+(2r-3)$, since $B$ may be viewed as a triangulated sphere (which has freedom number $6$) with $2r-3$ edges removed. Noting that $S$ is related to $B$ by the loss of $r$ vertices and $r$ edges it follows that 
\[
f(S)= (3+2r) - 3r + r = 3.
\]  

Let $G$ be a $\P$-graph which is determined by 
an annular modified face graph $(B_0,\lambda)$ associated with $B$. If the  inner boundary cycle has length $s$ then $f(G) = f(S)+ (s-3)$. Thus  $f(G)=6$ if and only if $s=3$. Similarly suppose that $(B_0,\lambda)$
is obtained from $(B,\lambda)$ by removing the interior edges and vertices of several interior-disjoint triangulated subdiscs of $B$. Then $f(G)=6$ if and only if either there are two such subdiscs with boundary cycle lengths 5 and 4, or three subdiscs each with a boundary cycle of length 4.

\begin{lemma}\label{l:tightimplies12or3}
Let $G$ be the $\P$-graph determined by a modified face graph $(B_0,\lambda)$ for the face graph $(B,\lambda)$ for $\P$. If $G$  is (3,6)-tight then $B_0$ has $k$ nontriangular faces, for $k=1, 2$ or $3$ where, for $k=1$ the face has a 6-cycle boundary, for $k=2$ the boundary cycles have length 5 and 4, and for $k=3$ the boundary cycles have length 4.
\end{lemma}

\begin{proof} 
%We have already shown the sufficiency of the conditions. For their necessity for the $(3,6)$-tightness of $G$ 
Note first that there can be no interior vertices of $B$ which appear in $B_0$ with degree 1 for these vertices would appear in $G$ with degree 1. It follows that $B_0$ is obtained from $B$ by deleting the interior edges and vertices of several interior-disjoint triangulated subdiscs of $B$. Since the Maxwell count $f(G)=6$ must hold it follows from our previous remarks that $B_0$ satisfies the conditions of the lemma.
\end{proof}

The necessary conditions given in Lemma \ref{l:tightimplies12or3} for $(3,6)$-tightness are not sufficient conditions since, as we see more precisely in Section \ref{ss:criticalcycles}, there are constraints on the lengths of cycles which go around holes.

%\begin{cor}\label{c:mobius} {\color{blue}not true!, correct and move after 1 hole theorem} Let $H$ be the graph of a triangulation of the  M\"{o}bius strip. Then $H$ is $(3,6)$-tight if and only if the boundary graph of $H$ is a 6-cycle.
%is a surface graph for $\P$. Also $G$ satisfies the Maxwell count if and only if the boundary graph $\partial G$ is the graph of a simple 6-cycle.
%\end{cor}

%\begin{proof}
%The  M\"{o}bius strip has an embedding in $\P$ whose complement is homeomorphic to an open disc. Thus $H$ determines a $\P$-graph as in Lemma \ref{l:tightimplies12or3} with $k=1$.
%\end{proof}

%Let $G$ be a $\P$-graph which is represented by a modified face graph $(B_0,\lambda)$ for which $\partial B$ is a $2r$-cycle. In particular since $G$ contains the $r$-cycle induced by $\partial B$ the embedding of $G$ is not topologically contractible in the sense given in Example \ref{e:contractible embedding}. If $G$ satisfies the Maxwell count $f(G)=6$ then $B_0$ has $k$ holes (nontriangular faces) with $k=1, 2$ or $3$. For $k=1$ the face graph $(B_0,\lambda)$ is annular with a 6-cycle of edges forming the inner boundary. This 6-cycle can intersect and even coincide with the outer boundary cycle of $B$.
%For $k=2$ there are two inner boundary cycles of length $5$ and $4$ corresponding to the boundaries of the interior-disjoint discs defining $G$, while for $k=3$ there are three inner boundary cycles which are 4-cycles. For $k=1$ (resp. 2, resp. 3) we view $(B_0, \lambda)$ and the embedded $\P$-graph as having $k=1$ (resp. 2, resp. 3) ``holes".

As we have noted in the previous section, the $\P$-graphs of Lemma \ref{l:tightimplies12or3} are not topologically contractible. On the other hand Lemma \ref{l:converseforuncontractibles} shows that every $(3,6)$-tight $\P$-graph which is not topologically contractible has a modified face graph representation as in Lemma \ref{l:tightimplies12or3}. 
%We make use of the following lemma.

\begin{lemma}\label{l:technicalLemma}
An embedding of a  $(3,6)$-tight surface graph in $\P$ is topologically contractible if the image of each cycle of edges lies in an open subdisc.
\end{lemma}

\begin{proof} A $(3,6)$-tight surface graph $G$ is not a tree and so  contains cycles $c$.
Let $\pi:G\to \P$ be the embedding and note that the open set $\P\backslash \pi(c)$ has two components, one an embedded open disc $\D_c$ with boundary $\pi(c)$, and the other a M\"obius strip. Consider the union of two such open discs together with their boundary curves. This is a proper closed subset since otherwise it would contain a non contractible curve and hence a noncontractible cycle in $\pi(c_1)\cup\pi(c_2)$, contrary to the hypotheses.  Similarly, by induction, the union, $\B$ say, of all the sets $\D_c$ and their boundaries $\pi(c)$ is a proper closed subset which contains $\pi(G)$ (including the images of the faces). Since $G$ is 2-connected the boundary of this closed set is a subset of $\pi(E)$ which is a union of cycles. By construction the boundary must be a single cycle, and $\B$ is an embedded closed disc, and so the lemma follows.
\end{proof}

\begin{lemma}\label{l:converseforuncontractibles}
Let $G$ be a topologically uncontractible  $(3,6)$-tight $\P$-graph. Then there is a modified face graph $(B_0,\lambda)$ for a $2r$-cycle face graph $(B,\lambda)$ for $\P$ such that $G$ is isomorphic to the surface graph $B_0/\lambda$.
\end{lemma}

\begin{proof} Let $\pi$ be an embedding of the face graph $G$ in $\P$. In particular the set of embedded faces, $\pi(F)$, accounts for all the triangular faces determined by $\pi(E)$. 
By the previous lemma there is a cycle $c$ of edges $e_1, \dots , e_r$ for which $\pi(c)$ is not topologically contractible. The complement of $\pi(c)$ in $\P$ is therefore an open subdisc which is partially triangulated by the set by $\pi(F)$. Moreover this partial triangulation may be extended to a full triangulation of $\P$ by triangulating the nontriangular embedded faces. 
This is associated with a corresponding triangulation of a closed disc $B$ with a boundary curve a $2r$-cycle, corresponding to a repetition of the $r$-cycle $c$, and the desired representation of $G$ follows.
\end{proof}

\begin{rem}\label{r:uniqueness}
 We have shown that a topologically uncontractible $(3,6)$-tight $\P$-graph $G$ has a modified face graph representation $(B_0,\lambda)$, with $k=1,2$ or $3$ nontriangular holes, and an associated $2r$-cycle face graph $(B,\lambda)$ for $\P$. This defines a particular surface graph embedding $\alpha:G \to \P$. We now note that any two surface graph embeddings
of $G$ are naturally equivalent. In fact this equivalence is not needed in subsequent arguments.  

%{\color{blue}NEWLY EDITED needs checking/smoothing} 
Two surface graph embeddings $\alpha, \beta:G \to \P$ are \emph{equivalent}  if 
%in the following natural sense (which extends the notion of homeomorphic equivalence of graph embeddings in $\P$), namely  that 
there exists a homeomorphism $\phi$ of $\P$ such that $\phi\circ \beta = \alpha$, that is, such that the following equalities of closed sets holds:
\[
\phi(\beta_V(v)) = \alpha_V(v),\quad
\phi(\beta_E(e)) = \alpha_E(e),\quad
\phi(\beta_F(f)) = \alpha_F(f), \quad \forall v\in V, e\in E, f\in F.
\]
To see that surface graph embeddings $\alpha, \beta: G\to \P$ are equivalent in this sense we may assume that $\alpha$ is equal to the modified face graph embedding $\lambda_\alpha$ associated with $(B_0^\alpha,\lambda_\alpha)$ and the face graph $(B^\alpha,\lambda_\alpha)$. Thus $\alpha$ has an extension to an embedding $\alpha^+$ where 
\[\alpha: G=B_0^\alpha/\lambda_\alpha \to \P=\P_\alpha, \quad \quad
\alpha^+: G^+= B^\alpha/\lambda_\alpha \to \P=\P_\alpha. 
\]
%and that $\P_\alpha$ has a (full) triangulation given by the faces of the embedded graph $\pi(B^\alpha/\lambda_\alpha)$.
The second embedding, namely 
\[\beta: G =B_0^\alpha/\lambda_\alpha \to \P_\alpha,
\] 
gives rise to a new partial triangulation of $\P_\alpha$ given by the faces of $\beta(G)$. In other words, with $G=(V,E,F), $ we have the two partial triangulations of $\P$ given by $\alpha(F)$ and $\beta(F)$ as well as a full triangulation $\alpha^+(F^+)$ of $\P$ which extends $\alpha(F)$ by means of  a triangulation  of the remaining nontriangular faces of the embedded graph $\alpha((V,E))$. 
Note that the nontriangular faces of $\alpha((V,E))$ and $\beta((V,E))$ have interiors which are open discs and their boundary walks are given by the embeddings of the boundaries of the holes of $B_0^\alpha$.
In view of this we may construct an extension $\beta^+: G^+ \to \P$ of $\beta$
by triangulating each nontriangular face of $\beta(G)$ with a pattern that matches the given triangulation of the corresponding nontriangular face of $\alpha(G)$.
It is now elementary book-keeping to construct a homeomorphism $\phi$ so that $\phi\circ \beta^+=\alpha^+$. Considering restrictions to $G$ we see that $\alpha$ and $\beta$ are equivalent.
\end{rem}

We make use of following notation for topologically uncontractible  $(3,6)$-tight  $\P$-graphs. 

\begin{definition}\label{d:P_k}
The set $\fP_k,$ for  $k=1,2,3$, is the set of  $(3,6)$-tight  $\P$-graphs which are representable by modified face graphs with $k$ nontriangular faces.
\end{definition}

We also define an \emph{embedded triangulated disc} in $\P$ to be the image of a triangulated disc surface graph under an embedding, 
in the relaxed sense of that distinct faces map to distinct faces but distinct vertices or edges on the boundary may have the same image in $\P$.
Such an embedded triangulated disc can always be extended to a triangulated surface graph for $\P$.
Note that the interior of the closure of the faces of an embedded triangulated disc is evidently homeomorphic to an open disc. On the other hand the closure of the faces need not be homeomorphic to a closed disc and indeed can be equal to $\P$.

\subsection{When contracted surface graphs are $(3,6)$-tight}
A contraction move $G \to G'$ on a contractible $FF$ edge $e$ of a surface  graph preserves the Maxwell count but need not preserve $(3,6)$-tightness. 
We now examine this more closely in the case of a surface  graph for the real projective plane $\P$.

Suppose that $G_1\subseteq G$ and that both $G_1$ and $G$ are in $\fP_1$. 
%so that the hole of $H$ contains the hole of $G$. 
If $e$ is a contractible $FF$ edge of $G$ which lies on the boundary graph of $G_1$  then, since $G_1$ contains only one of the facial 3-cycles incident to $e$, the contraction $G\to G'$ for $e$ gives a contraction $G'$ which is not $(3,6)$-sparse, since $f(G_1')=5$. We shall show in Lemmas \ref{l:obstacle1}, \ref{l:obstacle2} that the failure of an edge contraction to preserve $(3,6)$-sparsity is due to such  subgraph obstacles. 

The following lemma, which we refer to as the filling in lemma, was  obtained for the torus in Lemma 4.3 of  \cite{cru-kit-pow-2}, and an earlier variant for block and hole graphs is Lemma 26 of \cite{cru-kit-pow-1}. 
%For completeness we repeat the proof from \cite{cru-kit-pow-2}.

\begin{lemma}\label{l:fillingin}
Let $G_*$ be the underlying graph of a $(3,6)$-tight surface graph $G$ for $\P$ and let $H$ be the graph of an embedded triangulated disc graph in $G$ with boundary graph $\partial H$. 

(i) If $K$ is a $(3,6)$-tight subgraph of $G_*$ with $K\cap H = \partial H$ then $\partial H$ is a $3$-cycle graph.

(ii) If $K$ is a $(3,6)$-sparse subgraph of $G_*$ with $f(K)=7$ and $K\cap H = \partial H$ then $\partial H$ is either a $3$-cycle or  $4$-cycle graph.

\end{lemma}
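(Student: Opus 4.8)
The plan is to exploit the additivity of the freedom number under unions. For any two subgraphs $A,B$ of the simple graph $G$, writing $A\cup B$ and $A\cap B$ for the graphs whose vertex (resp.\ edge) sets are the unions (resp.\ intersections) of those of $A$ and $B$, the quantities $3|V(\cdot)|$ and $|E(\cdot)|$ are both additive, giving the inclusion--exclusion identity
\[
f(A\cup B) = f(A) + f(B) - f(A\cap B).
\]
I would apply this with $A=K$ and $B=H$. By hypothesis $K\cap H=\partial H$, so that $f(K\cap H)=f(\partial H)$, and the whole argument reduces to evaluating the three freedom numbers in terms of $\ell:=|\partial H|$, the length of the boundary cycle.

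The next step is these two evaluations. Since $\partial H$ is a simple $\ell$-cycle it has $\ell$ vertices and $\ell$ edges, whence $f(\partial H)=2\ell$. For the triangulated disc $H$ I would compute $f(H)=\ell+3$, by the same device used in the freedom-number computation of Section~\ref{ss:mobius36tight}: coning $\partial H$ to a single new apex vertex turns $H$ into a triangulated sphere, which has freedom number $6$, and this coning adds one vertex and $\ell$ edges, so $6=f(H)+3-\ell$. (Equivalently this follows from Euler's formula for the disc, and it is valid for any boundary length $\ell$, not merely even ones.) Substituting into the identity yields
\[
f(K\cup H) = f(K) + (\ell+3) - 2\ell = f(K) + 3 - \ell.
\]

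Finally I would invoke the $(3,6)$-sparsity of $G$. As $H$ is a triangulated disc we have $|V(H)|\ge 3$, so $K\cup H$ is a subgraph of $G$ on at least three vertices, and sparsity gives $f(K\cup H)\ge 6$, i.e.\ $f(K)+3-\ell\ge 6$. In case (i), $f(K)=6$ forces $\ell\le 3$; since $\partial H$ is a cycle, $\ell\ge 3$, so $\ell=3$ and $\partial H$ is a $3$-cycle. In case (ii), $f(K)=7$ forces $\ell\le 4$, so $\ell\in\{3,4\}$ and $\partial H$ is a $3$- or $4$-cycle graph.

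There is no deep obstacle here: once the additivity identity and the disc formula $f(H)=\ell+3$ are in hand the argument is essentially bookkeeping. The points that warrant a little care are confirming that the disc freedom-number formula holds for arbitrary boundary length; checking that $K\cup H$ is again a \emph{simple} subgraph of $G$, so that the sparsity bound $f(K\cup H)\ge 6$ genuinely applies (this uses simplicity of $G$ and $\ell\ge 3$); and verifying that the hypothesis $K\cap H=\partial H$ does give $f(K\cap H)=2\ell$, i.e.\ that the intersection is exactly the boundary cycle with no additional chords contributed by $K$. Each of these is routine.
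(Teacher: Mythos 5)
Your proof is correct and follows essentially the same route as the paper's: both hinge on the inclusion--exclusion identity $6\le f(K\cup H)=f(K)+f(H)-f(\partial H)$ together with the $(3,6)$-sparsity of $G$. The only cosmetic difference is that you evaluate $f(H)-f(\partial H)=3-\ell$ explicitly and use $\ell\ge 3$ for a cycle, whereas the paper obtains the complementary inequality $f(H)-f(\partial H)\le 0$ from the decomposition $G=H^c\cup H$; the two arguments are interchangeable.
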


\begin{proof}(i) Write $H^c$ for 
the subgraph of $G_*$ which contains the edges of $\partial H$ and the edges of $G_*$ not contained in $H$.
Since $G_* = H^c\cup H$ and 
$H^c\cap H= \partial H$ we have
\[
6=f(G_*) =f(H^c) +f(H)-f(\partial H).
\]
Since $f(H^c)\geq 6$
we have 
$f(H)-f(\partial H)\leq 0$.
On the other hand,
\[
6\leq f(K\cup H) =
f(K)+ f(H) -f(\partial H)
\]
and $f(K)=6$ and so
it follows  that $f(H) -f(\partial H)=0$.

Let $i:D \to G$ be the triangulated disc embedding with $H$ the underlying graph of the surface graph $i(D)$. Since $i$ is injective on the set of interior vertices of $D$ and the set of interior edges of $D$, it follows that 
\[f(H)-f(D) = f(\partial H) - f(\partial D).\]
We deduce that $f(D)-f(\partial D)=0$. We have $f(D) = 6+(s-3)$ when the boundary is an $s$-cycle, while $f(\partial D) = 2s$ and so $s=3$. It follows that $i(D)$ is a 3-cycle graph (even though, in general, $i$ need not be injective on the boundary edges of $D$).

(ii) The argument above leads to  $-1 \leq f(H) -f(\partial H)$ and hence  $-1 \leq f(D) -f(\partial D)$. It follows now that  
$\partial D$ is either a $3$-cycle graph or a $4$-cycle graph. Since $G_*$ is simple it follows that the graph $\partial H = i(\partial D)$ is also
 a $3$-cycle graph or a $4$-cycle graph. 
\end{proof}

%{\color{blue}NEW TEXT} The next lemma exploits the topological nature of a single-holed surface graph for $\P$ to identify the obstacle to edge contraction preserving $(3,6)$-tightness.

We shall also make use of the following topological property of $\P$.

\begin{lemma}\label{l:topologicalproperty}
Let $(B,\lambda)$ be a face graph for $\P$ and let $U\subset \P$ be a connected open set which is the interior of the union of a set embedded faces of $B$. Then one of the following holds. (i) $U$ is an open disc,  (ii) the closure $\overline{U}$ contains a M\"obius strip, (iii) the complementary  open set $\P\backslash \overline{U}$, is not connected.
\end{lemma}

\begin{proof} Suppose that (ii) does not hold. Then every cycle in $B/\lambda$ is contractible. The proof of Lemma \ref{l:technicalLemma} applies and so $U$ is contained in an embedded open disc. Thus either (i) or (iii) must hold. 
\end{proof}

\begin{lemma}\label{l:obstacle1}
Let $G\in\fP_1$, let $e$ be a contractible $FF$ edge in $G$, and let $G'$ be the simple surface graph arising from the contraction move $G \to G'$ associated with $e$.
Then either $G'\in \fP_1$ or 
%the edge  $e$ lies on a critical separating cycle.
$e$ lies on one face of a surface subgraph $G_1$ of $G$, with $G_1\in \fP_1$.
%OLD $e$ lies on the boundary of a subgraph $G_1$ of $G$ where $G_1\in \fP_1$.
\end{lemma}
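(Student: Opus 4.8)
The plan is to analyse the failure of $(3,6)$-sparsity in $G'$ via a subgraph of $G$ on which the contraction destroys three units of freedom, and to show that any such obstructing subgraph can be enlarged to a member of $\fP_1$ whose boundary carries $e$. First I would recall the bookkeeping established in Section~\ref{s:contractionmoves}: the contraction on the contractible $FF$ edge $e=vw$ satisfies $f(G')=f(G)$, so the Maxwell count $f(G')=6$ is automatic and $G'$ fails to lie in $\fP_1$ precisely when it fails to be $(3,6)$-sparse. So suppose $G'\notin\fP_1$; then there is a subgraph $G_0'\subseteq G'$ with at least three vertices and $f(G_0')\le 5$. Since the only local change from $G$ to $G'$ is the merging of $v$ and $w$ and the loss of the edges $aw,bw,vw$ (in the notation of the edge contraction definition), the preimage $G_0\subseteq G$ of $G_0'$ (re-introducing both $v$ and $w$ and the edges of $G$ among them) satisfies $f(G_0)\le f(G_0')+? $; the key counting step is that contraction across $e$ lowers the freedom number by exactly $3$ on any subgraph that contains both facial $3$-cycles $avw,bvw$, whereas on a subgraph meeting the contracted configuration more sparingly it lowers it by less. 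Thus $f(G_0')=5$ forces $f(G_0)$ to be small, and since $G$ itself is $(3,6)$-sparse we are squeezed into $f(G_0)\in\{6,7\}$ with tightly constrained intersection with the two faces.

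The second step is to identify the obstacle as a genuine member of $\fP_1$. From the squeeze above, the critical case is a subgraph $K\subseteq G$ with $f(K)=7$ (or $f(K)=6$) that contains exactly one of the two facial $3$-cycles at $e$ and meets the other only along part of $\partial K$, so that collapsing $e$ reduces $f(K)$ to $5$ or below. Here I would invoke the filling-in Lemma~\ref{l:fillingin}: taking $H$ to be the embedded triangulated disc supplying the ``missing'' face (or faces) across $e$, part~(ii) shows the relevant boundary component is a $3$- or $4$-cycle, and part~(i) pins down the $f=6$ case to a $3$-cycle boundary. Filling $K$ in along these short boundary cycles, using that $G$ itself is a $\P$-graph with its single hole, produces a subgraph $G_1$ that is again a $\P$-graph with one hole and with $f(G_1)=6$, i.e. $G_1\in\fP_1$; and by construction the edge $e$ lies on $\partial G_1$ rather than interior to it, which is exactly what makes the contraction fail on $G_1$ in the manner described in the paragraph preceding the statement.

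The main obstacle I anticipate is the case analysis controlling \emph{how} the obstructing subgraph $G_0'$ sits relative to the contracted vertex and the two faces $avw,bvw$: one must rule out or absorb the possibilities where $G_0'$ contains the merged vertex but neither, or only one, of $a,b$, and where the non-facial structure around $v$ and $w$ creates spurious freedom drops. The clean way to handle this is to pass from an arbitrary violating $G_0'$ to a \emph{maximal} $(3,6)$-tight subgraph of $G'$ containing it (freedom-number submodularity guarantees unions of tight subgraphs are tight, so such a maximum exists), and then work with its $G$-preimage, where the intersection with each face is forced to be either the full $3$-cycle or a single shared edge. The topological input — that $G_1$ inherits a disc-with-one-hole structure when we fill in across a $3$- or $4$-cycle — is where I expect to lean hardest on the embedded-graph framework of Section~\ref{s:terminology}, and it is the step most in need of care rather than the freedom-number arithmetic, which is routine.
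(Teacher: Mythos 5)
Your plan follows the same route as the paper: pass to a maximal subgraph $K$ of $G$ whose contraction violates sparsity, squeeze $f(K)$ into $\{6,7\}$ according to how many of the two facial $3$-cycles at $e$ it contains, and use the filling-in Lemma~\ref{l:fillingin} to control the boundaries of the complementary regions. The freedom-number arithmetic is essentially right in its conclusion, although your intermediate statement has the bookkeeping backwards: contracting $e$ leaves $f$ \emph{unchanged} on a subgraph containing both facial $3$-cycles (one vertex and three edges are lost, so $3-3=0$), and lowers $f$ by $1$ or $2$ on subgraphs containing one or neither face; this is exactly why a violating subgraph can contain at most one of the two faces, which is the case split the paper then runs.

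The genuine gap is the topological step that you explicitly set aside as ``most in need of care.'' It is not a routine verification; it is the heart of the proof. One must show that every connected component $U_i$ of the complement in $\P$ of the embedded $K$ (together with its faces) is an open disc. The paper does this by a trichotomy special to $\P$: each $U_i$ is an open disc, or the interior of a M\"{o}bius strip, or has disconnected complement. The third option is excluded because $K$ is connected and contains the boundary of $U_i$. The second is excluded because the complement of a M\"{o}bius-strip interior in $\P$ is a disc, so $K$ would be a planar triangulated graph, and contraction of a contractible $FF$ edge in such a graph preserves $(3,6)$-sparsity (Gluck's setting), contradicting the choice of $K$. Only then does the filling-in lemma apply to conclude that the boundaries of the $U_i$ not containing the hole are $3$-cycles (resp.\ $4$-cycles in the $f(K)=7$ case), and maximality of $K$ forces all such components to be absorbed, leaving a single component and exhibiting $K$ itself as the one-holed $(3,6)$-tight subgraph $G_1$ with $e$ on its boundary. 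Without the disc/M\"{o}bius/disconnection analysis, your ``filling in along short boundary cycles produces a one-holed $\P$-graph'' is an assertion rather than an argument, and it is precisely the point where a wrong configuration (an obstructing subgraph wrapping nontrivially around $\P$) has to be ruled out.
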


\begin{proof} Assume that $G'\notin \fP_1$. It follows that $G'$ must fail the $(3,6)$-sparsity count.
Thus there exists a subgraph $K$ of the underlying graph $G_*$ of $G$ containing $e$ for which the edge contraction results in a graph $K'$ satisfying $f(K')<6$. 
%We may assume that $K$ is a maximal subgraph with this property. 
Let $e=vw$ and let $c$ and $d$ be the facial $3$-cycles of $G$ which contain $e$. If  both $c$ and $d$ are subgraphs of $K$ then 
$f(K)=f(K')<6$, which contradicts the sparsity count for $G$. Thus $K$ must contain at most one of these facial $3$-cycles.  

\emph{Case 1}. Suppose first that $K$ contains $c$ and not $d$ and is maximal among all subgraphs of $G_*$ which contain the cycle $c$, do not contain $d$, and for which contraction of $e$ results in a simple graph $K'$ with $f(K')<6$.
Note that $f(K)=f(K')+1$ which implies $f(K)= 6$ and $f(K')=5$.
In particular, $K$ is $(3,6)$-tight, and is a connected graph. Also we may view $K$ as a surface graph for $\P$ endowed with the inherited facial structure from $G$.

Let $(B_0,\lambda)$ be a face graph for $G$ with an associated face graph $(B,\lambda)$ for a triangulated surface graph for $S=(V,E,F)$ for $\P$.
% containing $G$. 
In particular $(B,\lambda)$ provides a faithful topological embedding $\pi: S \to \P$.
Let  $X(K)\subset \P$ be the closed set $\pi_E(E(K))$ and let 
$\tilde{X}(K)$ be the union of $X(K)$ and the embeddings of the faces for the facial $3$-cycles belonging to $K$. Finally,  let $U_1, \dots , U_n$ be the maximal connected open sets of the complement of $\tilde{X}(K)$ in $\P$.

 Note that each such connected open set $U_i$ is determined by a set $\U_i$ of embedded faces of $S$ with the  property: each pair of embedded faces of $U_i$ are the endpoints of a path of edge-sharing embedded faces in $\U_i$. From Lemma \ref{l:topologicalproperty}, $U_i$ has one of the following 3 properties. 
\medskip

(i) $U_i$ is an open disc.

(ii) $U_i$ contains a M\"{o}bius strip.

(iii) The complement of $U_i$ is not connected.   
\medskip

\noindent The third property cannot hold 
since the embedding of $K$  is contained in the complement of $U_i$ and contains the boundary of $U_i$, and yet $K$ is a connected graph.
From the second property it follows that $K$ and its facial 3-cycles is embedded in the complement of a  M\"{o}bius strip and this is an open disc.  This is also a contradiction, since 
the edge contraction of a contractible $FF$ edge in a planar triangulated graph preserves $(3,6)$-sparsity. 

%{\color{blue}CHANGES FROM HERE (eg  $U_i$ is determined by a triangulated disc graph in $S$ not $B$! (the disc could pass through the boundary of $B$).}

%OLD Each set $U_i$ is therefore the interior of the closed set determined by an embedding of a triangulated disc graph {\color{blue}in $B$ ???}, say $H(U_i)$. (Indeed, the facial 3-cycles defining $H(U_i)$ are those whose projective plane embedding have interior set contained  in $U_i$.)We may  assume that $U_1$ is the open set that contains the single hole of the embedding of $G$ (in the sense discussed prior to Definition \ref{d:P_k}).

Each set $U_i$ is therefore the interior of the closed set determined by an embedding of a triangulated disc graph in $S$, say $H(U_i)$. Indeed, the facial 3-cycles in $S$ defining $H(U_i)$ are those whose projective plane embedding have interior set contained  in $U_i$.
We may  assume that $U_1$ is the open set that contains the 
single nontriangular face of the embedding of $G$. Thus, if $n=1$ then we may take $G_1$ to be the surface subgraph of $S$ with underlying graph $K$.

Suppose that $i>1$. By the filling in lemma, Lemma \ref{l:fillingin}, it follows  that  $\partial H(U_i)$ is a 3-cycle. 
{Note that no triangulated disc $H(U_i)$ can contain the facial 3-cycle $d$,  for in this case the boundary 3-cycle of $H(U_i)$ contains  $e$ and the contracted graph $K'$ fails to be simple.
By the maximality of $K$ we have $n=1$, since adding the edges and vertices of $S$  interior to these nonfacial 3-cycle boundaries gives a subgraph of $G_*$ with the same freedom count and which does not contain the 3-cycle $d$.} 
Thus, $K$ is the graph of a surface subgraph $G_1$ of $G$ obtained from $S$ by removing the faces of $H(U_1)$ and its interior vertices and edges, and so the proof is complete in this case.
%determined by a hole separating cycle, namely the boundary cycle for $\partial H(U_1)$. This cycle is a critical separating cycle for $G$ and so the proof is complete in this case.
 
\emph{Case 2.}
It remains to consider the case when $K$ contains neither of the facial $3$-cycles $c, d$ which contain $e$. Thus $f(K)=f(K')+2$ and $f(K)$ is 6 or 7.
% and in particular $K$ is connected, as before. 
Once again we assume that $K$ is a maximal subgraph of $G_*$ with respect to these properties and consider the components $U_1, \dots , U_n$ of the complement of $\tilde{X}(K)$.  As before, each set $U_i$ is homeomorphic to a disc and determines an embedded triangulated disc graph $H(U_i)$ in $S$, one of which, say $H(U_1)$, contains the triangulated disc in $S$ associated with the single hole of $G$. If $n=1$ then the proof is complete since we may take $G_1$ to be the surface graph associated with $K$.
On the other hand,
%If $n=1$ then, as in Case 1, the proof is complete.
Lemma \ref{l:fillingin} implies that each boundary of $H(U_i)$, for $i>1$, is a $3$-cycle or a $4$-cycle. As in Case 1, maximality implies that a $3$-cycle boundary is not possible. Consider the 4-cycle boundary of  $H(U_i)$, for some $i>1$, and note first that it cannot contain both $3$-cycles $c$ and $d$, since any edge of $K$, and in particular the edge $e$, belongs to $H(U_i)$ only if it belongs to the boundary cycle of $H(U_i)$.
Suppose then that $H(U_i)$ contains $c$ but not $d$. If $f(K)=6$, rather than $7$, then, with $H(U_i)_*$ the underlying graph of $H(U_i)$ we have $f(K\cup H(U_i)_*)=f(K)+f(H_i(U_i))-f(K\cap H(U_i)_*) =6+7-8=5$, contradicting $(3,6)$-sparsity. It follows that $K\cup H(U_i)_*$ is $(3,6)$-tight and contains $c$ but not $d$. Since this is Case 1 the proof is complete.
\end{proof}

The filling in lemma holds for the surface graphs in $\fP_2, \fP_3$ and we may extend Lemma \ref{l:obstacle1} in the following manner.

\begin{lemma}\label{l:obstacle2}
Let $G\in\fP_k$, for $k=1, 2$ or $3$, let $e$ be a contractible $FF$ edge in $G$, and let $G'$ be the simple surface graph arising from the contraction move $G \to G'$ associated with $e$.
Then either $G'\in \fP_k$ or 
$e$ lies on one face of a surface subgraph $G_1$ of $G$, with 
$G_1\in \fP_l$, for some $1\leq l\leq k$.
%$e$ lies on the boundary of a subgraph $G_1$ of $G$ where $G_1\in \fP_l$, for some $1\leq l\leq k$.
\end{lemma}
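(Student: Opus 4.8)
The plan is to rerun the proof of Lemma \ref{l:obstacle1} almost verbatim, the single new ingredient being the bookkeeping of the $k$ holes of $G$ as they are distributed among the complementary regions of the obstacle subgraph. First I would note that a contraction on a contractible $FF$ edge is a local move, performed inside the two incident facial $3$-cycles and hence disjoint from the holes; thus $G'$ is again a surface graph for $\P$ with exactly $k$ holes, and since contraction preserves the freedom number we have $f(G')=6$. Consequently $G'\notin\fP_k$ can only mean that $G'$ fails $(3,6)$-sparsity, which yields a subgraph $K\subseteq G$ with $e\in K$ whose contraction $K'$ satisfies $f(K')<6$. Writing $c,d$ for the two facial $3$-cycles at $e=vw$, if both lay in $K$ then $f(K)=f(K')<6$ would violate the sparsity of $G$, so $K$ contains at most one of $c,d$, and the two cases of Lemma \ref{l:obstacle1} reappear.

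I would treat Case 1, where $K$ is chosen maximal among subgraphs that contain $c$, omit $d$, and fail the sparsity count after contraction. Exactly as before $f(K)=f(K')+1$, so $f(K)=6$ and $K$ is $(3,6)$-tight and connected. The topological input is unchanged by the number of holes: embedding $K$ in $\P$ via a full triangulation $S\supseteq G$ and letting $U_1,\dots,U_n$ be the maximal connected open regions of the complement, each $U_i$ is an open disc, the interior of a M\"obius strip, or has disconnected complement. The last is impossible because the connected graph $K$ carries $\partial U_i$, and the M\"obius case would force $K$ to be planar, contradicting the fact that $FF$-contraction preserves sparsity in a planar triangulation. Hence every $U_i$ is a disc determining a triangulated disc graph $H(U_i)$ in $S$.

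The genuinely new step is to sort the regions by the holes of $G$. As $K\subseteq G$, every hole of $G$ lies in some $U_i$. Call $U_i$ \emph{hole-free} if it meets no hole of $G$; then $H(U_i)\subseteq G$ is an embedded triangulated disc graph and Lemma \ref{l:fillingin}(i) forces $\partial H(U_i)$ to be a $3$-cycle, whereupon maximality of $K$ absorbs it (adjoining the interior of a nonfacial $3$-cycle leaves $f(K)=6$, and the contains-$c$, omits-$d$, fails-sparsity properties persist). After absorption the surviving regions are exactly those meeting a hole of $G$, and these are precisely the holes of $G_1:=K$. Since each surviving region contains at least one of the $k$ holes of $G$ we get $l\le k$, while $l\ge1$ because $f(G_1)=6$ whereas a full triangulation of $\P$ has $f=3$, so $G_1$ must retain at least one hole. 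Thus $G_1\in\fP_l$ with $1\le l\le k$; and since the component carrying $d$ cannot be absorbed (that would reinstate $d$), it is one of the surviving hole-regions, so that $c\subseteq G_1$ while $e$ sits on $\partial G_1$, as required.

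Case 2, where $K$ omits both $c$ and $d$, runs in parallel: now $f(K)=f(K')+2\in\{6,7\}$, and for the hole-free regions Lemma \ref{l:fillingin}(ii) permits $3$- or $4$-cycle boundaries. The $3$-cycles are absorbed as before; a hole-free $4$-cycle would, by adjoining its diagonal, yield a strictly larger admissible subgraph with $f=6$, contradicting maximality, so only regions meeting a hole of $G$ persist, and once more $G_1\in\fP_l$ with $1\le l\le k$ and $e$ on $\partial G_1$. I expect the only real work to be exactly this hole bookkeeping: checking that each complementary disc either genuinely contains a hole of $G$ (so that its boundary cycle has length at least $4$ and it survives as a hole of $G_1$) or is hole-free and absorbed, and in the $f(K)=7$ situation confirming via the diagonal move that no spurious hole-free region can survive. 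Everything else is identical to Lemma \ref{l:obstacle1}, and, as already observed, the filling in lemma holds verbatim for $\fP_2$ and $\fP_3$.
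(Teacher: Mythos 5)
Your proposal is correct and follows essentially the same route as the paper: the paper's own proof of this lemma simply reruns the argument of Lemma \ref{l:obstacle1} with a maximal obstacle subgraph $K$, shows each complementary component $U_j$ is an open disc, absorbs the hole-free components via the filling in lemma and maximality, and concludes that the surviving components each contain a hole of $G$, giving $K\in\fP_l$ with $l\leq k$. Your additional bookkeeping (the $l\geq 1$ observation and the diagonal move for hole-free $4$-cycle regions in Case 2) is consistent with, and slightly more explicit than, what the paper records.
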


\begin{proof}
%{\color{blue}CHECK AGAIN}
%Suppose first that $k=2$.
The proof for $k=2, 3$ follows the same pattern as in the previous proof for the case $k=1$. Thus we assume that $G' \notin \fP_k$ and consider a subgraph $K$ of $G_*$ subject to two cases. In Case 1 $K$ is maximal among all subgraphs which contain $c$ and not $d$, where $c, d$ are the 3-cycles incident to $e$, and $f(K')=5$. In Case 2 $K$ is maximal among subgraphs of $G$ not containing $c, d$ and for which $f(K')$ is equal to 4 or 5. We consider again the open set which is the complement of the embedding in $\P$ of $K$ and its facial 3-cycles. 
%(The embedding here was  denoted $\tilde{X}(K)$ in the case $k=1$.) 
This open set has connected components $U_1, \dots , U_n$ and each is the interior of a union of an edge-connected set of $\P$-embedded facial 3-cycles of the surface graph $S$ for $\P$. Also the graphs $H(U_j)$ are the associated surface subgraphs of $S$. It follows as before that each $U_j$ is an open disc. %Assume that $k=2$.  

Suppose that $H(U_j)$ does not contain any of the $k=2$ or $3$ triangulated discs which define $G$. In Case 1, by the filling in lemma the boundary of $H(U_j)$ must be a 3-cycle,  and so the proof is completed as before. Indeed, $1\leq n \leq 3$ and each of the surface graphs $H(U_1), \dots , H(U_n)$ contains at least one of the triangulated discs that define $G$ and we may take $G_1$ to be the surface graph determined by $K$.
In Case 2, by the filling in lemma, the boundary of  $H(U_j)$ is a 3-cycle or a 4-cycle. As before, by maximality, the boundary is not a 3-cycle. 
If it is a 4-cycle then either $K\cup H(U_j)$ contradicts the maximality or $H(U_j)$ or $K\cup H(U_j)$ contains one or both of $c, d$.
As in the previous proof, containing both is not possible and so $K\cup H(U_j)$ contains one of $c$ and $d$. Since this is Case 1, the proof is complete.
\end{proof}

We remark that Lemma \ref{l:obstacle2} is analogous to the critical cycle lemma, Lemma 27 of \cite{cru-kit-pow-1}, for $(3,6)$-tight block and hole graphs where, roughly speaking, there is a single block that is  complementary to face graph of a multiconnected surface graph in the plane.
%that is entirely analogous to the surface graph $A$. 
%Also, Lemma \ref{l:finitelyconnectedA} of the next section is closely related to reduction schemes in \cite{cru-kit-pow-1}.

\subsection{Critical embedded cycles}\label{ss:criticalcycles}
Lemma \ref{l:obstacle2} reveals the obstacle to the preservation of $(3,6)$-sparsity when contracting the contractible edge $e$ of a surface graph $G$ in $\fP_k$, namely that $e$ lies on the boundary of a surface subgraph $G_1$ of $G$ which is in $\fP_l$ for some $l\leq k$. 
Let $S$ be a triangulated surface graph for $\P$ that contains $G$, so that $G$ is given by removing the interior vertices, edges and faces of interior disjoint embedded triangulated discs $\pi(D_1), \dots ,\pi(D_k)$. Then $G_1$ is given similarly in terms of interior-disjoint 
embedded triangulated discs $\mu_1(B_1), \dots , \mu_l(B_l)$, where the edge $e$ is in $\mu_1(\partial B_1)$ and $\mu_1(B_1)$ contains one or more of  
$\pi(D_1), \dots , \pi(D_k)$. 
If $c_1$ is the boundary $r$-cycle of $B_1$ then we refer to $c=\mu_1(c_1)$ as an \emph{critical embedded $r$-cycle} or  \emph{critical walk (or $r$-walk)} in $G$, where $|\partial B_1|= r$, with $r=4,5 $ or $6$. We also use this terminology to include the boundary walks $\pi(D_i)$ of the surface graph $G$. A boundary walk is formally a sequence of edges and for simple graphs can  be indicated by specifying the associated sequence of vertices. 

Note that $G_1$ is a vertex-induced surface subgraph of $G$ since it is $(3,6)$-tight.  
Also, considering Maxwell counts, it follows in all cases that 
\[
r-3= |\partial B_1|-3 = \sum_{i\in I}(|\partial D_i|-3) 
\]
where $I=\{i:\pi(D_i)\subseteq \mu_1(B_1)\}$.

An \emph{embedded planar $r$-cycle} in a surface graph $G$ is defined to be the boundary walk of an embedded triangulated disc in $G$.
In particular a critical embedded $r$-cycle of $G\in \fP_k$, for some $r=4,5$ or 6, is not an embedded planar $r$-cycle in $G$. 

\begin{lemma}\label{l:planar3cycle}
Let $G$ be a surface graph in $\fP_k$ for some $k=1,2,3$, with an embedded planar 3-cycle which is not a facial 3-cycle. Then there is a contractible edge $e$ with $G/e$ in $\fP_k$.
\end{lemma}

\begin{proof}
Let $\pi(\partial B)$ be the planar embedded 3-cycle with triangulated disc $B$ with 4 or more vertices. Then there is an edge $e'$ of $B$, not in $\partial B$, for which $B/e'$ is simple and hence the edge $e=\pi(e') $ is a contractible edge. Since a  surface subgraph associated with  a critical walk is an induced subgraph it follows that $e$ cannot lie on a critical walk, and so $G/e$ is in $\fP_k$.
\end{proof}

It is convenient to refer to a 3-cycle in $G$ as an \emph{essential 3-cycle of $G$} if it is not a facial 3-cycle and not an embedded planar 3-cycle. In fact these cycles are the 3-cycles with an associated homotopy class that is nonzero.

%{\color{blue}REMOVE Critical walks may also be directed and this can be done in a consistent way in terms of a chosen directedness for the boundary walks of $G$.}

The following useful lemma enables the creation of new critical embedded cycles by means of subwalk substitutions. Figure \ref{f:blendingCriticals}(i) gives an intuitive sketch of this. The bold ellipses indicate critical walks of lengths $4$ and $5$ in a surface graph $G$ in $\fP_3$ and the boundary walk of the union of the interior of the ellipses is necessarily a critical walk.

%RUBBISH VERSION \begin{lemma}
%\label{l:blendingcriticals}{\color{blue} CORRECTED} 
%Let $G$ be a surface graph in $ \fP_k$, with $1\leq k\leq 3,$ and let $c_a, c_b$ be critical embedded walks in  $G$ with associated embedded discs $\pi(D_a), \pi(D_b)$ and surface subgraphs $G_a, G_b$ in $ \fP_l$, for $1\leq l\leq k,$ which contain a common face. 
%Suppose also that $\pi(D_a), \pi(D_b)$ contain the same embedded discs, amongst  those that define $G$. If the interior of $\pi(D_a\cap D_b)$ (resp. $\pi(D_a\cup D_b)$)  is an embedded open disc then the boundary walk of $\pi(D_a\cap D_b)$ (resp. $\pi(D_a\cup D_b)$) is a critical embedded cycle. 
%\end{lemma}

\begin{lemma}
\label{l:blendingcriticals} 
Let $G$ be a surface graph in $ \fP_k$, with $1\leq k\leq 3,$ and let $c_a, c_b$ be critical walks in  $G$ with associated embedded discs $\pi(D_a), \pi(D_b)$ and surface subgraphs $G_a, G_b$ in $ \fP_l$, for $1\leq l\leq k,$. Suppose that  $G_a\cap G_b$ contains a face and the embedded open discs $\pi(D_a^\circ),  \pi(D_b^\circ)$ have union equal to an embedded open disc. 
Then the boundary walk of $\pi(D_a)\cup \pi(D_b)$ is a critical  walk of the (3,6)-tight surface graph $G_a\cap G_b$.
\end{lemma}

\begin{proof}
We have the freedom count equation
\[
f(G_a\cup G_b)=f(G_a)+f(G_b)-f(G_a\cap G_b)
\]
together with $f(G_a)=6, f(G_b)=6$ and  $f(G_a\cap G_b)\geq 6$.
Thus $f(G_a\cup G_b)=6$ in view of (3,6)-sparsity. Thus $f(G_a\cap G_b)=6$ and so $G_a\cup G_b$ and $G_a\cap G_b$ are $(3,6)$-tight.
The hypotheses ensure that one of the embedded discs that defines $G_a\cap G_b$ is equal to  $\pi(D_a\cup D_b)$ and so $\pi(D_a\cup D_b)$ has a well-defined boundary walk. It follows then that this walk is  a critical embedded cycle in $G$. 
\end{proof}

\begin{center}
\begin{figure}[ht]
\centering
\includegraphics[width=4.5cm]{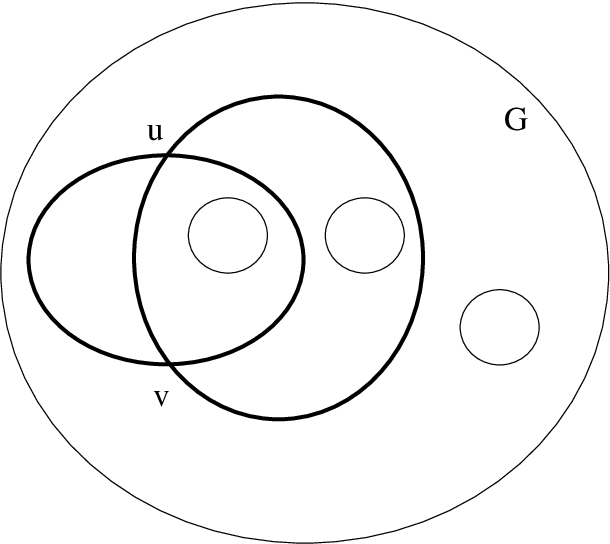}\quad \quad 
\includegraphics[width=4.5cm]{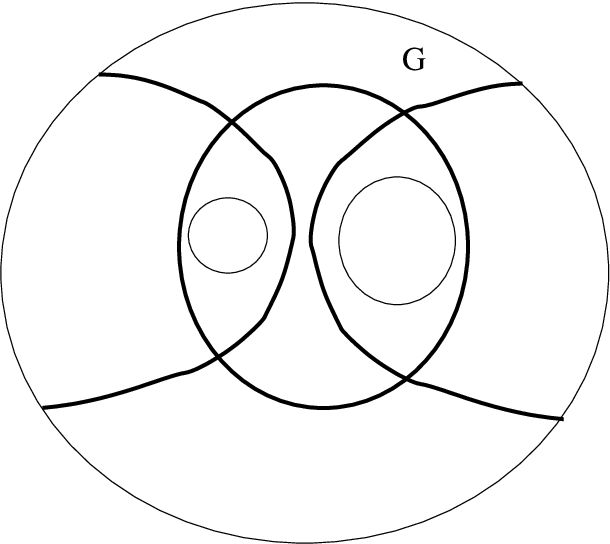}
\caption{Schematic diagrams of embedded critical walks (in bold) in $\P$. The union of the interiors, in $\P$, of the ellipses is homeomorphic to (i) an open disc, (ii) an open M\"{o}bius strip.} 
\label{f:blendingCriticals}
\end{figure}
\end{center}
\begin{center}
\begin{figure}[ht]
\centering
\includegraphics[width=4cm]{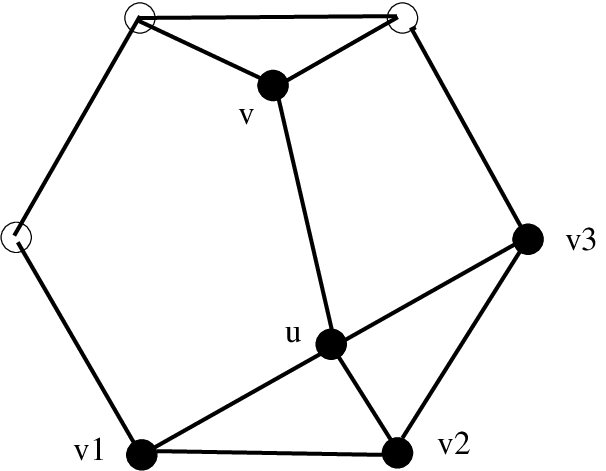}\quad \quad 
\caption{The critical embedded 6-cycle $v_1uv_3v_2v_2$ is not the repetition of an essential 3-cycle.} 
\label{f:2criticalsFailingHypothesis}
\end{figure}
\end{center}

%Let $G$ be a surface graph in $\fP_3$ with two essential 3-cycles.  These 3-cycles give rise to critical embedded 6-cycles, say $c, c'$, with embedded discs $\pi(D), \pi(D')$ which are maximal in the sense that they contain all the faces of the triangulation of $\P$. The interior of $\pi(D_e) \cup \pi(D_f)$ is therefore not an open disc.  

Figure \ref{f:2criticalsFailingHypothesis} gives the modified face graph
$(B_0,\lambda)$ of a surface graph $G$ in $\P_2$ which  illustrates various of critical 6-walks. The closed walk $v_1v_2v_3v_1v_2v_3v_1$ is the repetition of the essential 3-cycle $v_1v_2v_3$ and its associated embedded disc in a containing triangulated surface graph $B/\lambda$ for $\P$ contains all the faces of $\pi(B)$. Other critical walks of this type are associated with the essential 3-cycles $v_1uv_3$, $v_1uv$, $v_2uv$. On the other hand the critical 6-walks $v_1uv_3v_1v_2$ and
$v_1uvv_2uv$ are not repeated 3-cycles.

\section{The irreducibles}\label{s:theirreducibles}

Let $k=1,2$ or $3$. Then a surface graph $G$ in $\fP_k$  admits a reduction sequence
\[
G=G_1 \to G_2 \to \dots \to G_n
\]
where (i) each $G_k$ is in  $\fP_k$, (ii) each move $G_k \to G_{k+1}$ is an edge contraction for an $FF$ edge,  and (iii) $G_n$ is \emph{irreducible} in  $\fP_k$ in the sense that it admits no edge contraction to a surface graph in  $\fP_k$. We show in this section that there are 8 such irreducible surface graphs, denoted  $G_{b,1}, \dots , G_{b,8}$ and also referred to as \emph{base graphs}. They are given in Figure \ref{f:irred_Five} in terms of modified face graphs. 
\begin{center}
\begin{figure}[ht]
\centering
\includegraphics[width=2cm]{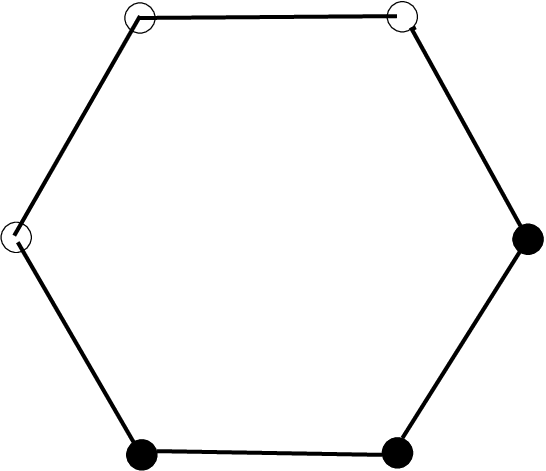}\quad 
\includegraphics[width=2cm]{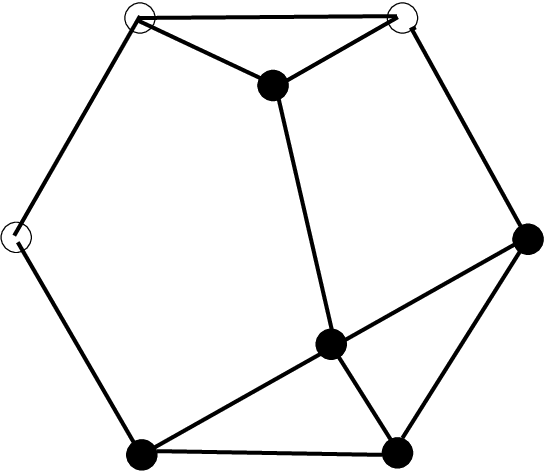}\quad 
\includegraphics[width=2cm]{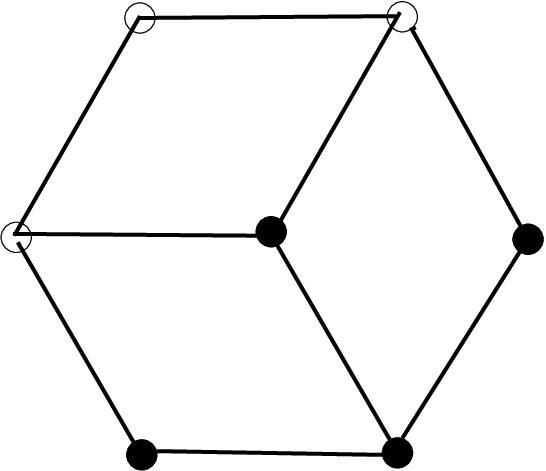}\quad 
\includegraphics[width=2cm]{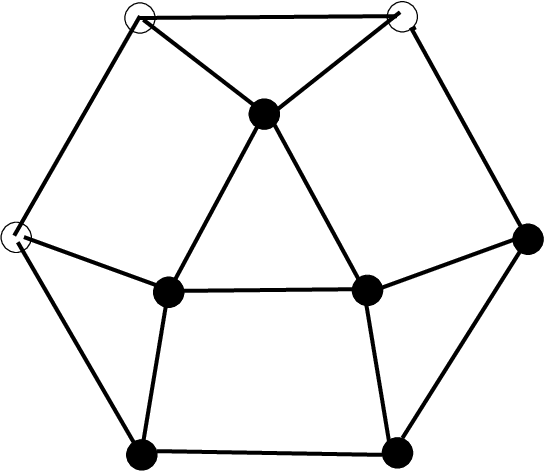}\quad
%\caption{xxx TO DO.} 
%\label{f:irred_Three}
\end{figure}
\end{center}
\begin{center}
\begin{figure}[ht]
\centering

\includegraphics[width=2cm]{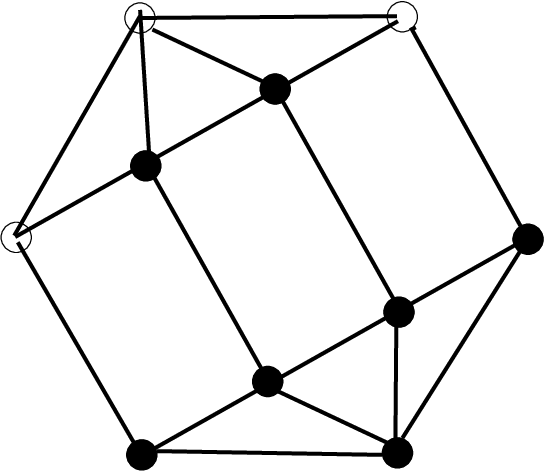}\quad
\includegraphics[width=2cm]{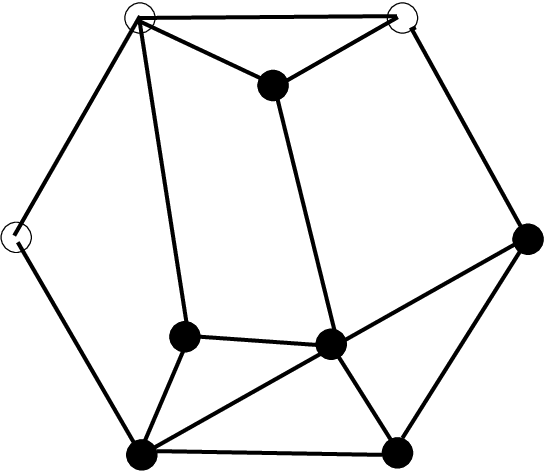}\quad
\includegraphics[width=2cm]{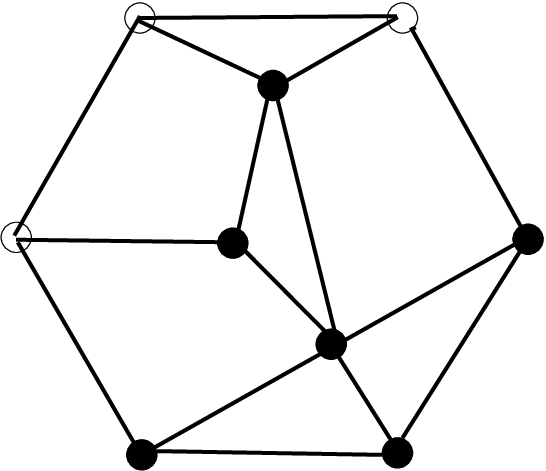}\quad 
\includegraphics[width=2cm]{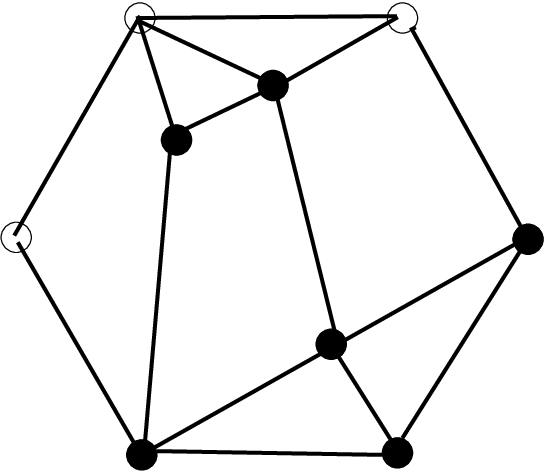}\quad 
\caption{Modified face graphs for the irreducibles $G_{b,1}, \dots , G_{b,8}$.} 
\label{f:irred_Five}
\end{figure}
\end{center}
\begin{center}
\begin{figure}[ht]
\centering 
\includegraphics[width=2cm]{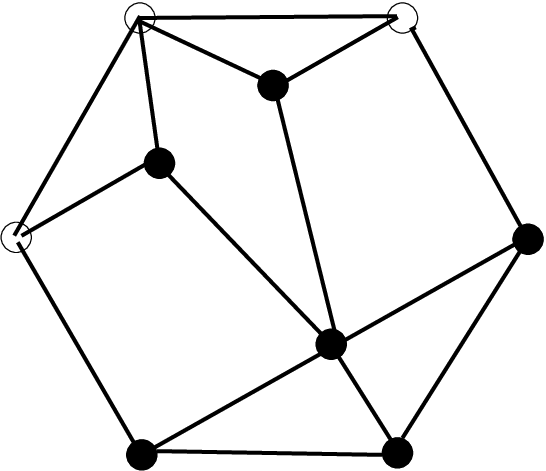}\quad \quad
\includegraphics[width=2cm]{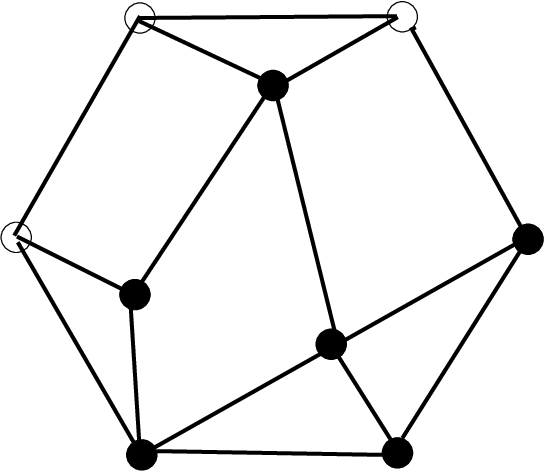}
\caption{Isomorphs of  (i) $G_{b,8}$, (ii) $G_{b,7}$.} 
\label{f:irred_PlusExtra}
\end{figure}
\end{center}

A  surface  graph is \emph{uncontractible} if every $FF$ edge lies on a nonfacial 3-cycle and so
%This notion is evidently well-defined in not depending on any particular embedding of the surface graph. 
an uncontractible surface graph $G$ in $\fP_k$ is certainly irreducible. That the reverse implication also holds is a corollary of the identification of the irreducibles.

Let us also note that the surface graphs $G_{b,6}, G_{b,7}, G_{b,8}$ are augmentations of $G_{b,2}$ by a degree 3 vertex and a facial 3-cycle.  They are nonisomorphic surface graphs by virtue of the fact that they are nonisomorphic as graphs. Figure \ref{f:irred_PlusExtra} shows other representations of  $G_{b,7}$ and $G_{b,8}$ which arise this way.
Also, as graphs we have $G_{b,1}=K_3, G_{b,2}= K_5-e, G_{b,3}=K_4$ and  $G_{b,5}$ is the cone over $K_{3,3}$. The remaining graphs $G_{b,4}, G_{b,6},G_{b,7},G_{b,8}$ are depletions of $K_6$ by  3 edges where these edges 
are (i) disjoint, (ii) form a copy of $K_3$, (iii) have 1 vertex incident to a pair of the edges,  (iv) have 2 vertices incident to a pair of the edges. 
In fact the 8 graphs account for all possible $(3,6)$-tight graphs on $n$ vertices for $n=3, 4, 5, 6$, together with 1 of the 26 such graphs for $n=7$. We remark that for $n=8, 9, 10$ the number of $(3,6)$-tight graphs rises steeply, with values 375, 11495, 613092 (Graseggar \cite{gra}).

%The following lemma shows that an irreducible (3,6)-tight $\P$-graph contains no degree 3 vertex that is incident to an $FF$ edge or lies on a planar nonfacial triangle.  

\begin{lemma}\label{l:degree3lemma} 
Let $G$ be a surface graph in $\fP_k$, with $k=1, 2$ or $3$.
%(i) 
If $e$ is an $FF$ edge that is incident to a degree 3 vertex then
the contraction $G/e$ is in $\fP_k$.
%\quad {\color{blue}[PART (ii) REMOVED - NOT NEEDED]} 
%(ii) If $v$ is an interior vertex of $G$ and $v$ lies on a nonfacial 3-cycle of $G$ that is an embedded planar 3-cycle, then there exists a contractible edge $e$ with $G/e$ in $\fP_k$.
\end{lemma}

\begin{proof} 
%For (i) 
The edge $e$ is contractible, that is, $G/e$ is simple, since it does not lie on a nonfacial 3-cycle. Let $e$ be the edge $uv$ with $\deg(v)=3$ and with facial 3-cycles $uvx$ and $uvy$. If it lies on a critical embedded 4-, 5- or 6-cycle, then the associated surface subgraph fails to be vertex induced. Thus Lemma \ref{l:obstacle2} completes the proof.
%(i) follows.
%\begin{center}
%\begin{figure}[ht]
%\centering
%\input{internal_w.pstex_t}\quad \quad
%\caption{An interior vertex $v$ lying on an embedded planar 3-cycle %$vv_1v_j$.} 
%\label{f:internal_w}
%\end{figure}
%\end{center}
%For (ii), since $v$ is an interior vertex there is an embedded triangulated disc $H\subset G$ induced by the faces incident to $v$, with vertices $v_1,\dots, v_n$ in cyclic order on the boundary of $H$. Considering the hypothesis, and relabelling, we may assume that there is an edge $f=v_1v_j$, with $3\leq j \leq n-1$, so that the edges $v_1v_2, v_2v_3, \dots , v_{j-1}v_j, f$ are the boundary edges of an embedded triangulated disc, $D$ say. Note that if there are no internal vertices to $D$ then one of the vertices $v_2, \dots , v_{j-1}$ has degree 3, and (i) applies. Assume then that  $w$ is an internal vertex with an $FF$ edge $g= wv_i$ with $1\leq i \leq j$. See Figure \ref{f:internal_w}. Since $G$ is simple $g$ is a contractible $FF$ edge and we may assume by way of contradiction that $G/g$ is not in $\fP_k$. By Lemma \ref{l:obstacle2} $g$ lies on a critical embedded $m$-cycle, $c$ say, for $m = 4,5$ or $6$. Also $c$ must include a pair of vertices from the triple $v,v_1,v_j$. In all cases one of the edges $vv_1, vv_j, v_1v_j$ provides a chord to $c$ violating the $(3,6)$-sparsity of its associated surface graph $G_1$.
\end{proof}

\begin{lemma}\label{l:oneholelemma}
 Let $G_1 \subset G$ be  surface graphs in $\fP_1$ determined by embeddings  $\pi(D_1)$ and $\pi(D)$, respectively, of triangulated discs, where $D$ is a proper subset of $D_1$.   
Then $G$ is constructible from $G_1$ by planar vertex splitting moves. 
\end{lemma}

\begin{proof}
% Fix $k$ and $m\leq k$. 
Suppose that $|V(G)|=|V(G_1)|+1.$ 
%Since the given $6$-cycles are critical 
It follows that the vertex of the boundary of $G$ which is not in $G_1$ has degree $3$. By Lemma \ref{l:degree3lemma} $G$ is constructible from $G_1$ by a single planar vertex splitting move. 

Assume next that the lemma is true whenever $|V(G)|=|V(G_1)|+j$, for $j = 1,2,\dots , N-1$, and suppose that $|V(G)|=|V(G_1)|+N$. We claim that the embedded annulus graph $\pi(A)$, where $A$ is defined by removing the interior vertices and edges of $D$ from $D_1$, has an $FF$ edge. To see this let $v_1=\pi(w_1)$ be a vertex of the embedded 6-cycle $c=\pi(c_1)$, where $c_1=w_1w_2\dots w_6$ is the boundary cycle for $D_1$, which is not a vertex of $D$. Since, by $(3,6)$-sparsity, $G_1$ is an induced graph in $G$ there is no edge $\pi(w_2)\pi(w_6)$ in $G_1$. It follows that there is an interior edge $e=v_1v$ in $\pi(A)$ and moreover that $e$ is an $FF$ edge, since $A$ is a (possibly degenerate) triangulated annulus.

If the contraction $G/e$ is in $\fP_1$ then it follows from the induction step assumption that $G$ is constructible from $G_1$ by planar vertex splitting moves. So, by Lemma \ref{l:obstacle1} we may assume  (i), that $e$ lies on a critical embedded $6$-cycle, $c_e$ say, 
%with associated graph $G'$. 
or  (ii), that $e$ lies on a nonfacial 3-cycle of $G$. In the former case
we may substitute a subwalk of the critical embedded 6-cycle $c_b=\pi(\partial D_1)$ by a subwalk of $c_e$ of the same length which is interior to $\pi(\partial D_1)$ and has the same initial and final vertices and where these subwalks form the boundary of an embedded triangulated disc in $\pi(D_1\backslash D)$.
% from the subwalk substitution lemma, Lemma \ref{l:blendingcriticals}, applied to $c_e$ and the critical cycle, $c_b$ say, associated with $\partial G$, we obtain a 
The resulting critical embedded 6-cycle $c'$ lies strictly inside $c_b$. If $G_1'\in \fP_1$ is its associated surface graph then we have
%?? we may take $G_1'$ to be the union of $G_1$ and $G'$.{\color{red}to edit because this is not quite right since in principle the two embedded discs in $\P$ could have union a Mobius strip, -  one needs subwalk substitution to get "the right bits of G' added to G"} Then $G_1'$ is also in $\fP_1$. Since
  $|V(G)|-|V(G_1')|<N$ and  $|V(G_1')|-|V(G_1)|<N$, and it follows from the  induction step that the lemma holds for $G$ and $G_1$. 
 
%{\color{blue}SIMPLER FINISH} 
%AVOIDING Lemma \ref{l:degree3lemma}(ii) [CHECK IF IT IS NEEDED IN THE PAPER] 
We may assume then that (ii) holds. 
%and that no $FF$ edge of $\pi(A)$ lies on a critical walk. 
Since $e$ is an $FF$ edge in $\pi(A)$ it follows that any nonfacial 3-cycle containing $e$ is embedded planar 3-cycle in $G$ of the form $\pi(\partial B)$ for an embedded disc $\pi(B)$ with $B$ contained in $A$. By Lemma \ref{l:planar3cycle} there is a contractible edge $f$ with $G/f$ in $\fP_1$ and so the induction step follows in this case also.
 %OLD TEXT INADEQUATE We may assume then that (ii) holds, and moreover, in view of Lemma \ref{l:degree3lemma}(ii), that the interior edge $e$ lies on a nonplanar nonfacial 3-cycle {\color{blue}interior vertex ?.. sort}. Such a nonplanar 3-cycle must include 2 vertices of $\partial G_1$ and so once again the cycle determines an intermediate graph $G_1'$ and the induction step assumption ensures that the lemma is true for $G$.
\end{proof}

The previous lemma shows that $G_{b,1}$ is the unique irreducible surface graph in $\fP_1$. It is the surface graph given by the graph $K_3$ together with an empty facial structure. 
For the surface graphs $G$ in $\fP_2, \fP_3$ the reduction arguments are more involved since, the embedded disc associated with a critical $s$-walk, for $s=5$ or $6$ can contain more than one boundary walk of $G$. However, the next lemma  implies that $G$ is not irreducible if there is a critical walk that properly contains a critical walk of the same length, and we use this corollary frequently.

\begin{lemma}\label{l:mholelemma}
 Let $r=2$ or 3 and let $G_1 \subset G$ be surface graphs in $\fP_r$ determined by embedded triangulated discs  $\pi(B_i), 1\leq i\leq r,$ and $\pi(D_i), 1\leq i\leq r,$, respectively, where $D_i$ is a proper subset of $B_i$ for each $i$.   
Then $G$ is constructible from $G_1$ by planar vertex splitting moves. 
\end{lemma}

\begin{proof} We may argue by induction as in the previous lemma.
Suppose for example that $r=2$ and that $e$ is an $FF$ edge of $G\backslash G_1$ and that $e$ is the embedding of an edge in $B_1\backslash D_1$ where the boundary cycles of $B_1, D_1$ are of length 5. 
The induction step can be completed if $G/e$ is in $\fP_2$ and so we may assume that either (i), $e$ is on a critical embedded $s$-cycle $c_e$, for $s=5$ or 6, or (ii),  $e$ lies on a nonfacial 3-cycle. If (i) holds with $r=6$ then we may apply Lemma \ref{l:blendingcriticals} to see that $e$ also lies on a critical embedded 5-cycle $c_e'$ which contains the embedding of $B_1$, in the usual sense.
Thus $c_e'$ determines an intermediate surface subgraph in $\fP_2$, and so we may complete the induction step as in the previous proof. If (i) holds then, as before, we similarly obtain an intermediate surface subgraph in $\fP_2$. For the case $r=3$ Lemma \ref{l:blendingcriticals} can again be used to obtain an intermediate surface graph in $\fP_3$ and so in all cases the induction step may be completed.
\end{proof}

\begin{cor}\label{c:propercontainment}
 Let $r=2$ or 3 and let $G_1 \subset G$ be  determined by embedded triangulated discs  $\pi(B_i), 1\leq i\leq r,$ and $\pi(D_i), 1\leq i\leq r,$, respectively. If $D_1$ is a proper subset of $B_1$ and $D_i=B_i$ for $i\neq 1$ then $G$ is not irreducible. 
In particular, if $G$ is irreducible then it is not possible for $G$ to have a critical embedded 4-cycle containing an $FF$ edge.
\end{cor}

We next determine the surface graphs which contain no $FF$ edges. From the definition these are necessarily irreducible. The next three propositions, together with our remarks following Lemma \ref{l:obstacle1}, show that they are $G_{b,1}$ in $\fP_1$ and $G_{b,3}, G_{b,4}$ in $\fP_3$.

\begin{prop}\label{p:noFacesB}
Let $G$ be a surface graph in $\fP_k$, for $k=2$ or 3. If $G$ has  no facial 3-cycles then $k=3$ and $G = G_{b,3}$.
\end{prop}

\begin{proof} Let $(B_0,\lambda)$ be a modified face graph representation for $G$ with boundary a $2r$-cycle. Note that for $k=2$ or 3 each vertex of $G$ that is not on the embedded $2r$-cycle has degree 3. In particular removing this vertex and its incident edges does not change the Maxwell count of 6. It follows, by removing all interior vertices that $r$ must be equal to 3 and the conclusion follows.
\end{proof}

\begin{prop}\label{p:noFacesB}
Every surface graph in $\fP_2$ has $FF$ edges.
\end{prop}

\begin{proof} Let $G$ in $\fP_2$ have a modified face graph representation $(B_0,\lambda)$ with outer boundary cycle of length $2r$. We may assume from the previous proposition that $G$ has faces. Suppose, by way of contradiction, that $B_0$ has 2 holes and $G$ has no $FF$ edges. Then there is an edge $xy$ in the boundary $2r$-cycle of $B_0$ which has a face $xyv$. The paired edge $x'y'$ therefore belongs to the boundary cycle of one of the holes of $B_0$. See Figure \ref{f:2holesNoFF} where $D_1$ indicates this hole. Note that it is not possible for $D_1$ to be incident to $x$ or $y$. Indeed, if $D_1$ has a 5-cycle boundary then since the edges $xy'$ does not exist, by the simplicity of $G$, the boundary walk of $D_1$ from $x$ to $x'$ has length 1 or 2, contradicting the simplicity of $G$. Since there are no $FF$ edges the other hole boundary must contain the edges $xv$ and $yv$, and this is a contradiction since it implies that $v$ has degree 2.
\begin{center}
\begin{figure}[ht]
\centering
\includegraphics[width=3.5cm]{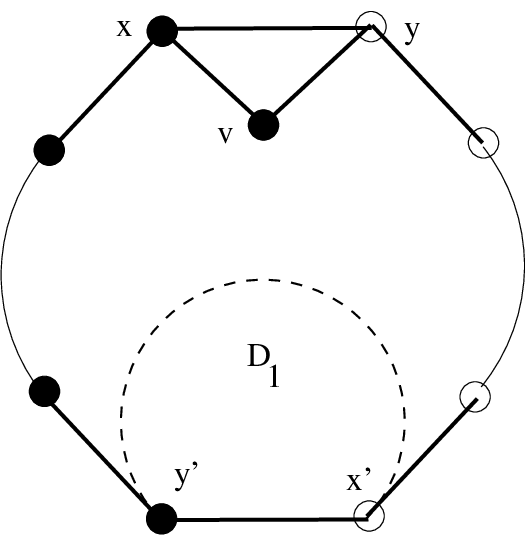} 
\caption{$(B_0,\lambda)$ with a face and no $FF$ edges.} 
\label{f:2holesNoFF}
\end{figure}
\end{center}
\end{proof}

\begin{prop}\label{p:faceNoFF}
Let $G$ be a surface graph in $\fP_3$ with facial 3-cycles and no $FF$ edges. Then $G = G_{b,4}$.
\end{prop}

\begin{proof}
Let $(B_0,\lambda)$ be a modified face graph representation for $G$ with boundary a $2r$-cycle. We show that $r$ can be $3, 4$ or 5 and in all cases the surface graph $G= B_0/\lambda$ is isomorphic to $G_{b,4}$.

\begin{center}
\begin{figure}[ht]
\centering
\includegraphics[width=4cm]{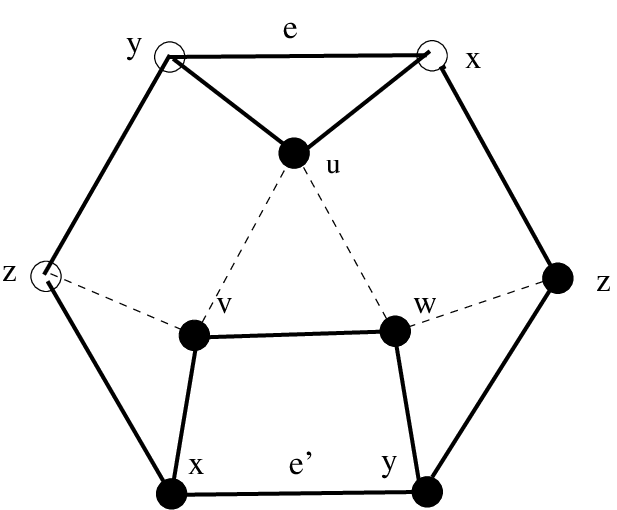} \quad \quad
\includegraphics[width=4cm]{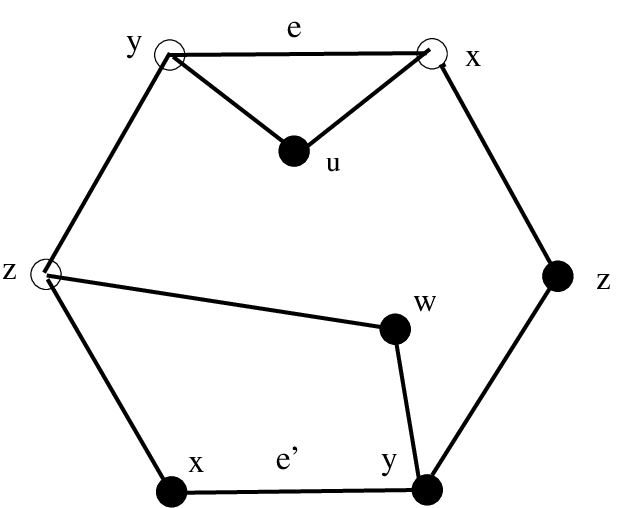}
\caption{Possible subgraphs of $(B_0,\lambda)$ when $r=3$ (shown with bold edges).} 
\label{f:case_ris3}
\end{figure}
\end{center}
We may assume that $B_0$ has a boundary edge $e=xy$ on a facial 3-cycle. Let $e'$ be the edge in the boundary of $B_0$ which is identified with $e$ by $\lambda$. Suppose first that $r=3$. By our assumptions $e'$ belongs to the boundary 4-cycle of a hole of $B_0$. It follows that $B_0$ contains a surface subgraph with one of the forms given in Figure \ref{f:case_ris3}.

In the first case the boundary 4-cycle is $v,w,y,x$ with vertices $v, w \neq z$.
The remaining two boundary 4-cycles of $B_0$ contain the edges $uy$ and $ux$ respectively, and neither 4-cycle contains both edges. Since there are no $FF$ edges it follows that $G=G_{b,4}$. In the second case, with  $z=v$, we have $u \neq w$, by the simplicity of $G$. Since there are no $FF$ edges it follows that $ywz$ is a face since otherwise the boundary 4-cycle containing $ux$ must be incident to $y$, contradicting simplicity. But the presence of the face $ywz$ also  contradicts the simplicity of $G$, and so the second case does not arise.
\begin{center}
\begin{figure}[ht]
\centering
\includegraphics[width=3.5cm]{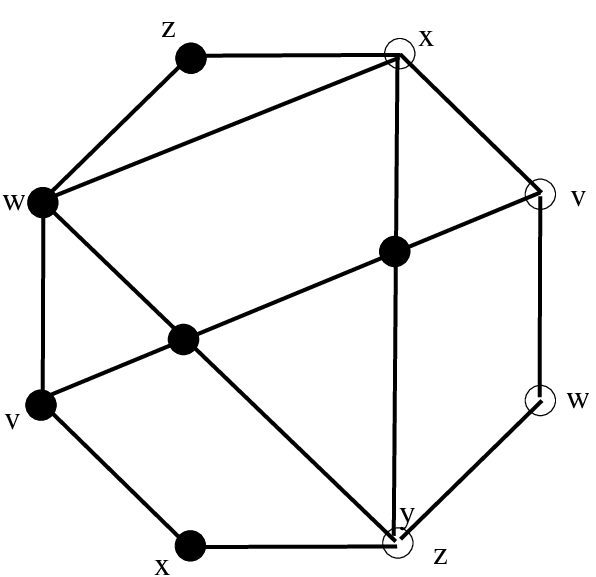} \quad \quad
\includegraphics[width=4cm]{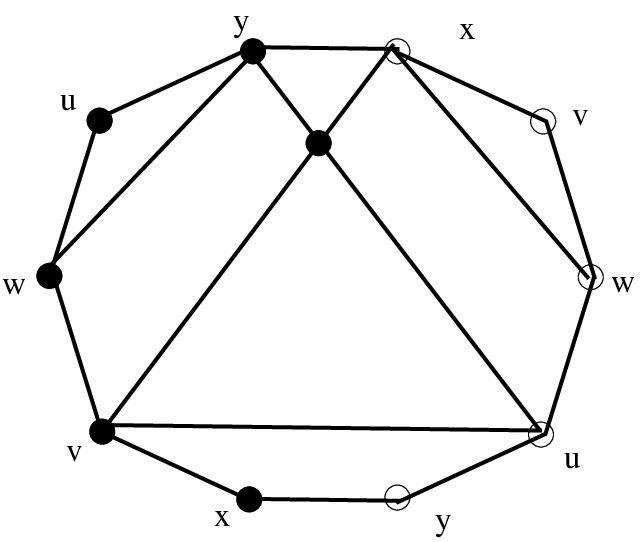}
\caption{Some modified face graph representations for $G_{b,4}$.} 
\label{f:OctagonEtc}
\end{figure}
\end{center}

One can argue in the same way when $r=4$ and when $r=5$ to obtain the forms  of $B_0$ indicated in Figure \ref{f:OctagonEtc}. Their associated surface graphs are isomorphic to $G_{b,4}$, completing the proof.
\end{proof}

The following general construction of a critical embedded 6-cycle $c_g$ with
a maximality property will be used in the next two proofs. We refer to it in the proofs as the \emph{maximal critical walk construction}.
%an associated embedded disc with a maximal property will also be useful in the proof of Proposition \ref{p:444irreducible}.

Suppose that $G$ in $\fP_2$ or $\fP_3$ is irreducible and contains a critical $6$-walk $c_e$ with an $FF$ edge $e$. Let $G=G_e\cup A_e$ be the associated decomposition where $c_e$ is the boundary walk $\pi(\partial D_e)$ of an embedded triangulated disc $\pi(D_e)$ in a fixed triangulated surface graph $S$ for $\P$. If $G_e$ is equal to the irreducible $G_{b,1}$ then the construction stops. Otherwise $G_e$ has an $FF$ edge $f$ and we continue.
The edge $f$ must lie on a critical embedded cycle $c_f$ of $G$ which in turn is the boundary walk $\pi(\partial D_f)$ of an embedded triangulated disc $\pi(D_f)$. If the interior of the embedding of $D_e^\circ \cup D_f^\circ)$ is an open disc then Lemma \ref{l:blendingcriticals} applies and one obtains a critical  6-walk with embedded triangulated disc $\pi(D_e\cup D_f)$. Continuing, the construction process stops, either with $G_g= G_{b,1}$ and $c_g$ a repeated essential 3-cycle, or with $c_g$  a critical  6-walk $c_g$ with $FF$ edge $g$ and associated decomposition $G=G_g \cup A_g$  with the following properties:  (i) There is a face in $G_g$, and (ii) if $c_h$ is a critical walk through an $FF$ edge $h$ of $G_g$ then the interior of $\pi(D_g)\cup \pi(D_h)$ is not an open disc. This means that $c_h$ separates holes in the manner shown in Figure \ref{f:2criticalsFailingHypothesis}(ii).

\begin{prop}\label{p:2holesIrred}
Let  $G$ be an irreducible surface graph in $ \fP_2$. Then 
$G=G_{b,2}$.
\end{prop}

\begin{proof}
By Proposition \ref{p:noFacesB} $G$ contains $FF$ edges.
Also, by Corollary \ref{c:propercontainment} $G$ contains no $FF$ edges on critical walks of length 4. Moreover, since $G$ is in  $ \fP_2$ it follows from Lemma \ref{l:mholelemma} that no $FF$ edge lies on a critical walk of length 5. Thus there is an $FF$ edge $e$ on a critical embedded 6-cycle and the maximal critical walk construction applies to give the critical walk $c_g$. 

We consider first the second outcome of the construction. Since $G_g$ is not equal to $G_{b,1}$ it is reducible with an $FF$ edge $h$ with $G_g/h$ in $\fP_1$. By the irreducibility of $G$ and the construction of $G_g$ the edge $h$ lies on a critical embedded 6-cycle $c_h$ for which the open set $\pi(D_g^\circ)\cup \pi(D_h^\circ)$ is not an open disc. The possibilities for this are limited as we now show.

The walk $c_h$, with length 6, necessarily has two subwalks that are interior to $c_g$ (in the usual sense) one with length 2 and the other of length 2 or 3.
Suppose first that both subwalks are of length 2. 
% It follows that $c_h$ has a number of possible forms relative to $c_g$. 
Then $c_h$ could be a repeated essential 3-cycle with two edges interior to $c_g$, as illustrated in Figure \ref{f:2holesPossiblesBandA}(i) and (ii). Moreover, further forms 
%for $c_h$ are possible in this case, 
are only possible by concatenating certain pairs of essential 3-cycles which share a vertex. So we may  assume that $h$ lies on an essential 3-cycle as shown in Figure \ref{f:2holesPossiblesBandA}(i) or in Figure \ref{f:2holesPossiblesBandA}(ii).
\begin{center}
\begin{figure}[ht]
\centering
\includegraphics[width=4cm]{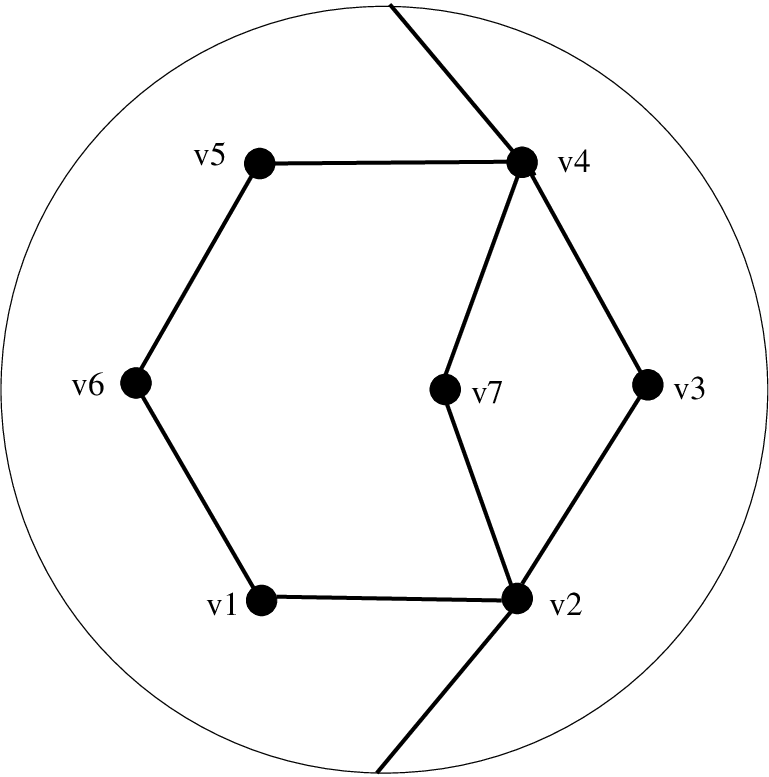}\quad \quad
\includegraphics[width=4cm]{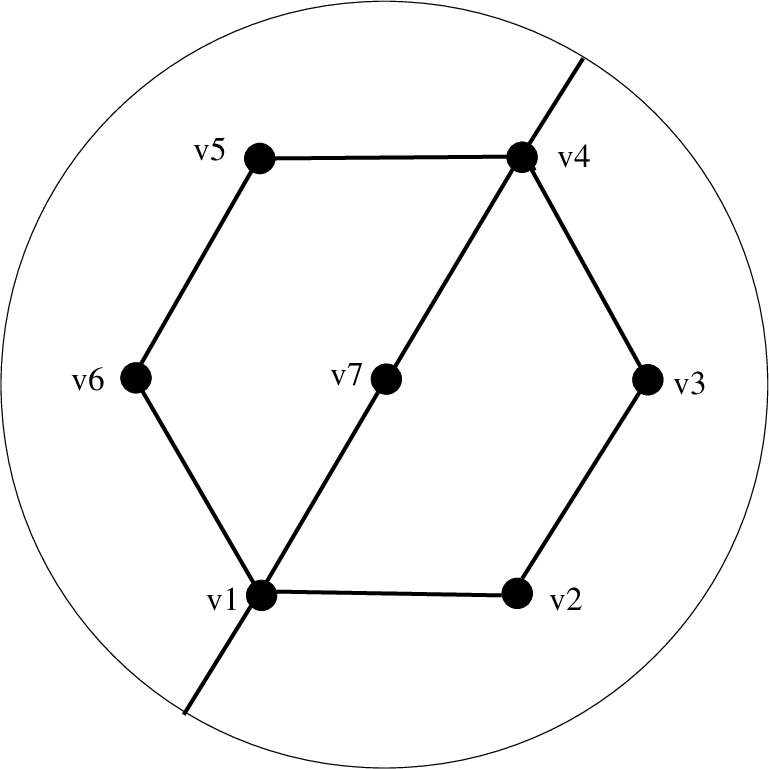}  
\caption{Two forms of an essential 3-cycle with 2 edges interior to the embedded $6$-cycle $c_g$.} 
\label{f:2holesPossiblesBandA}
\end{figure}
\end{center}
%{\color{red}new, clearer:} 
In the first case note that if there is an additional $FF$ edge of $G_g$, with faces in $G_g$, that is incident to a vertex of $c_g$ then its critical embedded 6-cycle must contain a ``diagonal" essential 3-cycle, as in the second case. So we may assume that $G_g$ only has the edge $v_2v_4$ as an $FF$ edge. However, this implies that $v_2v_4v_3$ is a facial 3-cycle, a contradiction.

In the second case, illustrated by  Figure \ref{f:2holesPossiblesBandA}(ii), we may assume, by Corollary \ref{c:propercontainment} and symmetry, that the boundary of the 5-cycle hole is $v_4v_5v_6v_1v_7$. Note that the 4-cycle for $c_1$ cannot be incident to both $v_1$ and $v_4$.  Thus, if neither $v_7v_2$ or $v_7v_3$ exists then there would be an edge $v_7w$ with $w$ strictly interior to the 5-cycle $c=v_1v_2v_3v_4v_7$. Also $v_7w$ would not be an $FF$ edge since it cannot lie on a critical walk. So, without loss of generality we can assume that $c_1$ is the walk $v_7wv_3v_4$. It follows that there is an $FF$ edge $v_7w'$ with $w'$ interior to is an $FF$ edge. However, it fails to lie on a critical walk, a contradiction.

%{\color{red}Clearer} 
In the second case then we may assume that $v_7v_2 $ exists.   
Thus $G$ contains the subgraph shown in the $\P$-diagram
of Figure \ref{f:2holesBasicC}, and $G$ is obtained by adding edges and vertices that are exterior to $c_g$.
Consider an $FF$ edge of $G_g$ (with faces in $G_g$) which is incident to $v_3, v_5$ or $v_6$. This edge must lie on a critical walk separating the holes of $G$ in the manner of Figure \ref{f:2criticalsFailingHypothesis}(ii) and this is not possible in each case. It follows that $v_6=v_3$ and that $v_5=v_2$. Thus the edges $v_4v_2$ and $v_4v_6$ exist and so $G=G_{b,2}$.
\begin{center}
\begin{figure}[ht]
\centering
\includegraphics[width=4cm]{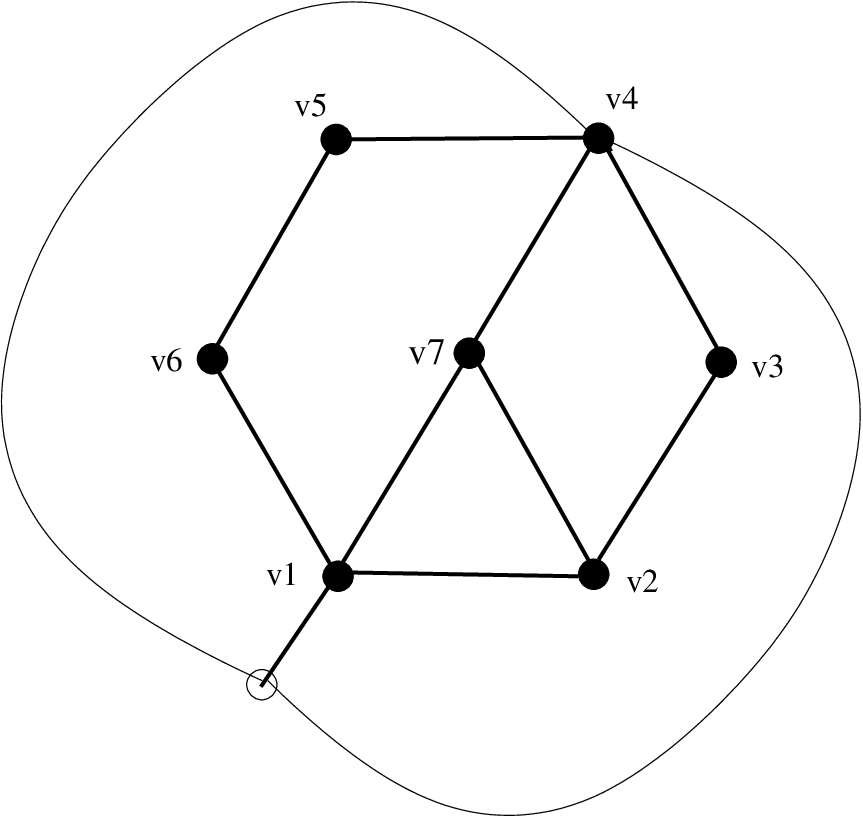}  
\caption{A surface subgraph of $G$ when the essential 3-cycle has 2 edges interior to $c_g$.} 
\label{f:2holesBasicC}
\end{figure}
\end{center}

Suppose next that the subwalks of $c_h$ have lengths 2 and 3 and $h$ does not lie on an essential 3-cycle. Then $c_h$ has the form as shown in Figure \ref{f:2holesBasicC_23case}, for a relabelling of $v_1,\dots v_6$, where $w$ is distinct from $u, v$ and where the walk $v_1, v_2,w,v_4,v_5,v,u$ is the boundary walk of an embedded triangulated disc in $G$.
By  Corollary \ref{c:propercontainment} the cycles  $v_2wv_4v_3$ and $v_1uvv_5v_6$ correspond to the boundary walks of the holes of $G$. It follows that there is an $FF$ edge incident to $u$ or to $v$. Thus does not lie on a critical embedded cycle and so this case does not arise. 
\begin{center}
\begin{figure}[ht]
\centering
\includegraphics[width=4cm]{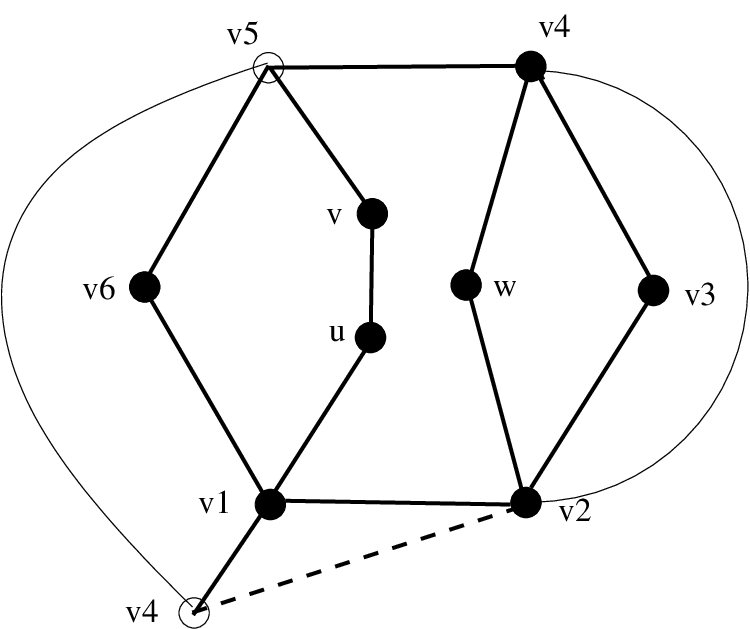}  
\caption{A surface subgraph of $G$ with an embedded critical 6-cycle 
$v_1uvv_5wv_4$.} 
\label{f:2holesBasicC_23case}
\end{figure}
\end{center}
\begin{center}
\begin{figure}[ht]
\centering
\includegraphics[width=4cm]{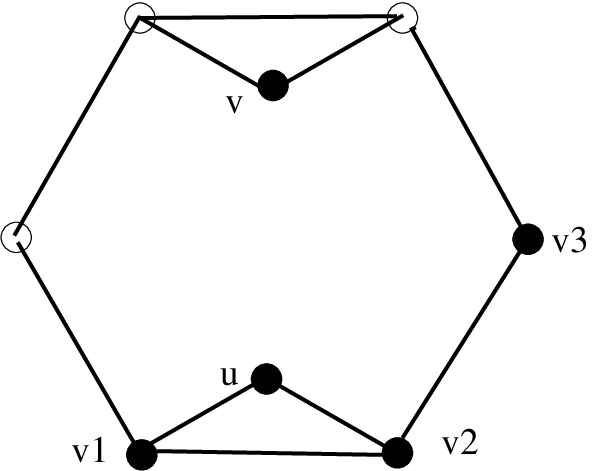}  \quad \quad
\includegraphics[width=4cm]{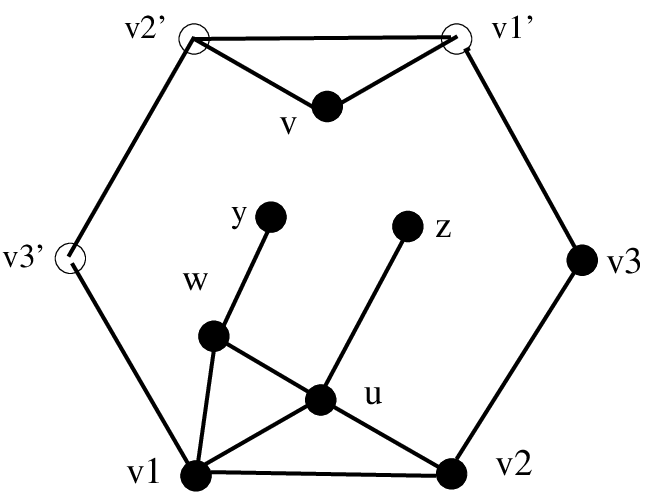}  
\caption{(i) A face graph for $G_g=G_{b,1}$ with added faces $v_1v_2u, v_1v_2v$ where $g=v_1v_2$.  (ii) Implied edges when $v_1$ is not incident to a hole of the face graph $(B_0,\lambda)$ for $G$.} 
\label{f:2holes3cycleCase}
\end{figure}
\end{center} 

Suppose, finally, that the maximal critical walk construction stops with $G_g=G_{b,1}$. Then there is a modified face graph $(B_0,\lambda)$ for $G$ in which the outer boundary of $B_0$ embeds as $c_g$, with both faces of the $FF$ edge $g$ interior to $c_g$. This is illustrated in Figure \ref{f:2holes3cycleCase}(i) where $g=v_1v_2$. If the edge $uv_3$ exists then $uv_2$ is an $FF$ edge and $v_1uv_3v_1v_2v_3$ is a critical embedded 6-cycle, $c$ say. With the pair $uv_2, c$ playing the roles of $g, c_g$ above it follows from our previous arguments that $G=G_{b,2}$. 

We may assume then that none of the edges $uv_3,uv_3',vv_3,vv_3'$ exist.
Since there are 2 holes in the modified face graph $(B_0,\lambda)$ for $G$ it follows that at least one of the vertices $v_1,v_2,v_1',v_2'$ is not incident to either hole of $B_0$. For if not then there is an embedded planar 5-cycle containing both holes of $G$. By symmetry we assume that $v_1$ is such a vertex. 
It follows that there is a face  $v_1uw$ with $w$ interior to $c_g$. Both $v_1w$ and $v_1u$ are $FF$ edges and so lie on a critical embedded 6-cycles, $c_w, c_u$ say, since $G$ is in $\fP_2$. Edges $v_1w, wy$ (resp. $v_1u, uz$) are included in a subwalk of $c_w$ (resp. $c_u$) from $v_1$ to another vertex of $\partial B_0$. See Figure \ref{f:2holes3cycleCase}(ii). We show that in all cases there exists a reducible edge, that is, an $FF$ edge that is not on a critical walk.

If $y=z$ then $uw$ is $FF$ and so lies on a critical walk. The only possibility for this is that $wv_2'$ exists. 
Thus the interior of the 4-cycle $v_1wv_2'v_3'$ is triangulated with an edge incident to $w$, and this edge is reducible. If $y=v_2'$ then, for the same reason, there is a reducible edge. If $y=v$ then $zv_1'$ must exist and there are then three 5-cycle faces one of which is triangulated, and in each case there is a reducible edge. The same conclusion holds if the edge $yv_1'$ exists. Thus the critical 6-cycle $c_g$ cannot exist and the proof is complete.  
\end{proof}

The next proof is similar in style to the previous proof. However this time the outcome $G_g=G_{b,1}$ of the maximal critical walk construction leads to the  identification of an additional irreducible.

\begin{prop}\label{p:444irreducible} 
Let $G$ be an irreducible surface graph in $\fP_3$ with $FF$ edges.
Then $G$ is one of the base graphs $G_{b,5}, \dots , G_{b,8}$.
\end{prop}

\begin{proof} 
We consider $G$ to be determined by a triangulated surface graph for $\P$ and 3 interior-disjoint embedded triangulated discs $\pi(D_1), \pi(D_2), \pi(D_3)$ with boundary walks $c_1, c_2, c_3$ which are critical embedded 4-cycles. By Corollary \ref{c:propercontainment} there are no critical walks of length 4.

Suppose that there is an $FF$ edge on a critical embedded 6-cycle and that
$G=G_g \cup A_g$ is the decomposition given by the maximal critical walk construction. Consider first the outcome $G_g\neq G_{b,1}$. Then $G_g$ contains an $FF$ edge $h$, with both faces in $G_g$. This edge must lie on a critical walk $c_h$ in $G$ and, by the maximal property of $G_g$  there are 2 subwalks of $c_h$ each with at least 2 edges interior to $c_g$. 
%[MEMO $c_h$ LENGTH 5 possible] 
\begin{center}
\begin{figure}[ht]
\centering
\includegraphics[width=4cm]{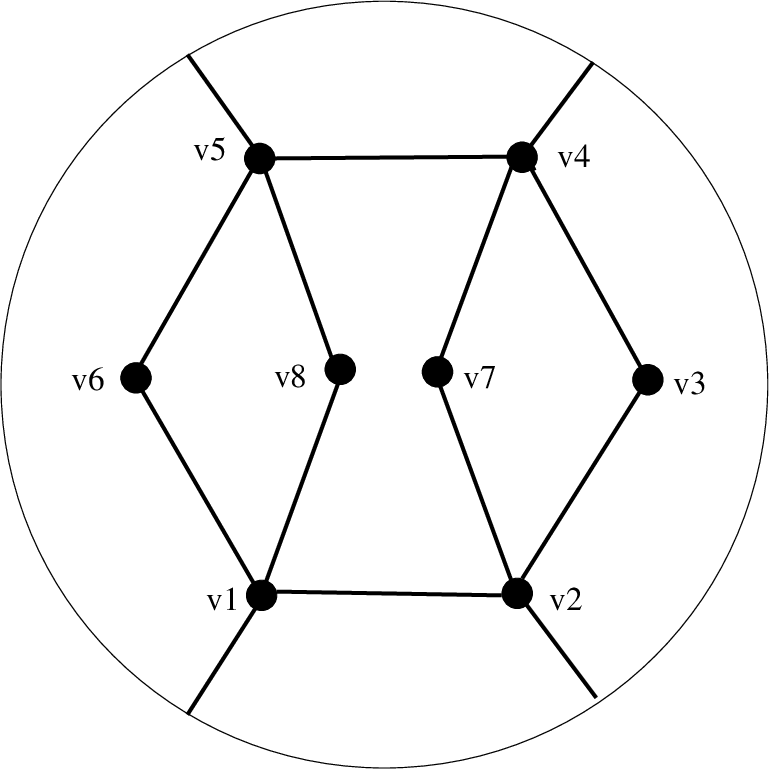}  
\caption{An additional form for $c_h$.} 
\label{f:3holes_6cycle_ch}
\end{figure}
\end{center}

 We consider the case that $|c_h|=6$ and both subwalks are of length 2. Then in addition to the forms implied by Figure \ref{f:2holesPossiblesBandA}, as repeated  or concatenated essential 3-cycles, we also have the form shown in Figure \ref{f:3holes_6cycle_ch} where the subwalks have no coincident vertices and $v_7\neq v_8$. However, by Corollary \ref{c:propercontainment}  the 4-cycles $v_1v_8v_5v_6$ and $v_2v_3v_4v_7$ are boundary cycles determining $c_1$ and $c_2$ say, while the interior of the 6-cycle $v_1v_2v_7v_4v_5v_8$ is partially triangulated in $G$ with a single 4-cycle face associated with $c_3$. In particular the planar embedded 6-cycle $c_h$ cannot be critical and so this case does not arise.

We may assume then that $h=v_1v_4$  lies on an essential 3-cycle $v_1v_7v_4$ as shown in Figure \ref{f:2holesPossiblesBandA}(ii). We may assume also that the cycle $c_1$ is contained in the right hand side 5-cycle, $c=v_1v_2v_3v_4v_7$, and $c_2, c_3$ are contained in the other 5-cycle (in the usual inclusion sense). The argument in the previous proof applies, even though critical walks of length 5 might now be present, as we now show. 
The 4-cycle for $c_1$ cannot be incident to both $v_1$ and $v_4$ and so if neither $v_7v_2$ or $v_7v_3$ exists then there would be an edge $v_7w$ with $w$ strictly interior to  $c=v_1v_2v_3v_4v_7$. Also $v_7w$ cannot be an $FF$ edge since it does not lie on a critical walk of length 5 or 6. So we can assume that $c_1$ is the walk $v_7wv_3v_4$. It follows that there is an $FF$ edge $v_7w'$ with $w'$ interior to is an $FF$ edge. However, it fails to lie on a critical walk of length 5 or 6 and this contradiction shows that we may assume that $v_7v_2$ exists, so that $G$ contains the surface subgraph of Figure \ref{f:3holesBasicC}.
\begin{center}
\begin{figure}[ht]
\centering
\includegraphics[width=4.5cm]{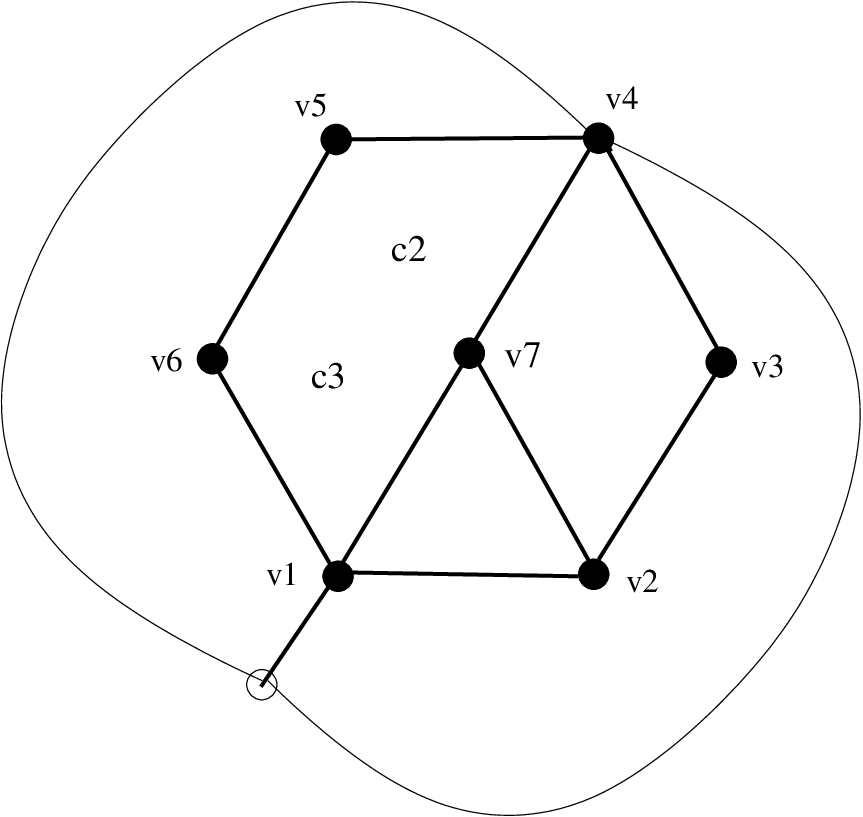} 
%\quad \quad
%\includegraphics[width=4.5cm]{3holesBasicCplus.eps}
\caption{A surface subgraph of $G$ when an essential 3-cycle has 2 edges interior to $c_g$.} 
\label{f:3holesBasicC}
\end{figure}
\end{center}
Suppose, by way of contradiction, that there is an $FF$ edge interior to the embedded 5-cycle $v_1v_7v_4v_5v_6$ containing $c_1$ and $c_2$. Then it lies on a critical embedded cycle which must pass through $v_7$. Since this is not possible  there is a single degree 3 vertex interior to the embedded cycle. It also follows, as in the previous proof, that $v_1v_4$ is the only $FF$ edge of $G_g$. Thus $G$ is equal to one of 5 surface graphs, namely $G_{b,6}, G_{b,7}, G_{b,8}$, or one of the isomorphs of $G_{b,7}$ and $G_{b,8}$ shown in Figure \ref{f:irred_PlusExtra}.
\begin{center}
\begin{figure}[ht]
\centering
\includegraphics[width=4.5cm]{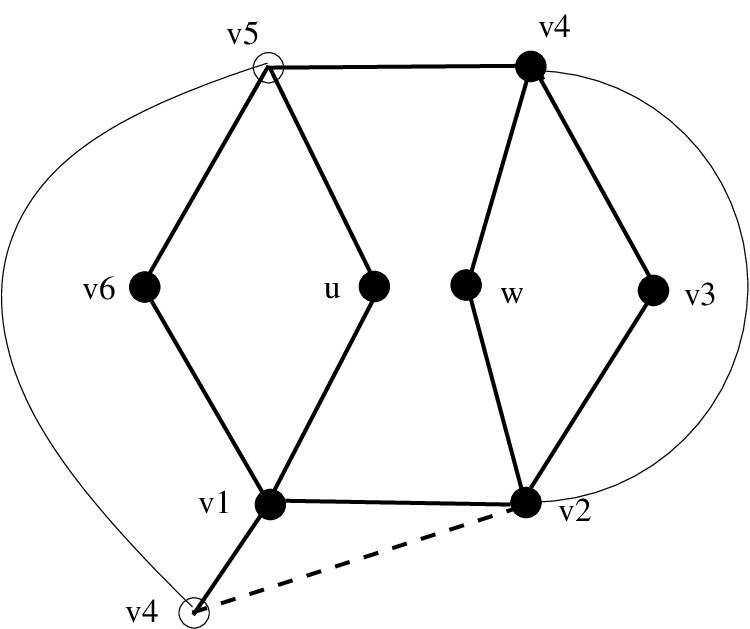}  
\caption{A surface subgraph of $G$ with $c_h$ on a critical embedded 5-cycle.} 
\label{f:3holes_5cycle_chB}
\end{figure}
\end{center}

The case that $c_h$ has length 6 with subwalks of lengths 2 and 3 with $h$ not lying on an essential 3-cycle does not arise. This is because the argument for this case in the proof of Proposition \ref{p:2holesIrred} also holds when $G$ is in $\fP_3$.
Since $G$ is in $\fP_3$ it is possible that $c_h$ has length 5 with subwalks of lengths 2 as shown in Figure \ref{f:3holes_5cycle_chB}.
In view of previous cases we may assume that the edge $h=v_1v_4$,  does not lie on an essential 3-cycle. Thus the edge $v_1w, uv_4$ do not exist. Also $uv_2$ and $v_5w$ do not exist, by the simplicity of $G$. It follows that there is an $FF$ edge $uz$ with $z$ interior to $v_1v_2wv_4v_5u$. This edge cannot lie on a critical walk of length 5 or 6 contradicting irreducibility in this case.

Next we consider the outcome $G_g=G_{b,1}$ in the construction of $G_g$ and show that $G=G_{b,5}$ in this case. Figure \ref{f:2holes3cycleCase}(i) illustrates a modified face graph for $G_g$ augmented by the two faces of $g=v_1v_2$. The other vertices of $G$ are interior to $c_g$ and the walks $c_1, c_2, c_3$ in $G$ derive from 3 quadrilateral faces of the associated modified face graph $(B_0,\lambda)$  for $G$.

Suppose first that $v_1$ does not belong to any of the walks $c_1, c_2, c_3$. Then  the edges that are interior to $c_g$ and incident to $v_1$, say $v_1u, v_1u_1 ,\dots , v_1u_r$, are $FF$ edges. See Figure \ref{f:3holesInteriorVertexCase}(i). 
Each 4-cycle face of $(B_0,\lambda)$ includes at most one of the edges $v_2u, uu_1, u_1u_2, \dots u_rv_3'$ and so if $r>1$ at least one of the edges $uu_1, \dots ,u_{r-1}u_r$
is an $FF$ edge, $e$ say. Suppose $e=uu_1$. To be on a critical walk requires $u_1v_2'$ to exist (since $uv_3$ does not exist). It follows that $u_jv_2'$ must exist for each $j$, since each edge $v_1u_i$ lies on a critical walk, and so $u_1u_2$ is a reducible edge. The argument is the same for each $u_iu_{i+1}$. Thus $r=1$ and the degree of $v_1$ in the face graph $(B_0,\lambda)$ is 4. Since $uu_1$ is incident to a 4-cycle face it follows readily that irreducibility implies that $G=G_{b,5}$. 
\begin{center}
\begin{figure}[ht]
\centering
\includegraphics[width=4cm]{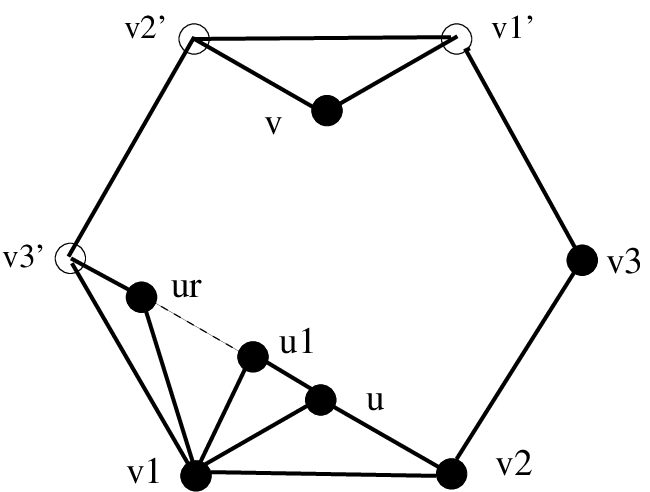} \quad \quad 
\includegraphics[width=4cm]{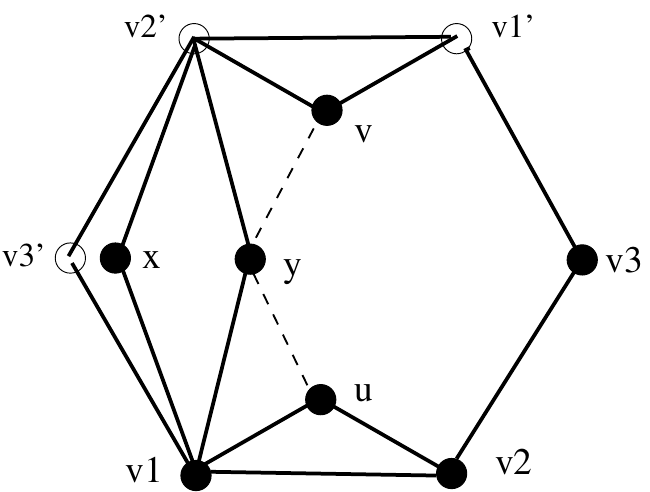}   
\caption{(i) When $v_1$ is not incident to a 4-cycle hole. (ii) When $v_1, v_2, v_1', v_2'$ are each incident to a 4-cycle hole.} 
\label{f:3holesInteriorVertexCase}
\end{figure}
\end{center} 

It remains to consider the case that each vertex $v_1, v_2, v_1', v_2'$ belongs to at least one of the walks $c_1, c_2, c_3$. Relabelling we may assume that $c_1$ is the embedding of a 4-cycle $v_1yv_2'x$, as in Figure \ref{f:3holesInteriorVertexCase}(ii), where, by simplicity, $y\neq u, v$. It follows that $x= v_3'$ 
%since otherwise there is an $FF$ edge interior to $v_1xv_2'v_3'$ that is incident to $x$ and such an edge cannot lie on a critical 5- or 6-cycle. 
Since $v_3$ does not have degree 2 neither remaining 4-cycle hole can be incident to both $v_2$ and $v_1'$. Also it is not possible for incidence to the pair $v_1, v_2$ or to the pair $v_1', v_2'$. It follows that all further possible edges of the form $v_1z$ or of the form $v_2'x$, are $FF$ edges. In particular we note that $v_1u$ is an $FF$ edge. On the other hand, given that $c_2$ is incident to $v_1'$ and $c_3$ is incident to $v_2$, it also follows that  $v_u$ cannot lie on a critical walk of length 5 or 6, a contradiction. 

This contradiction completes the proof when critical walks of length 6 exist. It remains to show that there is no irreducible surface graph $G$ in $\fP_3$ with $FF$ edges which only lie on critical walks of length 5. In this case consider one such edge $g$ with critical walk $c_g$ and associated subgraph $G_g$. Since $G_g$ is in $\fP_2$ it has an $FF$ edge $h$ and so this lies on a critical 5-walk $c_h$. Let $\pi(D_g), \pi(D_h)$ be the embedded triangulated discs in $\P$ with boundary walks $c_g, c_h$. Since $G$ is finite we may assume that $D_h$ does not contain $D_g$, for otherwise we could replace $g$ by $h$, continuing with similar replacements if necessary. If the the union of
the open sets $\pi(D_g^\circ), \pi(D_h^\circ)$
is an open disc then by Lemma \ref{l:blendingcriticals} the associated boundary walk is a critical walk. In fact it is a critical 6-walk, since it contains $c_1,c_2,c_3$, and this is contrary to our assumptions.
On the other hand if the union is not an open disc then $c_h$ has 2 subwalks interior to $c_g$ which is not possible for a 5-walk.
\end{proof}

\section{Constructibility and 3-rigidity}
%The minimally 3-rigid $\P$-graphs}
\label{s:mainproof}

Combining results of the previous sections we obtain the construction theorem, Theorem \ref{t:construction}  and the proof of Theorem \ref{t:projectiveA} which we repeat here as Theorem \ref{t:projectiveArepeat}.

\begin{thm}\label{t:construction}
Let $G\in \fP_k$, for $k=1,2$ or 3. Then $G$ is constructible by a finite sequence of planar vertex splitting moves from one of the eight irreducible surface graphs $G_{b,1}, \dots , G_{b,8}$.
%$G_3^2, G_4^3, G_5^1, G_{6,\alpha}^1$, $G_{6,\beta}^1$, $G_{6,\alpha}^2$, $G_{6,\beta}^2, G_7^0$.
\end{thm}

\begin{thm}\label{t:projectiveArepeat}
Let $G$ be a simple graph 
associated with a  partial triangulation
of the real projective plane. 
Then $G$ is minimally $3$-rigid if and only if $G$ is $(3,6)$-tight.
\end{thm}

\begin{proof}%[Proof of Theorem \ref{t:projectiveA}]
Let $H$ be the graph determined a partial triangulation of the real projective plane. If $H$ is minimally 3-rigid then it is well-known that $H$ is necessarily $(3,6)$-tight \cite{gra-ser-ser}. 

Suppose on the other hand that $H$ is a $(3,6)$-tight graph which is embeddable in $\P$. If this embedding is topologically contractible then $H$ is a planar graph which is $(3,6)$-tight. Such graphs are known to be 3-rigid, since $H$ is either a triangle or is the graph of a triangulation of the sphere.  On the other hand if the embedding is not topologically contractible then the associated surface graph, $G(H)$ say, belongs to $\fP_1, \fP_2$ or $\fP_3$. By Theorem \ref{t:construction} the graph $H$ is constructible by planar vertex splitting moves from one of eight irreducible  graphs, each of which has fewer than $8$ vertices. It is well-known that all $(3,6)$-tight graphs with fewer than $8$ vertices are minimally 3-rigid.
Since vertex splitting preserves minimal 3-rigidity (Whiteley \cite{whi}) it follows that $G$ is minimally 3-rigid. 
\end{proof}

\bigskip

{\bf Acknowledgements.} {This research was supported  by the EPSRC grant EP/P01108X/1, for the project \emph{Infinite bond-node frameworks}, and by a visit to the Erwin Schroedinger Institute in September 2018 in connection with the workshop on \emph{Rigidity and Flexibility of Geometric Structures}.
We thank referees for comments which have improved the presentation.}

\bibliographystyle{abbrv}
\def\lfhook#1{\setbox0=\hbox{#1}{\ooalign{\hidewidth
  \lower1.5ex\hbox{'}\hidewidth\crcr\unhbox0}}}

\vspace{.5in}

\section{Appendix: The uncontractible surface graphs.}
We give an alternative systematic approach to the identification of the uncontractible embeddings of $(3,6)$-tight graphs. As we have remarked these coincide with the topologically contractible embedding of $K_3$ and the 8 topologically uncontractible embeddings provided by Theorem \ref{t:construction}. This is because these irreducible embeddings happen to be uncontractible. We present them here anew in Figures  \ref{f:irreduciblesA}, \ref{f:irreduciblesB}, \ref{f:irreduciblesC},
%in Figure \ref{f:irreducibles} 
with new notation which reflects the invariant of Definition \ref{d:holedegree} which we use as an organising principle for their determination.
%We now identify the 8 uncontractible surface graphs appearing in $\fP_1, \fP_2$ or $\fP_3$. Figure \ref{f:irreducibles} gives two uncontractibles specified by face graphs and Figure \ref{f:2irreducibleEmbedded} gives three further uncontractibles as  embedded graphs in $\P$. Here $\P$ is represented as a disc or a rectangle, with diagonally opposite points of the boundary identified. The 3 remaining irreducibles are given in Figure \ref{f:3more}.
Specifically, we write $G^h_n, G^h_{n,\alpha}, G^h_{n,\beta} $ where $n$ is the number of vertices and $h=h(G)$ indicates the \emph{minimum hole incidence degree}.
%Definition \ref{d:holedegree}.

\begin{definition}\label{d:holedegree} Let $v$ be a vertex of the surface graph $G$ in $\fP_k$ for some $k=1,2,3$. Then (i) $\deg_F(v)$ is the number of facial 3-cycles incident to $v$, (ii)  
$
\deg_h(v) = \deg(v)-\deg_F(v)
$
is the \emph{hole incidence degree} for $v$, and (iii) 
$h(G)= \min_v \operatorname{deg}_h(v)$ is the \emph{minimum hole incidence degree}.
\end{definition}

%MOVED In what follows, we shall usually consider graphs as $\P$-graphs, with facial structure. However, let us note that the 8 surface graphs are pairwise nonisomorphic as graphs: $G^2_3$ is the triangle graph $K_3$; $G_4^3$ is $K_4$; $G_7^0$ is the cone graph over $K_{3,3}$; $G_5^1$ is $K_5-e$. Also the four remaining graphs, each with 6 vertices, are depletions of $K_6$ by  3 edges where these edges (i) form a copy of $K_3$, (ii) are disjoint, (iii) have one vertex shared by 2 edges, (iv) have 2 vertices of degree 1. In fact these graphs account for all possible $(3,6)$-tight graphs on $n$ vertices for $n=3, 4, 5, 6$, together with 1 of the 26 such graphs for $n=7$. (We remark that for $n=8, 9, 10$ the number of $(3,6)$-tight graphs rises steeply, with values 375, 11495, 613092 \cite{gra}.) 

%{\color{cyan}Would also  like to know the number of $(3,6)$-tight $\P$-graphs.}

\begin{center}
\begin{figure}[ht]
\centering
\includegraphics[width=2.5cm]{irred_Noface.eps}
%{irreducible1.eps}
\quad \quad
\includegraphics[width=2.5cm]{irred_3square}
%{irreducible2.eps}
\caption{$G^2_3=G_{b,1}$ and $G^3_4=G_{b,3}$.}
\label{f:irreduciblesA}
\end{figure}
\end{center}
\begin{center}
\begin{figure}[ht]
\centering
\includegraphics[width=3cm]{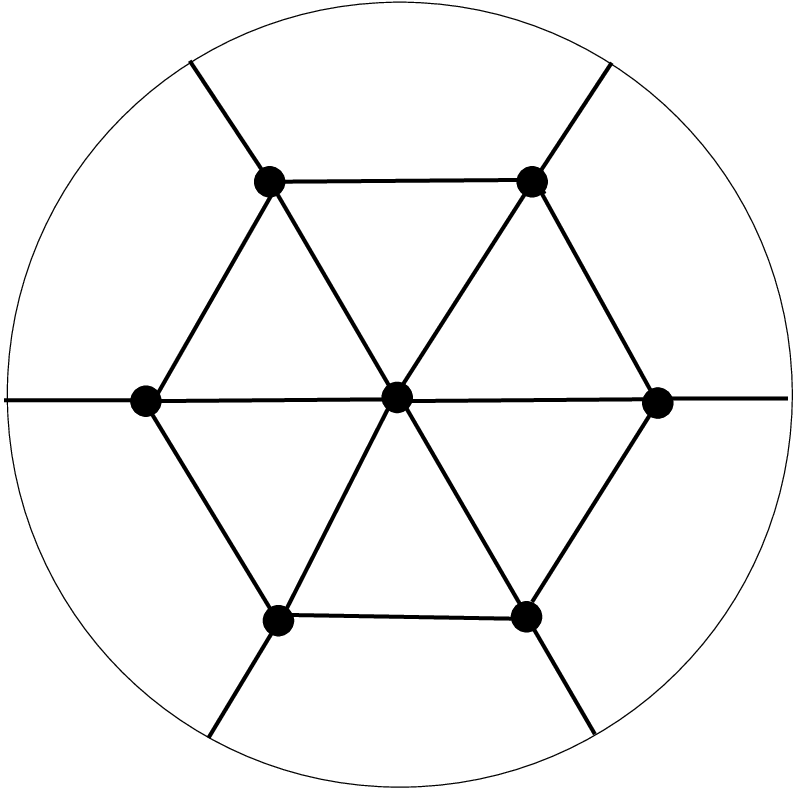}\quad \quad \quad
\includegraphics[width=3cm]{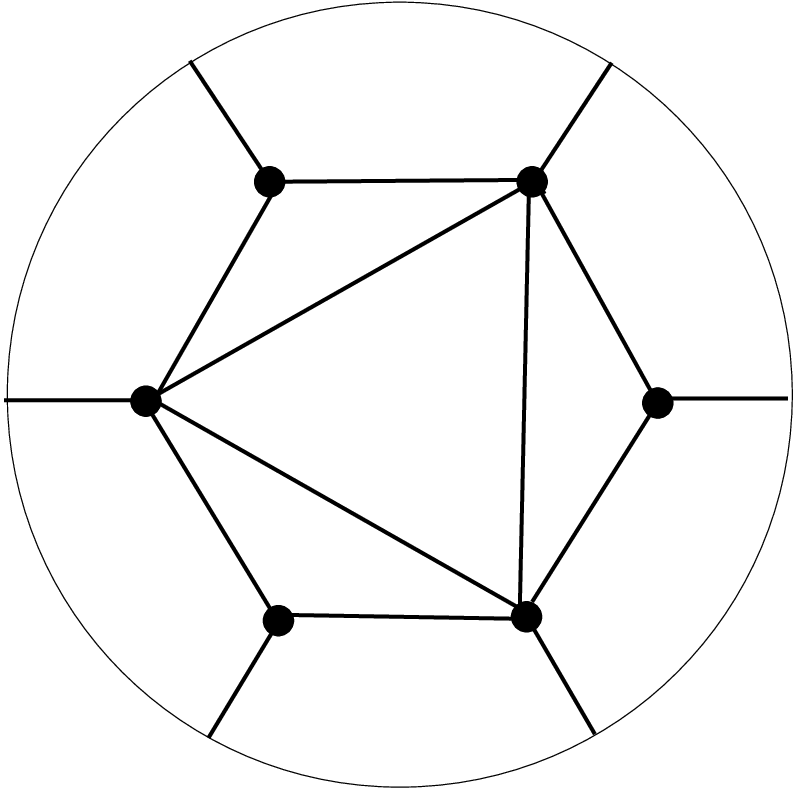}\quad \quad \quad
\includegraphics[width=3cm]{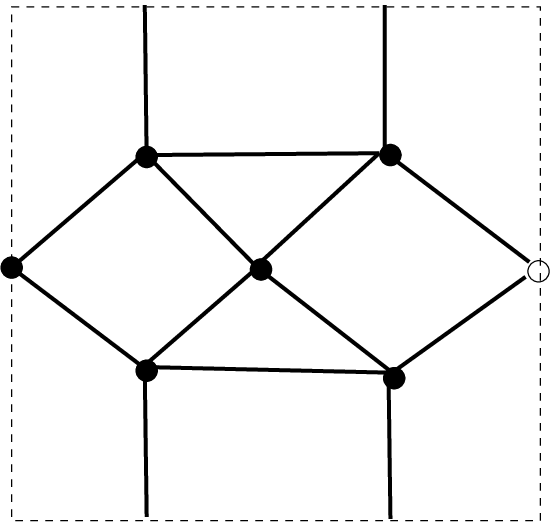}
\caption{$G^0_7= G_{b,5}$, with an interior vertex, $G^2_{6,\alpha}=G_{b,6}$ with a ``ring" of 4-cycle faces,  and  $G^2_{6,\beta}=G_{b,4}$ with no $FF$ edges.} 
\label{f:irreduciblesB}
\end{figure}
\end{center}
\begin{center}
\begin{figure}[ht]
\centering
\includegraphics[width=3cm]{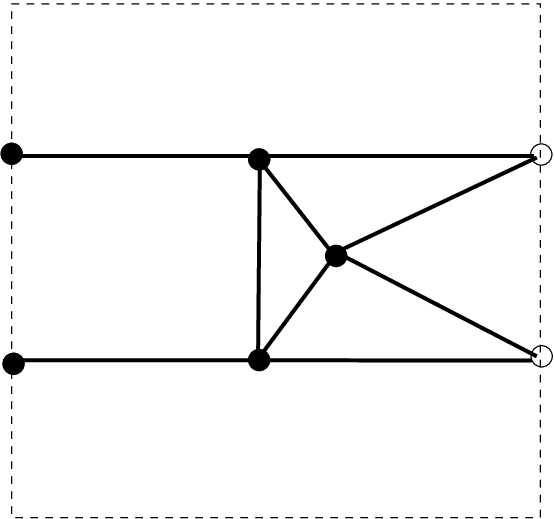}\quad \quad \quad
\includegraphics[width=3cm]{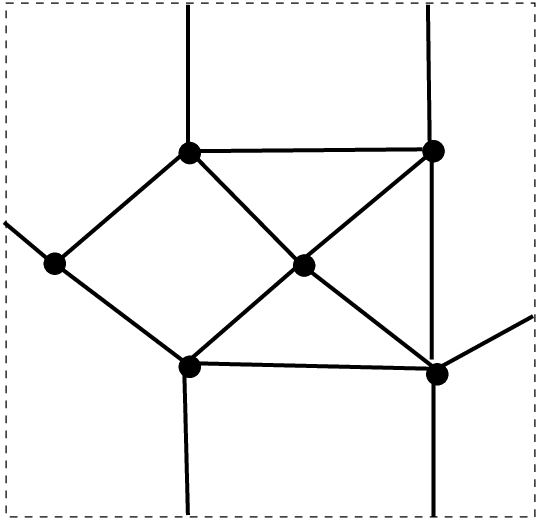}\quad \quad \quad
\includegraphics[width=3cm]{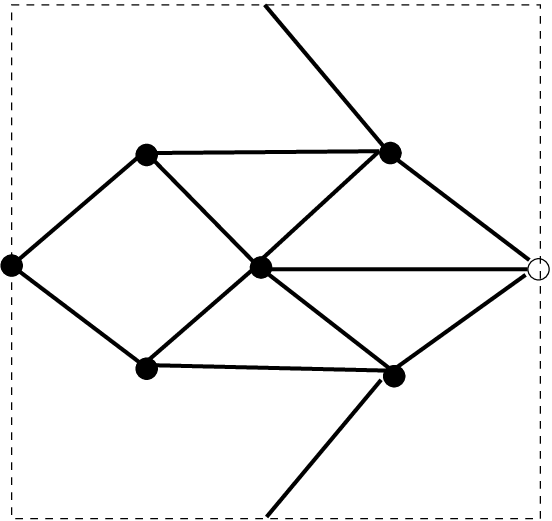} 
\caption{$G^1_5=G_{b,2}$ with 2 holes, $G^1_{6,\alpha}=G_{b,7}$ with 2 $FF$ edges and  $G^1_{6,\beta}=G_{b,8}$ with 3 $FF$ edges.} 
\label{f:irreduciblesC}
\end{figure}
\end{center}

%\newpage

%{ One can check that all 8 graphs are nonisomorphic as graphs (irrespective of their facial structure).

%{\color{red}MOVE OLD PROP 6.2 TO THE END}

We first note the following proposition identifying the unique uncontractible surface graph in $\fP_k$ with $h(G)=0$ which is to say that there is an interior vertex not contained in the boundary of $G$.

\begin{prop}\label{p:interiorvertex}
Let $G$ be an uncontractible surface graph in $\fP_k$, for $k=1, 2$ or $3$ with $h(G)=0$. Then $k=3$ and $G$ is the hexagon graph $G^0_7$.
\end{prop}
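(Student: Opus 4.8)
The plan is to analyze an uncontractible $G \in \fP_k$ possessing an interior vertex $v$, and to show that the uncontractibility forces $v$ to sit inside a very rigid local configuration that can only close up as $G^0_7$. First I would consider the triangulated disc subgraph $H$ induced by the facial 3-cycles incident to $v$, with boundary vertices $w_1, \dots, w_d$ in cyclic order, where $d = \deg(v) = \deg_F(v)$ since $v$ is interior. Each edge $vw_i$ is an $FF$ edge, so by uncontractibility each $vw_i$ lies on a nonfacial 3-cycle. By Lemma \ref{l:degree3lemma}(i) no interior vertex incident to an $FF$ edge can have degree 3 in an uncontractible graph, and by Lemma \ref{l:degree3lemma}(ii) $v$ cannot lie on any planar nonfacial 3-cycle; hence every nonfacial 3-cycle through an edge $vw_i$ must be \emph{nonplanar}. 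The first key step is to extract, from the existence of a nonplanar 3-cycle through each spoke $vw_i$, strong constraints linking the boundary vertices $w_i$ via the projective identification.

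Next I would exploit the topology of $\P$ directly. The complement of the embedded closed disc $\tilde X(H)$ in $\P$ is an open set whose structure is dictated, as in the proofs of Lemmas \ref{l:obstacle1} and \ref{l:obstacle2}, by the projective plane: removing a disc from $\P$ leaves a M\"obius strip. The nonfacial 3-cycles through the spokes must therefore route \emph{through} this M\"obius complement, and a nonfacial triangle $v w_i w_j$ being nonplanar means $w_i$ and $w_j$ are joined by an edge that crosses the hole region in the projective sense. I would count carefully: each such nonplanar triangle forces an edge between two boundary vertices of $H$ realized via the antipodal identification, and the Maxwell count $f(G)=6$ together with $(3,6)$-sparsity caps how many such cross-edges the boundary cycle $w_1 \cdots w_d$ can support. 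The central computation is to show that consistency of these identifications with $v$ interior and with $\partial H$ supporting the required nonplanar chords forces $d = 6$, with the $w_i$ identified antipodally in pairs, so that the boundary becomes a single $K_{3,3}$-type configuration and $G$ is exactly the cone over $K_{3,3}$, i.e.\ $G^0_7 \in \fP_3$.

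To pin down $k=3$, I would observe that once $v$ is a cone vertex over a triangulated boundary forced by the antipodal identifications, the remaining holes are determined: the identification pattern of the six boundary vertices into three antipodal pairs creates precisely three 4-cycle holes, placing $G$ in $\fP_3$, and comparing vertex and edge counts against $f(G)=3|V|-6=6$ confirms $|V|=7$ and matches the data recorded for $G^0_7$ (the cone over $K_{3,3}$). Uniqueness then follows because any uncontractible graph with an interior vertex reduces to this same local-to-global picture, and the only $(3,6)$-tight realization compatible with all the constraints is $G^0_7$ itself.

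The hard part, I expect, will be the second step: rigorously translating ``every spoke lies on a nonplanar nonfacial 3-cycle'' into the precise antipodal pairing of the boundary vertices, and ruling out configurations where the nonplanar triangles overlap or share vertices in ways that do not close up to $K_{3,3}$. This requires a careful case analysis of how the nonfacial chords can be distributed around $\partial H$ under the M\"obius-complement topology, controlled at each stage by the sparsity count $f(G')\geq 6$ and by Lemma \ref{l:m-holelemma} to exclude critical cycles properly containing holes. Lemma \ref{l:degree3lemma} does most of the work in forbidding the degenerate planar cases, so the genuine obstacle is the combinatorial enumeration of the admissible nonplanar chord patterns and showing that $d=6$ with antipodal pairing is the only survivor.
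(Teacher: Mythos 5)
Your overall route is the paper's: pass to the link $X(v)$ of the interior vertex $v$ with boundary $w_1,\dots,w_d$, use uncontractibility together with Lemma \ref{l:degree3lemma} to force every nonfacial $3$-cycle through a spoke $vw_i$ to be nonplanar, and then argue that the resulting chords, living in the M\"obius-strip complement of the link, can only close up as the cone over $K_{3,3}$. The difficulty is that the step you defer as ``the hard part'' is essentially the whole content of the proposition, and the plan supplies no mechanism for carrying it out; the phrase ``antipodal identification'' names the answer rather than deriving it. Two concrete ingredients are missing. First, the lower bound $\deg(v)\geq 6$. The paper obtains this by applying $(3,6)$-sparsity to $X(v)$ itself: it has $d+1$ vertices, $2d$ spoke and perimeter edges, and at least $\lceil d/2\rceil$ additional chords (each spoke needs a nonfacial $3$-cycle whose third vertex must be another neighbour of $v$, so every $w_i$ carries a chord), whence $2d+\lceil d/2\rceil\leq 3(d+1)-6$ and $d\geq 6$. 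Your appeal to the global Maxwell count does not obviously produce this local inequality, and without it you only have an upper-bound heuristic.

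Second, and more seriously, you need a combinatorial control on the chord pattern, and the paper's is a \emph{non-nesting} lemma: if $w_iw_j$ is a chord and $w_{i'}w_{j'}$ is another chord with both endpoints strictly between $w_i$ and $w_j$ in the cyclic order, then the $3$-cycle $vw_{i'}w_{j'}$ would be planar, contradicting what was already established. Non-nesting, together with the facts that every $w_i$ carries a chord and no $w_i$ has degree $3$, forces the complementary faces of $X(v)$ to be planar $4$-cycles; by Lemma \ref{l:m-holelemma} each must contain a hole, and since $G$ has at most three holes there are at most three such faces, giving $d\leq 6$ and hence $d=6$ with exactly the antipodal matching you anticipated. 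Without this lemma (or an equivalent statement about disjoint essential arcs in the M\"obius strip) your proposed case analysis has no way to exclude, for example, two essential chords sharing an endpoint, or a single $w_i$ carrying several chords, so the argument as written does not close.
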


\begin{proof}
Let $v_1$ be the interior vertex of $G$. Since it is incident to an $FF$ edge and this edge necessarily lies on an essential 3-cycle, in view of Lemma \ref{l:planar3cycle}, it follows that $G$ has a modified face graph representation $(B_0,\lambda)$, with 6-cycle boundary, for which the faces incident to $v_1$ provide edges forming paths $\pi_1, \pi_2$ from $v_2$ to $v_3'$ and from $v_2'$ to $v_3$ respectively, as indicated  in Figure \ref{f:interiorNEW}. 
\begin{center}
\begin{figure}[ht]
\centering
\includegraphics[width=4cm]{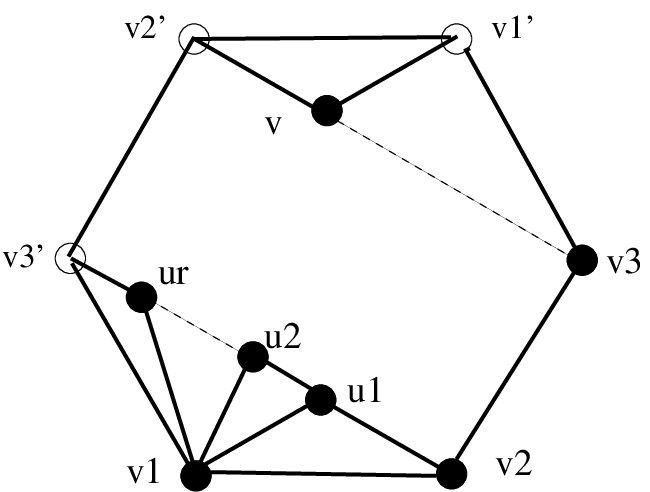}
\caption{Faces incident to $v_1, v_1'$ associated with an interior vertex of $G$.} 
\label{f:interiorNEW}
\end{figure}
\end{center}

If $u_rv_2'$ exists then $v_3'u_r$ is $FF$ and there exist edges $u_rz, zv_3$ for an essential 3-cycle for $v_3'u_r$. Thus the edges $u_iv_3$ necessarily exists for an essential 3-cycle for $v_1u_i$, for $i=1,\dots ,u_{r-1}$. It follows that the $FF$ edge $v_2u_1$ does not lie on an essential 3-cycle if $r\geq 2$. However, for both $r=1,2$, including the possibility that $z=u_1$, there is no completion such that $G$ is (3,6)-tight.

Since $u_rv_2'$ does not exist there exists an $FF$ edge $v_3'y$ of $B_0$ 
together with edges $yz, zv_3$ for an essential 3-cycle. Once again, in the manner of the previous paragraph, there is no completion of $B_0$ to form a modified face graph for a (3,6)-tight surface graph for $\P$.
\end{proof}

The next lemma is  key to the determination of the uncontractible surface graphs $G$ in $\fP_k$ for $k=2$ or $3$. In the proof we use the fact, from Corollary \ref{c:propercontainment}, that it is not possible for $G$ to have a 4-cycle hole whose boundary walk is contained in an embedded 4-cycle of planar type. 

\begin{lemma}\label{degh1lemma} 
%{\color{blue}reworded to help with  later reference}
Let $G\in \fP_k$, for $k=2$ or 3, be uncontractible with no interior vertex and 
%{\color{blue}[leaving  out "with no interior vertices"]} 
let $v_1$ be a vertex with $\operatorname{deg}_h(v_1)=1$  which lies on the boundary of a 4-cycle hole of $G$ with edges $v_1v_2,v_2v_3,v_3v_4, v_4v_1$. Then $\operatorname{deg}(v_1)= 4$ if $v_1$ is not adjacent to $v_3$ and $\operatorname{deg}(v_1)= 5$ otherwise.
\end{lemma}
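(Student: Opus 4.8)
The plan is to read off the local structure at $v_1$ from $\deg_h(v_1)=1$, convert uncontractibility into the existence of chords, and then use the projective topology to bound the degree and split into the two cases.

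First I would record the fan at $v_1$. Since $\deg_h(v_1)=1$ the facial $3$-cycles incident to $v_1$ form a single fan, so I may list the neighbours in cyclic order as $v_2=u_0,u_1,\dots,u_{d-1}=v_4$, where $d=\deg(v_1)$, with faces $v_1u_iu_{i+1}$ for $0\le i\le d-2$ and the $4$-cycle hole occupying the remaining angular sector (so that $v_3$ is the hole vertex opposite $v_1$). The $FF$ edges at $v_1$ are then exactly $v_1u_1,\dots,v_1u_{d-2}$. A quick check rules out $d\le 3$: if $d=3$ the unique $FF$ edge $v_1u_1$ has no common neighbour with $v_1$ other than the two face apices $u_0,u_2$, so it lies on no nonfacial $3$-cycle and is contractible, contradicting uncontractibility; hence $d\ge 4$. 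Uncontractibility now forces each $FF$ edge $v_1u_i$ to lie on a nonfacial $3$-cycle $v_1u_iu_j$, that is, on a chord $u_iu_j\in E$ with $u_j\ne u_{i\pm1}$. By the planar-$3$-cycle argument already used in the proof of Proposition \ref{p:interiorvertex} — a planar such $3$-cycle would bound a triangulated disc with at least four vertices and hence a contractible $FF$ edge of $G$ — every one of these chords must in fact form a nonplanar (essential) $3$-cycle with $v_1$.

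The topological step is the heart of the matter. In $\P$ any two essential simple closed curves meet in an odd number of points, so two edge-disjoint essential $3$-cycles through $v_1$ must cross at $v_1$; equivalently the index pairs $\{i,j\}$ of their chords interleave in the linear order $u_0<\dots<u_{d-1}$. To exploit this I would cut $\P$ open along one essential covering $3$-cycle $C_0=v_1u_au_b$, producing a disc $\widehat D$ whose boundary is a hexagon formed by two copies of $C_0$, with $v_1$ splitting into two antipodal boundary vertices. Every remaining essential $3$-cycle then unfolds to a simple arc of $\widehat D$ joining the two copies of $v_1$, and since $G$ is embedded these arcs are pairwise non-crossing, hence nested; the face-free $4$-cycle hole lies in one of the strips between consecutive arcs. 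Working inside $\widehat D$ with the hypothesis that $G$ has no interior vertex, I would show that each strip between consecutive arcs must be a single facial triangle: a wider strip would contain a vertex interior to it, which, having no interior vertices in $G$, would force a hole strictly inside a planar cycle or a contractible edge, both excluded by the no-interior-vertex hypothesis together with uncontractibility (cf.\ Lemmas \ref{l:degree3lemma} and \ref{l:m-holelemma}). Counting the spokes $v_1u_i$ consumed by the nested arcs then caps the number of $FF$ edges at three, giving $d\le 5$.

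Finally I would separate the two cases by the position of $v_3$. If $v_1\not\sim v_3$ then $v_3$ is not a spoke, the hole edges $v_2v_3,v_3v_4$ contribute no $3$-cycle through $v_1$, and the extreme $FF$ edges $v_1u_1$ and $v_1u_{d-2}$ can only be covered by essential $3$-cycles through the hole-neighbours $v_2=u_0$ and $v_4=u_{d-1}$; tracing this through the nested-arc picture forces exactly two interior neighbours, so $d=4$. If $v_1\sim v_3$ then $v_3=u_m$ is itself a spoke, and the hole edges $v_2v_3=u_0u_m$ and $v_3v_4=u_mu_{d-1}$ become chords supplying the essential $3$-cycles $v_1u_0u_m$ and $v_1u_mu_{d-1}$ which cover $v_1u_m$ and its neighbouring $FF$ edges; requiring these to be genuine hole edges rather than face edges forces $2\le m\le d-3$, whence $d\ge 5$ and so $d=5$. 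The main obstacle is the middle paragraph: making rigorous that all covering $3$-cycles are essential and pairwise cross at $v_1$, that cutting straightens them into nested arcs, and above all that the no-interior-vertex hypothesis collapses each inter-arc strip to a single triangle, since this is exactly what excludes $d\ge 6$ and, in the non-adjacent case, $d=5$. A secondary subtlety that must be handled carefully is the bookkeeping of chords incident to the hole-neighbours $v_2,v_4$ and the toggling role of the opposite vertex $v_3$, as it is precisely the adjacency $v_1\sim v_3$ that shifts the answer between $4$ and $5$.
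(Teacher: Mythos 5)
Your opening moves match the paper's: the fan of faces at $v_1$ forced by $\deg_h(v_1)=1$, the exclusion of degree $3$, and the observation that uncontractibility forces each spoke $v_1u_i$ onto a nonfacial $3$-cycle given by a chord $u_iu_j$. But the core of your argument --- cutting $\P$ along one essential $3$-cycle and asserting that the strips between the resulting nested arcs are single facial triangles --- is not proved, and as stated it appears to be false: in the actual uncontractible graphs $G^1_{6,\alpha}$ and $G^1_{6,\beta}$ the regions between consecutive essential $3$-cycles through $v_1$ contain holes and further vertices (for instance $v_3$), not single triangles. You flag this paragraph yourself as ``the main obstacle,'' but it is precisely the content of the lemma, so the proposal does not close the argument. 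The concluding case split also rests on an unjustified claim: a nonfacial $3$-cycle covering $v_1u_1$ has the form $v_1u_1u_j$ with $j\notin\{0,2\}$ and need not pass through the hole-neighbours $u_0,u_{d-1}$; interleaving alone does not exclude, say, the chord system $\{1,3\},\{2,4\},\{3,0\}$ when $d=5$ and $v_1\not\sim v_3$, so your deduction $d=4$ in that case has a gap.

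The paper's proof runs along a different axis. Rather than analysing the spokes and their covering $3$-cycles topologically, it analyses the \emph{perimeter} edges $w_iw_{i+1}$ of the link of $v_1$. Supposing $n\geq 5$ (resp.\ $n\geq 6$ in the adjacent case), it shows that if such an edge were an $FF$ edge its covering nonfacial $3$-cycle could only be $w_{i-1}w_iw_{i+1}$ or $w_iw_{i+1}w_{i+2}$, and either option produces a planar $4$- or $5$-cycle properly containing the hole(s), contradicting Lemma \ref{l:m-holelemma}. Hence every perimeter edge must lie on a hole boundary; since consecutive perimeter edges are separated by the $FF$ spokes $v_1w_j$, this requires at least $\lceil n/2+1\rceil$ distinct holes, and the bound of $3$ holes forces $n=4$ (resp.\ $n=5$). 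If you want to salvage your approach, the missing ingredient is exactly this use of Lemma \ref{l:m-holelemma} to convert a putative covering $3$-cycle for a perimeter edge into a critical cycle properly containing a hole; the hole-counting step then replaces your strip-collapsing claim.
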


\begin{proof}
 %Assume first that $v_1$ lies on the boundary of a 4-cycle hole $H_1$, with boundary cycle $v_1v_2v_3v_4v_1$.  
Let $v_2=w_1,w_2,\dots,w_n=v_4$ be the neighbours of $v_1$ in cyclic order. Since $\operatorname{deg}_h(v_1)=1$, we also have the edges $w_1w_2,\dots,w_{n-1}w_n$. Note that $\deg(v_1)\geq 4$ since if the degree is 3 then the edge $v_1w_2$ is contractible.

%SIDE THOUGHT Case (a). Suppose that  $v_3\neq w_i$, for every $i\in\{1,2,\dots, n\}$. If $n \geq 6$ then as in the proof of Theorem \ref{t:oneholeirreducible} for each vertex $w_i, 2\leq i \leq n-1$ there is an edge $w_iw_j$ for some $1\leq j \leq n$ ... Hmmmm

%\begin{description}
%	\item[Case (a)] 
{\bf Case (a).}	$v_3\neq w_i$, for every $i\in\{2,\dots,n-1\}$. \\
%We have $\operatorname{deg}(v_1)\geq 4$, since otherwise, since $G$ is simple, the edge $v_1w_2$ is contractible. 
Suppose, by way of contradiction, that $n\geq 5$. 
% As in the proof of Proposition \ref{t:oneholeirreducible} 
For $2\leq i \leq n-1$ it follows from the uncontractibility of the $FF$ edge $v_1w_i$ that there exists an edge $w_iw_r$,
with $1\leq r \leq n$. 
Moreover the 3-cycle $w_iw_rv_1w_i$ cannot be an embedded planar 3-cycle since it is not a face and since it contains no holes of $G$ there would be a contractible edge.

Since there are at most 3 holes and the Case (a) assumption is in force, there is an index $1\leq i<n$ and an associated cycle of edges through $w_i, w_{i+1}, w_s, w_r$, with $r \leq s$, which is triangulated by faces of $G$. 
%(The cycle is triangulated by faces of $G$ since its embedding in $\P$ lies in the complement of the open M\"obius strips associated with any hole-containing 4-cycle $w_{i'}w_{i'+1}w_{s'}w_{r'}$.)  This cycle is composed of the edge $w_iw_{i+1}$ together with $w_iw_r$ and $ w_{i+1}w_s$ (for nonfacial 3-cycles) and possible further edges $w_jw_{j+1}$. ??? 
%(If $r=s$ the cycle is a 3-cycle.) 
%%%%{\color{blue}instead of "planar and triangulated".} 
Thus $w_iw_{i+1}$ is an $FF$ edge.
%We claim that the triangulated cycle is a either a facial 3-cycle (with $r=s$) or a 4-cycle triangulated by 2 faces. Indeed, this follows from the uncontractibility of $G$ since a triangulated 3- or 4-cyle with an interior vertex has a contractible edge.
Since $G$ is uncontractible $w_iw_{i+1}$ lies in a non-facial 3-cycle. Since $v_1w_j$ is also an $FF$ edge for every $j\in\{2,\dots,n-1\}$, it follows that there are just two candidate non-facial 3-cycles: $w_{i-1}w_iw_{i+1}w_{i-1}$ or $w_iw_{i+1}w_{i+2}w_i$.  

\begin{description}
\item[(i)] If $w_iw_{i+1}$ lies on the cycle $w_{i-1}w_iw_{i+1}w_{i-1}$, then the 4-cycle $w_{i-1}v_1w_rw_{i+1}w_{i-1}$ contains strictly the hole boundary $v_1v_2v_3v_4v_1$,  contradicting  Corollary \ref{c:propercontainment}. 
Note that this 4-cycle does contain the hole in our sense, as shown by the shading in Figure \ref{f:case_a} indicating a triangulated disc in $\P$ with boundary equal to this 4-cycle.
%(See Figure \ref{f:case_a}). 
%\textcolor[rgb]{0,0,1}{label figure}).

\begin{figure}[h!]
\centering
\begin{tikzpicture}
%\path [fill=red3] (4.799,3.75) -- (6.5,5) -- (3.5,5) -- (3.5,4.5) -- (4.799,3.75) ;
%\path [fill=red3] (2.201,2.25) -- (3.5,1.5) -- (3.5,1) -- (0.5,1) -- (2.201,2.25) ;
\path [fill=red1] (2.75,1.701) -- (3.5,3) -- (2.75,4.299)--(2,3)-- (2.75,1.701);
%\path [fill=red1] (2.201,3.75) -- (2.201,2.25) -- (0.5,1) -- (0.5,5) -- (2.201,3.75) ;
\path [fill=red3] (1.4,5)-- (3.1,4.299) --(3.5,3) -- (3.5,1.701) -- (3.3,1) -- (0.5,1) -- (0.5,5) -- (1.4,5);
\path [fill=red3] (4.5,5) -- (4.75,4.299) -- (5.2,1) -- (6.5,1) -- (6.5,5) -- (4.5,5) ;
%\path [fill=green6] (4.799,3.75) -- (3.5,3) -- (4.799,2.25) -- (4.799,3.75) ;
%\path [fill=green1] (4.799,3.75) -- (3.5,3) -- (3.5,4.5) -- (4.799,3.75) ;
%\path [fill=green2] (4.799,2.25) -- (3.5,3) -- (3.5,1.5) -- (4.799,2.25) ;
%\path [fill=green3] (3.5,1.5) -- (3.5,3) -- (2.201,2.25) -- (3.5,1.5) ;
%\path [fill=green4] (2.201,2.25) -- (3.5,3) -- (2.201,3.75) -- (2.201,2.25) ;
%\path [fill=green5] (3.5,3) -- (2.201,3.75) -- (3.5,4.5) -- (3.5,3) ;

\draw [red, thick](3.5,4.299)--(3.85,5);
\draw [red, thick](3.75,1)--(3.5,1.701);
\draw[green, thick](4.75,4.299)-- (4.5,5);
\draw[green, thick](3.3,1)--(3.5,1.701);
\draw[blue, thick](3.1,4.299)--(1.4,5);
\draw[blue, thick](5.2,1)--(4.75,4.299);

\draw [->, thick] (0.5,1) -- (4,1);
\draw [thick](4,1) -- (6.5,1);
\draw [<-, thick] (3, 5) -- (6.5,5);
\draw [thick](0.5,5) -- (3,5);
\draw [->, thick] (6.5,1) -- (6.5,3);
\draw [thick](6.5,3) -- (6.5,5);
\draw [<-, thick] (0.5, 2) -- (0.5,5);
\draw [thick](0.5,1) -- (0.5,2);

\draw [thick](2.75,1.701)--(4.75,1.701)--(4.75,4.299)--(2.75,4.299)--(2,3)--(2.75,1.701);
\draw [thick](2.75,1.701) -- (3.5,3) -- (2.75,4.299);
\draw [thick](3.5,3) -- (4.75,1.701);
\draw [thick](3.5,3) -- (4.75,4.299);
\draw [thick](3.5,3) -- (3.5,4.299);
\draw [thick](3.5,3) -- (3.5,1.701);
\draw [thick](3.5,3) -- (3.1,4.299);
\draw [fill] (3.5,3) circle [radius=0.05];
\node [right] at (3.7,2.8) {$v_1$};
\draw [fill] (2.75,1.701) circle [radius=0.05];
\node [below] at (2.75,1.7) {$w_{n}$};
\draw [fill] (3.5,1.701) circle [radius=0.05];
\node [above right] at (3.5,1.7) {$w_{r}$};
%\draw [fill] (4.75,1.701) circle [radius=0.05];
%\node [right] at (4.6,1.3) {$w_{j}$};
\draw [fill] (4.75,4.299) circle [radius=0.05];
\node [right] at (4.75,4.3) {$w_{i+1}$};
\draw [fill] (3.5,4.299) circle [radius=0.05];
\node [above right] at (3.5,4.299) {$w_{i}$};
\draw [fill] (3.1,4.299) circle [radius=0.05];
\node [above] at (3,4.299) {$w_{i-1}$};
\draw [fill] (2.75,4.299) circle [radius=0.05];
\node [left] at (2.7,4.299) {$w_{1}$};
\draw [fill] (2,3) circle [radius=0.05];
\node [left] at (2,3) {$v_{3}$};
\end{tikzpicture}
\caption{The 4-cycle $w_{i-1}v_1w_r w_{i+1}w_{i-1}$ contains strictly the 4-hole $v_1v_2v_3v_4v_1$.}\label{f:case_a}
\end{figure}

\item[(ii)] If $w_iw_{i+1}$ lies on the cycle  $w_iw_{i+1}w_{i+2}w_i$, then, {noting that $w_{i+2}w_i$ is an edge}, we claim  that the 5-cycle $w_iv_1w_rw_{i+1}w_{i+2}w_i$  contains all the holes, which is a contradiction.  
To see this note that by 
%Lemma {\color{red}OLD LEMMA 4.1.} 
Corollary \ref{c:propercontainment} the 4-cycle $v_1w_rw_iw_{i+2}v_1$ contains no holes. See Figure \ref{f:case_a'}. 
%{\color{red}This is correct but it would help to have another diagram like Figure 7 with $w_{i+2}$ appearing and shading outside the strip for the 5-cycle}

\begin{figure}[h!]
\centering
\begin{tikzpicture}
%\path [fill=red3] (4.799,3.75) -- (6.5,5) -- (3.5,5) -- (3.5,4.5) -- (4.799,3.75) ;
%\path [fill=red3] (2.201,2.25) -- (3.5,1.5) -- (3.5,1) -- (0.5,1) -- (2.201,2.25) ;
\path [fill=red1] (2.75,1.701) -- (3.5,3) -- (2.75,4.299)--(2,3)-- (2.75,1.701);
%\path [fill=red1] (2.201,3.75) -- (2.201,2.25) -- (0.5,1) -- (0.5,5) -- (2.201,3.75) ;
\path [fill=red3] (1.4,5)-- (3.1,4.299) --(3.5,3) -- (3.5,1.701) -- (3.3,1) -- (0.5,1) -- (0.5,5) -- (1.4,5);
\path [fill=red3] (4.5,5) -- (4.75,4.299) -- (4.75,3.5) -- (5.2,1) -- (6.5,1) -- (6.5,5) -- (4.5,5) ;
%\path [fill=green6] (4.799,3.75) -- (3.5,3) -- (4.799,2.25) -- (4.799,3.75) ;
%\path [fill=green1] (4.799,3.75) -- (3.5,3) -- (3.5,4.5) -- (4.799,3.75) ;
%\path [fill=green2] (4.799,2.25) -- (3.5,3) -- (3.5,1.5) -- (4.799,2.25) ;
%\path [fill=green3] (3.5,1.5) -- (3.5,3) -- (2.201,2.25) -- (3.5,1.5) ;
%\path [fill=green4] (2.201,2.25) -- (3.5,3) -- (2.201,3.75) -- (2.201,2.25) ;
%\path [fill=green5] (3.5,3) -- (2.201,3.75) -- (3.5,4.5) -- (3.5,3) ;

\draw [red, thick](3.1,4.299)--(3.35,5);
\draw [red, thick](3.75,1)--(3.5,1.701);
\draw[green, thick](4.75,4.299)-- (4.5,5);
\draw[green, thick](3.3,1)--(3.5,1.701);
\draw[blue, thick](3.1,4.299)--(1.4,5);
\draw[blue, thick](5.2,1)--(4.75,3.5);

\draw [->, thick] (0.5,1) -- (4,1);
\draw [thick](4,1) -- (6.5,1);
\draw [<-, thick] (3, 5) -- (6.5,5);
\draw [thick](0.5,5) -- (3,5);
\draw [->, thick] (6.5,1) -- (6.5,3);
\draw [thick](6.5,3) -- (6.5,5);
\draw [<-, thick] (0.5, 2) -- (0.5,5);
\draw [thick](0.5,1) -- (0.5,2);

\draw [thick](2.75,1.701)--(4.75,1.701)--(4.75,4.299)--(2.75,4.299)--(2,3)--(2.75,1.701);
\draw [thick](2.75,1.701) -- (3.5,3) -- (2.75,4.299);
%\draw [thick](3.5,3) -- (4.75,1.701);
\draw [thick](3.5,3) -- (4.75,4.299);
\draw [thick](3.5,3) -- (4.75,3.5);
\draw [thick](3.5,3) -- (3.5,1.701);
\draw [thick](3.5,3) -- (3.1,4.299);
\draw [fill] (3.5,3) circle [radius=0.05];
\node [right] at (3.7,2.8) {$v_1$};
\draw [fill] (2.75,1.701) circle [radius=0.05];
\node [below] at (2.75,1.7) {$w_{n}$};
\draw [fill] (3.5,1.701) circle [radius=0.05];
\node [above right] at (3.5,1.7) {$w_{r}$};
%\draw [fill] (4.75,1.701) circle [radius=0.05];
%\node [right] at (4.6,1.3) {$w_{j}$};
\draw [fill] (4.75,4.299) circle [radius=0.05];
\node [right] at (4.75,4.3) {$w_{i+1}$};
\draw [fill] (4.75,3.5) circle [radius=0.05];
\node [above right] at (4.75,3.5) {$w_{i+2}$};
\draw [fill] (3.1,4.299) circle [radius=0.05];
\node [above] at (3,4.299) {$w_{i}$};
\draw [fill] (2.75,4.299) circle [radius=0.05];
\node [left] at (2.7,4.299) {$w_{1}$};
\draw [fill] (2,3) circle [radius=0.05];
\node [left] at (2,3) {$v_{3}$};
\end{tikzpicture}
\caption{The 5-cycle $w_iv_1w_rw_{i+1}w_{i+2}w_i$  contains all the holes.}\label{f:case_a'}
\end{figure}
\end{description}
These contradictions, together with Lemma \ref{l:degree3lemma} show that $n=4$ in this case.
%{\color{red}CHECK LOGIC - DELETE THIS, UNNECESSARY ?}Hence none of the edges $w_2w_3,w_3w_4,\dots,w_{n-2}w_{n-1}$ is an $FF$ edge. Also, the same holds for the edge $v_2w_2$, since it cannot lie on a non-facial 3-cycle. Thus every edge of the form $v_2w_2,w_2w_3,w_3w_4,\dots,w_{n-1}w_{n}$ is on the boundary of a hole. Since every edge $v_1w_j$ is an $FF$ edge, $j=2,\dots, n-1$,  it follows that $G$ contains at least $\left\lceil \frac{n}{2} +1 \right\rceil$ holes. Thus, $n= 4$.
\medskip

%\item[(b)]
 
{\bf Case (b).}   $v_3= w_{i_0}$, for some $i_0\in\{2,\dots,n-1\}$.\\
Since $G$ is a simple graph and $v_1v_3$ is now an uncontractible $FF$ edge we have $\operatorname{deg}(v_1)\geq 5$. Suppose, by way of contradiction, that $n\geq 6$. As in case (a) there then exists an $FF$  edge $w_iw_{i+1}$ and some vertex $w_r$ providing a facial 3-cycle  $w_iw_{i+1}w_r$. (See Figure \ref{MDsep2}.) The only possible non-facial 3-cycle for the $FF$ edge $w_iw_{i+1}$ is $v_3w_iw_{i+1}v_3$. However, this gives a contradiction since the 4-cycle $w_iv_3v_4v_1w_i$ strictly 
contains the hole $v_1v_2v_3v_4v_1$. Thus $n=5$. 

 \textcolor[rgb]{0,0,1}. 
\begin{figure}[h!]
\centering
\begin{tikzpicture}
\path [fill=red1] (0.5,3)--(1.25,1.701)--(2,3)--(1.25,4.299)--(0.5,3);
\path [fill=red3] (3.5,4.299) -- (2,3)--(1.25,1.701)-- (0.5,3)--(0.5,3) to[out=80,in=180] (1.25,4.6)-- (3,4.6) to[out=0,in=80] (3.5,4.299);

\draw [->, thick] (0.5,1) -- (4.5,1);
\draw [thick](4.5,1) -- (6.5,1);
\draw [<-, thick] (2.5, 5) -- (6.5,5);
\draw [thick](0.5,5) -- (2.5,5);
\draw [->, thick] (6.5,1) -- (6.5,2.5);
\draw [thick](6.5,2.5) -- (6.5,5);
\draw [<-, thick] (0.5, 2.5) -- (0.5,5);
\draw [thick](0.5,1) -- (0.5,2.5);

\draw [thick](0.5,3)--(1.25,1.701)--(2,3)--(1.25,4.299)--(0.5,3);
\draw [thick](1.25,1.701)--(4.5,1.701)--(6.5,3);
\draw [thick](1.25,4.299)--(3.5,4.299)--(4.5,4.299)--(6.5,3);
\draw [thick](4.5,1.701) -- (2,3) -- (4.5,4.299);
\draw [thick](2,3) -- (6.5,3);
\draw [thick](2,3) -- (3.5,4.299);

\draw [blue, thick](0.5,3) to[out=80,in=180] (1.25,4.6);
\draw [blue, thick](1.25,4.6) -- (3,4.6);
\draw [blue, thick](3,4.6) to[out=0,in=80] (3.5,4.299);
\draw [blue, thick] (4.5,4.299) to[out=0,in=90] (6.5,3);
\draw [red, thick](3.5,4.299)--(3.45,5);
\draw [red, thick](4.1,1)--(4.5,1.701);
\draw[green, thick](4.5,4.299)-- (4,5);
\draw[green, thick](3.5,1)--(4.5,1.701);

\draw [fill] (2,3) circle [radius=0.05];
\node [below] at (1.2,2.8) {$v_1$};
\draw [fill] (1.25,1.701) circle [radius=0.05];
\node [below] at (1.25,1.7) {$v_{4}$};
\draw [fill] (1.25,4.299) circle [radius=0.05];
\node [above] at (1.25,4.29) {$v_{2}$};
\draw [fill] (0.5,3) circle [radius=0.05];
\node [right] at (0.5,3) {$v_{3}$};
\draw [fill] (3.5,4.299) circle [radius=0.05];
\node [above right] at (3.5,4.299) {$w_{i}$};
\draw [fill] (4.5,1.701) circle [radius=0.05];
\node [below right] at (4.5,1.701) {$w_{r}$};
\draw [fill] (4.5,4.299) circle [radius=0.05];
\node [above right] at (4.5,4.299) {$w_{i+1}$};
\draw [fill] (6.5,3) circle [radius=0.05];
\node [below right] at (6.5,3) {$v_3$};

\end{tikzpicture}
\caption{The 4-cycle $v_4v_3v_1w_iv_4$ contains strictly the 4-hole $v_1v_2v_3v_4v_1$.}\label{MDsep2}
\end{figure}

%Thus, each $w_iw_{i+1}$ is not an $FF$  edge. Similarly, we can argue that such a 4-cycle would be created if $w_{i_0-1}w_{i_0}$ was an $FF$  edge. Thus again we have that $w_1w_2$ is not an $FF$  edge, since it does not lie on a non-facial 3-cycle. Thus, the edges $w_1w_2,w_2w_3,w_3w_4$ should lie on the boundaries of different holes, which again contradicts the number of the holes of $G$. Thus $\operatorname{deg}(v_1)= 5$.

%\end{description}
\end{proof}

\begin{prop}\label{p:3uncontractibles}
Let $G\in \fP_k$, for $k=1, 2$ or 3, be uncontractible with no interior vertex. If there exists a vertex $v_1\in V(G)$ with $\operatorname{deg}_h(v_1)=1$ then $G$ is 
one of the surfaces graphs $ G^1_{6,\alpha}, G^1_{6,\beta}, G^1_5$. 
%of Figure \ref{f:3more}.
\end{prop}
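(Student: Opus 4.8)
The plan is to read off the whole graph from the fan of faces around $v_1$, using uncontractibility to force the non-facial edges and the bound of at most three holes to terminate the construction. Since $\operatorname{deg}_h(v_1)=1$, the vertex $v_1$ meets exactly one hole $h_0$, at a single corner; writing its neighbours in cyclic order as $w_1,w_2,\dots,w_n$, with $w_1,w_n$ the two neighbours lying on $\partial h_0$, the hypothesis supplies the rim edges $w_1w_2,\dots,w_{n-1}w_n$ and makes $v_1w_2,\dots,v_1w_{n-1}$ the $FF$ edges incident to $v_1$. As $G$ has no interior vertex, every vertex meets a hole and $h(G)=1$; the interior-vertex alternative is already excluded by Proposition \ref{p:interiorvertex}.

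I would then split on the boundary length of $h_0$. If $h_0$ is the $4$-cycle $v_1v_2v_3v_4v_1$ (so $w_1=v_2$, $w_n=v_4$) then Lemma \ref{degh1lemma} applies verbatim and gives $\operatorname{deg}(v_1)=4$ when $v_1\not\sim v_3$ and $\operatorname{deg}(v_1)=5$ when $v_1\sim v_3$. If $h_0$ is the $5$-cycle hole then $k=2$, and a parallel fan analysis (not covered by the lemma as stated) is required. If $h_0$ is the $6$-cycle hole then $k=1$, and I expect this case to be impossible: running the argument of Lemma \ref{degh1lemma}, each interior $FF$ edge $v_1w_i$ is forced onto a non-facial $3$-cycle, and with only one hole available this produces a critical cycle strictly containing $h_0$ or demands a second hole, contradicting either Lemma \ref{l:m-holelemma} (no critical cycle properly contains a hole in an uncontractible graph) or $k=1$.

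In the $4$-cycle case with $\operatorname{deg}(v_1)=4$ the neighbours are exactly $v_2,w_2,w_3,v_4$, so the third vertex of the non-facial $3$-cycle demanded on each of $v_1w_2,v_1w_3$ is forced: $v_1w_2$ must close through $v_4$ and $v_1w_3$ through $v_2$, creating the edges $w_2v_4$ and $w_3v_2$. Imposing $f(G)=6$ together with minimum degree $3$ at the remaining corner $v_3$ then pins $G$ down to a graph on the six vertices $v_1,v_2,v_3,v_4,w_2,w_3$, and tracking the three edges missing from $K_6$ identifies the deletion pattern as one of the $h=1$ ones, giving $G^1_{6,\alpha}$ or $G^1_{6,\beta}$. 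The subcase $\operatorname{deg}(v_1)=5$, in which $v_1v_3$ is a non-planar edge joining opposite corners of the hole, is handled the same way and accounts for the second $6$-vertex possibility. For the $5$-cycle hole the identical forcing applies, where the extra boundary edge lowers the freedom count so that the graph closes on five vertices, producing $G^1_5=K_5-e$; throughout, Lemma \ref{l:m-holelemma} is invoked to discard any cycle properly containing a hole and Lemma \ref{l:obstacle2} to keep the relevant subgraphs inside $\fP_l$.

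The main obstacle is completeness rather than any single estimate: at each step one must check that the $3$-cycles required by uncontractibility are genuinely forced, that no admissible local configuration has been overlooked, and --- the truly projective-plane feature --- that the planar-versus-non-planar status of each newly created edge is tracked correctly, since this is what keeps the number of holes within $\{2,3\}$ and distinguishes the two $6$-vertex graphs. The cleanest way to guarantee termination is to show, as in the proof of Lemma \ref{degh1lemma}, that every interior rim edge $w_iw_{i+1}$ either fails to be $FF$ or spawns a cycle strictly containing a hole; this bounds $n$, forces the graph to close at five or six vertices, and reduces the final identification to a finite check against the $h=1$ depletions of $K_6$ and $K_5$.
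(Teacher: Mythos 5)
Your proposal follows essentially the same route as the paper: a fan analysis around $v_1$, a case split on the boundary length of the incident hole, Lemma~\ref{degh1lemma} to pin down $\deg(v_1)$, and uncontractibility to force the remaining non-facial edges before closing the graph with the freedom count. The one place the paper does substantially more work than your sketch is in certifying that no further vertex can occur in the 4-hole, degree-4 subcase (the argument that $v_3w_2$ or $v_3w_3$ must exist, ruling out an extra vertex $z$), but your outline, use of the key lemmas, and conclusions all match.
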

\begin{proof}

%\begin{description}\item[(a)]
{\bf Case (a).} 
Assume first that $v_1$ lies on the 4-cycle boundary of the hole $H_1$, with  vertices $v_1, v_2, v_3, v_4$, and let $v_2=w_1, w_2,\dots, w_n=v_4$ be all the neighbours of $v_1$. Since $\operatorname{deg}_h(v_1)=1$ the edges $w_1w_2,\dots,w_{n-1}w_n$ exist. 
% ?? UNNECESSARY Also $\operatorname{deg}(v_1)\geq 4$ since otherwise $v_1w_2$ is a contractible $FF$  edge. 
There are two subcases.

\begin{description}
\item[(i)] $v_3\neq w_i$, for every $i\in\{2,\dots, n-1\}$.\\
By Lemma \ref{degh1lemma} we have $\operatorname{deg}(v_1)=4$.
By the uncontractibility of the edges $v_1w_2$ and $v_1w_3$ the edges $w_2w_4$ and $w_1w_3$  must exist. Thus $G$ contains the surface graph  in Figure \ref{f:1_6_alpha}, except possibly for the edge $v_3w_3$.
It follows that the 4-cycle $w_1w_2w_4w_3w_1$ must be the boundary of a 4-hole $H_2$, since otherwise the 5-cycle $v_1w_1w_3w_2w_4v_1$ contains all the holes, in the sense, as before, of being the boundary of an embedded disc in $\P$, $B$ say, which contains the holes. This contradicts $(3,6)$-tightness. We claim now that the edge $v_3w_2$ or $v_3w_3$ must exist, for otherwise there is a contractible edge in $B$. To see this check that since $\operatorname{deg}(v_3)\geq 3$ there exists a vertex $z$ in the interior of the 5-cycle $v_3w_4w_2w_3w_1v_3$, such that $v_3z\in E(G)$. Since $v_3z$ does not lie on a non facial 3-cycle, it follows that it lies on the boundary of the third 4-hole. Thus, if $v_3w_3$ is not allowed, we may assume by symmetry that $w_1z$ is an $FF$ edge in $E(G)$, and so it lies on the non-facial 3 cycle $w_1zw_2w_1$. Hence the third hole is described by the 4-cycle $w_4v_3zw_1w_4$. However, this implies that $zw_3\in E(G)$, which is a contractible $FF$ edge, so we have proved the claim. Hence without loss of generality $G$ contains the surface subgraph $G^1_{6,\alpha}$ as  indicated in Figure \ref{f:1_6_alpha}. Since $G$ is uncontractible it follows that the 3-cycle $v_3w_3w_1$ is a face and so $G=G^1_{6,\alpha}$.

%The resulting triangulated surface graph has a third 4-hole $H_3$ and it is an uncontractible (3,6)-tight graph (\textcolor[rgb]{0,0,1}{see figure}).

%The resulting triangulated surface graph has a third 4-hole $H_3$ and it is an uncontractible (3,6)-tight graph (\textcolor[rgb]{0,0,1}{see figure}).

\begin{figure}[h!]
\centering
\begin{tikzpicture}
\path [fill=red3] (0.5,3.7) -- (2,3) -- (2.75,1.701) -- (2.5,1) -- (0.5,1) -- (0.5,3.7)  ;
\path [fill=red3] (6.5, 2.3) -- (4.75,1.701) --(4.75,4.299) --(4.9,5)--(6.5,5)--(6.5,2.3);
\path [fill=red1] (2.75,1.701) -- (3.5,3) -- (2.75,4.299)--(2,3)-- (2.75,1.701);
\path [fill=red2] (2.75,4.299)--(3,5)--(4.9,5)--(4.75,4.299)-- (2.75,4.299);
\path [fill=red2] (4.5,1)--(4.75,1.701) -- (2.75,1.701) -- (2.5,1) -- (4.5,1) ;
\path [fill=green6] (6.5, 2.3) -- (4.75,1.701) --(4.5,1) --(6.5,1)--(6.5,2.3); ;
\path [fill=green6] (0.5,3.7) -- (2,3) -- (2.75,4.299) -- (3,5) -- (0.5,5) -- (0.5,3.7) ;
\path [fill=green1] (2.75,4.299) -- (3.5,3) -- (4.75,4.299) -- (2.75,4.299) ;
\path [fill=green3]  (4.75,4.299) -- (3.5,3) -- (4.75,1.701) -- (4.75,4.299) ;
\path [fill=green4] (4.75,1.701) -- (3.5,3) -- (2.75,1.701) -- (4.75,1.701) ;

\draw [red, thick](2.75,4.299)--(3,5);
\draw [red, thick](4.5,1)--(4.75,1.701);
\draw[green, thick](4.75,4.299)-- (4.9,5);
\draw[green, thick](2.5,1)--(2.75,1.701);
\draw [blue, thick](2,3)--(0.5,3.7);
\draw [blue, thick](6.5, 2.3)--(4.75,1.701);

\draw [->, thick] (0.5,1) -- (4,1);
\draw [thick](4,1) -- (6.5,1);
\draw [<-, thick] (3, 5) -- (6.5,5);
\draw [thick](0.5,5) -- (3,5);
\draw [->, thick] (6.5,1) -- (6.5,3);
\draw [thick](6.5,3) -- (6.5,5);
\draw [<-, thick] (0.5, 2) -- (0.5,5);
\draw [thick](0.5,1) -- (0.5,2);

\draw [thick](2.75,1.701)--(4.75,1.701)--(4.75,4.299)--(2.75,4.299)--(2,3)--(2.75,1.701);
\draw [thick](2.75,1.701) -- (3.5,3) -- (2.75,4.299);
\draw [thick](3.5,3) -- (4.75,1.701);
\draw [thick](3.5,3) -- (4.75,4.299);
%\draw [thick](3.5,3) -- (4.75,3);
%\draw [thick](3.5,3) -- (3.5,1.701);
%\draw [thick](3.5,3) -- (4.75,4.299);
\draw [fill] (3.5,3) circle [radius=0.05];
\node [right] at (3.7,2.8) {$v_1$};
\draw [fill] (2.75,1.701) circle [radius=0.05];
\node [below] at (2.75,1.7) {$w_{4}$};
%\draw [fill] (3.5,1.701) circle [radius=0.05];
%\node [above right] at (3.4,1.7) {$w_{n-1}$};
\draw [fill] (4.75,1.701) circle [radius=0.05];
\node [right] at (4.6,1.3) {$w_{3}$};
\draw [fill] (4.75,4.299) circle [radius=0.05];
\node [right] at (4.75,4.3) {$w_{2}$};
%\draw [fill] (3.5,4.299) circle [radius=0.05];
%\node [above right] at (3.5,4.299) {$w_{i}$};
%\draw [fill] (3.1,4.299) circle [radius=0.05];
%\node [above] at (3,4.299) {$w_{i-1}$};
\draw [fill] (2.75,4.299) circle [radius=0.05];
\node [left] at (2.7,4.299) {$w_{1}$};
\draw [fill] (2,3) circle [radius=0.05];
\node [left] at (2,3) {$v_{3}$};
%\draw [fill] (4.75,3) circle [radius=0.05];

\end{tikzpicture}
\caption{The uncontractible surface graph $G^1_{6,\alpha}$.}\label{f:1_6_alpha}
\end{figure}

\item[(ii)]  $v_3=w_{i_0}$ for some $i_0\in \{3,\dots,n-2\}$.\\
By Lemma \ref{degh1lemma}  $\operatorname{deg}(v_1)=5$ and so $v_3=w_3$. Since $v_1w_2$ is an $FF$  edge, it follows that $w_2w_4\in E(G)$ and so $G$ contains the surface graph
$G=G^1_{6,\beta}$ of Figure \ref{f:1_6_beta}.
Since $G$ is uncontractible it follows as before  
%{\color{red}similarly ??} 
that it is equal to $G$.

%Moreover, since every vertex of the form $w_iw_{i+1}$, $i=1,2,3,4$, is on the boundary of a hole, the edge $w_2v_3$ does not exist. Since $v_1w_2$ is an $FF$  edge, it follows that $w_2w_4\in E(G)$ and that $G=G^1_{6,\beta}$ as in Figure \ref{f:1_6_beta}. 

\begin{figure}[h!]
\centering
\begin{tikzpicture}
\path [fill=red1] (2,3)--(2.75,1.701)--(3.5,3)--(2.75,4.299)--(2,3);
\path [fill=red2] (2,1.701)--(2,3)--(2.75,1.701)--(6.5,1.701)--(6.5,1)--(3.25,1)-- (2,1.701);
\path [fill=red2] (2,4.299)--(3.25,5)--(0.5,5)--(0.5,4.299)--(2,4.299) ;
\path [fill=red3] (2,4.299)--(2,3)--(2.75,4.299)--(6.5,4.299)--(6.5,5)--(3.25,5)-- (2,4.299);
\path [fill=red3] (2,1.701)--(3.25,1)--(0.5,1)--(0.5,1.701)--(2,1.701) ;
%\path [fill=green6] (6.5, 2.3) -- (4.75,1.701) --(4.5,1) --(6.5,1)--(6.5,2.3); ;
%\path [fill=green6] (0.5,3.7) -- (2,3) -- (2.75,4.299) -- (3,5) -- (0.5,5) -- (0.5,3.7) ;
\path [fill=green1] (2.75,4.299) -- (3.5,3) -- (6.5,4) -- (6.5,4.299) -- (2.75,4.299);
\path [fill=green2] (2.75,1.701) -- (3.5,3) -- (6.5,2) -- (6.5,1.701) -- (2.75,1.701);
\path [fill=green3] (3.5,3) -- (6.5,3) -- (6.5,4) -- (3.5,3);
\path [fill=green4] (3.5,3) -- (6.5,3) -- (6.5,2) -- (3.5,3);
\path [fill=green1] (0.5,1.701) -- (2,1.701) -- (0.5,2) -- (0.5,1.701);
\path [fill=green2] (0.5,4.299) -- (2,4.299) -- (0.5,4) -- (0.5,4.299);
\path [fill=green3] (0.5,2) -- (2,1.701) -- (2,3) -- (0.5,3)--(0.5,2);
\path [fill=green4] (0.5,4) -- (2,4.299) -- (2,3) -- (0.5,3)--(0.5,4);

\draw [->, thick] (0.5,1) -- (4.5,1);
\draw [thick](4.5,1) -- (6.5,1);
\draw [<-, thick] (2.5, 5) -- (6.5,5);
\draw [thick](0.5,5) -- (2.5,5);
\draw [->, thick] (6.5,1) -- (6.5,2.5);
\draw [thick](6.5,2.5) -- (6.5,5);
\draw [<-, thick] (0.5, 2.5) -- (0.5,5);
\draw [thick](0.5,1) -- (0.5,2.5);

\draw [thick](2,3)--(2.75,1.701)--(3.5,3)--(2.75,4.299)--(2,3);
\draw [thick](2.75,1.701)--(6.5,1.701);
\draw [thick](2.75,4.299)--(6.5,4.299);
\draw [thick](2,3)--(2,4.299)--(0.5,4.299);
\draw [thick](2,3)--(2,1.701)--(0.5,1.701);
\draw [thick](6.5,2) -- (3.5,3) -- (6.5,4);
\draw [thick](3.5,3) -- (6.5,3);
\draw [thick](0.5,4) -- (2,4.299);
\draw [thick](0.5,2) -- (2,1.701);
\draw [thick](0.5,3) -- (2,3);
\draw [blue, thick](3.25,5) -- (2,4.299);
\draw [blue, thick](3.25,1) -- (2,1.701);

\draw [fill] (3.5,3) circle [radius=0.05];
\node [below] at (3.7,2.8) {$v_1$};
\draw [fill] (2.75,1.701) circle [radius=0.05];
\node [below] at (2.75,1.7) {$v_{4}$};
\draw [fill] (2,1.701) circle [radius=0.05];
\node [above left] at (2,1.701) {$w_{4}$};
%\draw [fill] (4.75,1.701) circle [radius=0.05];
%\node [right] at (4.6,1.3) {$w_{3}$};
\draw [fill] (2,4.299) circle [radius=0.05];
\node [below left] at (2,4.299) {$w_{2}$};
%\draw [fill] (3.5,4.299) circle [radius=0.05];
%\node [above right] at (3.5,4.299) {$w_{i}$};
%\draw [fill] (3.1,4.299) circle [radius=0.05];
%\node [above] at (3,4.299) {$w_{i-1}$};
\draw [fill] (2.75,4.299) circle [radius=0.05];
\node [above right] at (2.75,4.299) {$v_{2}$};
\draw [fill] (2,3) circle [radius=0.05];
\node [right] at (2,3) {$v_{3}$};
%\draw [fill] (4.75,3) circle [radius=0.05];

\end{tikzpicture}
\caption{The uncontractible surface graph $G^1_{6,\beta}$.}\label{f:1_6_beta}
\end{figure}

\end{description}

%\item[(b)]
{\bf Case (b).} 
Let $v_1$ lie on the boundary of a 5-hole $H$ with boundary edges $v_1v_2$, $v_2v_3$, $v_3v_4$, $v_4v_5$, $v_5v_1$. We may assume that $\operatorname{deg}_h(v_i)=2$, for every $i=2,3,4,5$, since otherwise there is a vertex $v$ on a 4-hole of $G$.
%, that satisfies $\operatorname{deg}_h(v)=1$, so we get a contradiction by the first part of the proof. 
%Thus $|V(G)|=5$.  Moreover, since the interior of the complement of the disc described by the hole $H$ is not empty, it follows that $\operatorname{deg}(v_1)>2$. 
Since $G$ has two holes it is straightforward to check that $\operatorname{deg}(v_1)=4$ and that the second hole is described by the 4-cycle $v_2v_3v_5v_4v_2$. Thus we obtain that $G$ is the uncontractible (3,6)-tight graph $G_5^1$ given by Figure \ref{f:1_5}.

\begin{figure}[h!]
\centering
\begin{tikzpicture}
\path [fill=red1] (2,3)--(2.75,1.701)--(4,1.701)--(4,4.299)--(2.75,4.299)--(2,3);
\path [fill=red2] (0.5,1.701)--(6.5,1.701)--(6.5,1)--(0.5,1)-- (0.5,1.701);
\path [fill=red2] (0.5,4.299)--(6.5,4.299)--(6.5,5)--(0.5,5)--(0.5,4.299) ;
\path [fill=green1] (4,4.299) -- (6.5,4.299) -- (6.5,3.5) -- (4,4.299);
\path [fill=green2] (4,4.299) -- (6.5,3.5) -- (6.5,2.5) -- (4,1.701) -- (4,4.299);
\path [fill=green3] (4,1.701) -- (6.5,1.701) -- (6.5,2.5) -- (4,1.701);;
%\path [fill=green4] (3.5,3) -- (6.5,3) -- (6.5,2) -- (3.5,3);
\path [fill=green1] (2.75,1.701) -- (0.5,1.701) -- (0.5,2.5)--(2,3) -- (2.75,1.701);
\path [fill=green2] (2,3) -- (0.5,2.5) -- (0.5,3.5) -- (2,3);
\path [fill=green3] (2.75,4.299) -- (0.5,4.299) -- (0.5,3.5)--(2,3) -- (2.75,4.299);
%\path [fill=green4] (0.5,4) -- (2,4.299) -- (2,3) -- (0.5,3)--(0.5,4);

\draw [->, thick] (0.5,1) -- (4.5,1);
\draw [thick](4.5,1) -- (6.5,1);
\draw [<-, thick] (2.5, 5) -- (6.5,5);
\draw [thick](0.5,5) -- (2.5,5);
\draw [->, thick] (6.5,1) -- (6.5,2.5);
\draw [thick](6.5,2.5) -- (6.5,5);
\draw [<-, thick] (0.5, 2.5) -- (0.5,5);
\draw [thick](0.5,1) -- (0.5,2.5);

\draw [thick](2,3)--(2.75,1.701)--(4,1.701)--(4,4.299)--(2.75,4.299)--(2,3);
%\draw [thick](2.75,1.701)--(2.75,1);
%\draw [thick](2.75,4.299)--(2.75,5);
%\draw [thick](4,1.701)--(4,1);
%\draw [thick](4,4.299)--(4,5);
\draw [thick](0.5,1.701)--(6.5,1.701);
\draw [thick](0.5,4.299)--(6.5,4.299);
\draw [thick](0.5,2.5) -- (2,3);
\draw [thick](0.5,3.5) -- (2,3);
\draw [thick](4,4.299) -- (6.5,3.5);
\draw [thick](4,1.701) -- (6.5,2.5);

\draw [fill] (2,3) circle [radius=0.05];
\node [right] at (2,3) {$v_1$};
\draw [fill] (2.75,1.701) circle [radius=0.05];
\node [above right] at (2.75,1.7) {$v_{5}$};
\draw [fill] (4,1.701) circle [radius=0.05];
\node [above left] at (4,1.701) {$v_{4}$};
\draw [fill] (4,4.299) circle [radius=0.05];
\node [below left] at (4,4.299) {$v_{3}$};
\draw [fill] (2.75,4.299) circle [radius=0.05];
\node [below right] at (2.75,4.299) {$v_{2}$};

\end{tikzpicture}
\caption{The uncontractible surface graph $G^1_5$.}\label{f:1_5}
\end{figure}
\end{proof}

%NOT CLEAR Note that in the proof of the previous result we have determined the uncontractible $(3,6)$-tight surface graphs for $\P$ with in 2-holed case and shown that there is a unique uncontractible surface graph, namely $G_5^1$. 

%PREMATURE:The next proposition completes the proof that there are 8 base graphs.

\begin{prop}
Let $G\in \fP_k$, for $k=1, 2$ or 3, be uncontractible with
%Let $G\in \P$ be an uncontractible and (3,6)-tight  with 
$\operatorname{deg}_h(v)\geq 2$ for all $v\in V(G)$.  Then $G$ is one of the four surface graphs $G^2_{6,\alpha}, G^2_{6,\beta}, G^3_4, G_3^2$.
\end{prop}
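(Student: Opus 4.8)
The plan is to run a single global counting argument to bound $|V|$, then split on the number of holes $k$, using the earlier classification results to kill the degenerate cases and reserving the real work for $k=3$. The key input is the identity $\sum_{v\in V}\deg_h(v)=L_k$, where $L_k$ is the total length of the hole boundaries. Reading off the hole data for $\P$-graphs satisfying the Maxwell count (one $6$-cycle when $k=1$, a $5$-cycle together with a $4$-cycle when $k=2$, three $4$-cycles when $k=3$) gives $L_1=6$, $L_2=9$, $L_3=12$. The hypothesis $\deg_h(v)\ge 2$ for all $v$ rules out interior vertices (these have $\deg_h=0$, so Proposition \ref{p:interiorvertex} and the graph $G^0_7$ never intervene) and yields $2|V|\le L_k$, i.e. $|V|\le 3,4,6$ according as $k=1,2,3$. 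Since the $(3,6)$-tight graphs on $n\le 6$ vertices are exactly $K_3$, $K_4$, $K_5-e$ and the four depletions of $K_6$ by three edges (as recorded after Definition \ref{d:holedegree}), the problem is reduced to a finite checklist.

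For $k=1$ the bound forces $|V|=3$, so $G=K_3=G^2_3$, with each vertex lying twice on the $6$-cycle hole. For $k=2$ the cleanest route is to invoke the determination already carried out inside the proof of Proposition \ref{p:3uncontractibles}: the unique uncontractible graph in $\fP_2$ is $G^1_5$, which has a vertex of hole incidence degree $1$ and therefore violates the present hypothesis, so no graph survives and $k\neq 2$. (Alternatively one argues directly: $2|V|\le 9$ and $|V|=3$ is excluded since $\sum\deg<9$, forcing $|V|=4$ and $G=K_4$; but a $5$-cycle hole boundary on only four vertices must repeat a vertex, and such a configuration cannot be realised as a surface graph.)

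The substance is the case $k=3$, where $3\le |V|\le 6$. If $|V|=4$ then $G=K_4$ and $\sum\deg_h=12=\sum\deg$ forces every edge to be exposed, giving $G^3_4$. If $|V|=6$ then $\sum\deg_h=12=2|V|$ forces $\deg_h(v)=2$ for every vertex, so each vertex lies on exactly two of the three $4$-holes; the pairwise sharing count (each of the $\binom{3}{2}$ pairs of holes sharing exactly two vertices) rigidly constrains the incidence pattern and, matched against the four $K_6$-depletions, isolates precisely $G^2_{6,\alpha}$ and $G^2_{6,\beta}$. The other two depletions are excluded because their only uncontractible realisations carry a vertex of hole incidence degree $1$ (they are the already-classified $G^1_{6,\alpha},G^1_{6,\beta}$, which have $h=1$). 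The remaining value $|V|=5$, where $G$ would be $K_5-e$, must be eliminated. (One may organise all of this by splitting on $h(G)$: $h(G)=3$ forces $\sum\deg_h\ge 3|V|$, hence $|V|\le4$ and $G=G^3_4$, while $h(G)=2$ supplies a vertex with $\deg_h=2$ whose neighbourhood can be analysed as in Lemma \ref{degh1lemma}.)

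The main obstacle is exactly the exclusion of $K_5-e$, since it passes every crude test: its Euler count is correct ($5-9+2+3=1$, the value for $\P$), it admits assignments of two triangular faces and three $4$-holes with all $\deg_h\ge 2$, and its single $FF$ edge sits on a nonfacial triangle so uncontractibility is not contradicted. The obstruction must therefore come from the surface structure itself. I would extract it by showing that every such face-and-hole assignment fails the manifold condition at some vertex, whose link degenerates into a disconnected union of bigons rather than a single cycle; equivalently, and in keeping with the paper's methods, by exhibiting a critical $4$-cycle that properly contains a $4$-hole and invoking Lemma \ref{l:m-holelemma}, or a contractible edge via Lemma \ref{l:degree3lemma}. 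The same surface-realisability bookkeeping is what distinguishes the two admissible $6$-vertex graphs $G^2_{6,\alpha}$ and $G^2_{6,\beta}$.
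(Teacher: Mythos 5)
Your global counting identity $\sum_{v}\deg_h(v)=L_k$, where $L_k$ is the total length of the hole boundary walks, is correct and is a genuinely different organising principle from the paper's, which splits instead on facial structure (an $FF$ edge exists / a face but no $FF$ edge / no face / one hole). It buys a one-line disposal of the one-hole case ($2|V|\le 6$ forces $G=K_3$), which the paper reaches only after a lengthy Case (d), and it cleanly reproduces the bound $|V|\le 6$. But the proof is not complete where the real content lies. The elimination of $K_5-e$ (the case $|V|=5$, $k=3$), which you yourself flag as ``the main obstacle'', is never carried out: ``I would extract it by showing\ldots'' is a plan, not an argument, and none of the three mechanisms you list (a degenerate vertex link, Lemma \ref{l:m-holelemma}, Lemma \ref{l:degree3lemma}) is shown to apply to any, let alone every, face-and-hole assignment on $K_5-e$. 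The paper does this work explicitly: its Case (a) forces a sixth vertex from $\deg_h(v_2)\ge 2$, and its Case (b) contains a dedicated argument (Figure \ref{f:case_b_noFF}) that $|V|\ne 5$. Likewise, for $|V|=6$ the claim that the ``pairwise sharing count'' rigidly constrains the incidence pattern is not right as stated: the count gives $\sum_{\text{pairs}}|\text{shared vertices}|=6$, i.e.\ an average of two shared vertices per pair of holes, not exactly two for each pair; and the exclusion of the other two $K_6$-depletions is asserted (``their only uncontractible realisations carry a vertex of hole incidence degree~1'') rather than proved.

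Two smaller points. Disposing of $k=2$ by citing the remark after Proposition \ref{p:3uncontractibles} (that $G_5^1$ is the unique uncontractible graph in $\fP_2$) is circular: that uniqueness is only complete once the present proposition has handled the subcase $\deg_h\ge 2$ everywhere. Your alternative direct argument is the right move, but it has its own gap: a hole boundary is in general a closed walk that may revisit vertices (as in $G_3^2$, where the length-6 walk traverses each edge of $K_3$ twice), so ``a 5-cycle hole boundary on four vertices cannot be realised as a surface graph'' needs an argument, and for the same reason the $|V|=6$ analysis cannot silently assume each 4-hole has four distinct vertices. In short, the skeleton is attractive and in places superior to the paper's, but the $|V|=5$ and $|V|=6$ cases --- where the paper spends essentially all of its effort --- are left as declarations of intent.
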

\begin{proof} 
Suppose first that
$G$ has 2 or 3 holes. Then the hole boundaries have length 4 or 5 and it follows from the simplicity of $G$ that every vertex is common to at least 2 holes. Since there are either 2 or 3 holes it follows readily that $|V|\leq 6$.

{\bf Case (a).} Suppose that $G$ contains at least one $FF$  edge, say $v_1v_2$, with non facial  3-cycle $v_1v_2v_3$, and associated 3-cycle faces  $v_1v_2v_4v_1$ and $v_1v_2v_5v_1$.
%{\color{red}DONE TO HERE}
We claim that one of the edges $v_3v_4$ or $v_3v_5$ lies in $E(G)$. Suppose, by way of contradiction, that neither edge exists. Then we show that the edge $v_4v_5$ is also absent. Indeed, if $v_4v_5\in E(G)$, then we have two planar 5-cycles; $v_1v_4v_5v_2v_3v_1$ and $v_1v_5v_4v_2v_3v_1$, as in Figure \ref{f:two5cycles}. 
\begin{center}
\begin{figure}[ht]
\centering
\includegraphics[width=4.5cm]{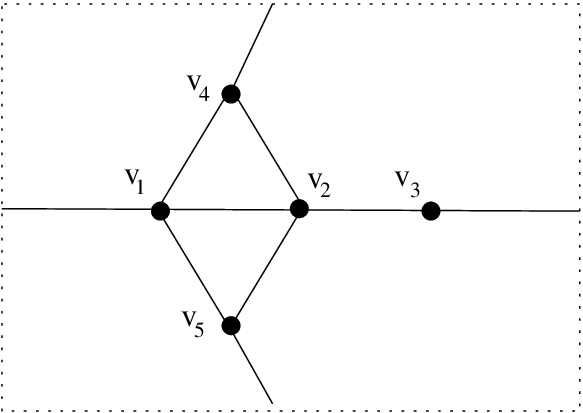}
\caption{A subgraph with the 5-cycles $v_1v_4v_5v_2v_3v_1$ and $v_1v_5v_4v_2v_3v_1$.}
\label{f:two5cycles}
\end{figure}
\end{center}
By the sparsity condition one of these has a vertex in the interior with 3 incident edges and the other has a single chordal edge in the interior and by symmetry we may assume that the planar 5-cycle $v_1v_4v_5v_2v_3v_1$  has the single chordal edge. However, of the 5 possibilities $v_1v_2, v_2v_4, v_1v_5$ are not available, by the simplicity of $G$, and the edges $v_3v_4, v_3v_5$ are absent by assumption. This contradiction shows that $v_4v_5$ is indeed absent and so, since $v_4, v_5$ have degree at least 2, the edges $v_4v_6, v_5v_6$ must exist. Now the complement of the 2 3-cycle faces is bounded by two 6-cycles. By the sparsity condition there are now only 2 further edges to add and so there must be a 5-cycle hole, a contradiction, and so the claim holds.

Without loss of generality we suppose that $v_3v_4\in E(G)$. Since $\operatorname{deg}_h(v_2)\geq 2$, it follows that
$v_2v_6\in E(G)$. Moreover, the edges $v_6v_2$,$v_2v_3$ should be on the boundary of a planar 4-hole $H_1$, and this implies that $v_1v_6\in E(G)$. Similarly we obtain that the two remaining holes are determined by the cycles $v_1v_3v_4v_5v_1$, and $v_2v_5v_4v_6v_2$. The resulting (3,6)-tight triangulated surface graph is given in Figure \ref{f:CaseAholedegree2} and is the uncontractible surface graph $G_{6,\alpha}^2$.

\begin{figure}[h!]
\centering
\begin{tikzpicture}
\path [fill=red1] (0.5,1)--(0.5,1.35)--(2,1.7)--(3.5,1.7)--(3.5,1)--(0.5,1);
\path [fill=red1] (3,5,5)--(3.5,4.3)--(4.25,3)--(6.5,4.65)--(6.5,5)--(3.5,5);
\path [fill=red2] (0.5,5)--(3.5,5)--(3.5,4.3)--(2.75,3)-- (0.5,4.65)--(0.5,5);
\path [fill=red2] (3.5,1)--(3.5,1.7)--(5,1.7)--(6.5,1.35)--(6.5,1)--(3.5,1);
\path [fill=red3] (0.5,1.35)--(2,1.7)--(2.75,3)--(0.5,4.65)--(0.5,1.35);
\path [fill=red3] (6.5,1.35)--(5,1.7)--(4.25,3)--(6.5,4.65)--(6.5,1.35);
\path [fill=green1] (2,1.7) -- (3.5,1.7) -- (2.75,3) -- (2,1.7);
\path [fill=green2] (3.5,1.7) -- (2.75,3)--(4.25,3)--(3.5,1.7);
\path [fill=green3] (4.25,3)--(3.5,1.7)--(5,1.7)--(4.25,3);
\path [fill=green4] (2.75,3)--(4.25,3)--(3.5,4.3)--(2.75,3);

\draw [->, thick] (0.5,1) -- (4.5,1);
\draw [thick](4.5,1) -- (6.5,1);
\draw [<-, thick] (2.5, 5) -- (6.5,5);
\draw [thick](0.5,5) -- (2.5,5);
\draw [->, thick] (6.5,1) -- (6.5,2.5);
\draw [thick](6.5,2.5) -- (6.5,5);
\draw [<-, thick] (0.5, 2.5) -- (0.5,5);
\draw [thick](0.5,1) -- (0.5,2.5);

\draw [thick](2,1.7)--(3.5,1.7)--(5,1.7);
\draw [thick](2,1.7)--(2.75,3)--(3.5,1.7)--(4.25, 3)--(5,1.7);
\draw [thick](2.75,3)--(4.25,3);
\draw [thick](2.75,3)--(3.5,4.3)--(4.25,3);
\draw [green, thick](5,1.7)--(6.5,1.35);
\draw [green, thick](0.5,4.65)--(2.75,3);
\draw [red, thick](0.5,1.35)--(2,1.7);
\draw [red, thick](4.25,3)--(6.5,4.65);
\draw [blue, thick](3.5,1)--(3.5,1.7);
\draw [blue, thick](3.5,4.3)--(3.5,5);
\draw [fill] (2,1.7) circle [radius=0.05];
\node [above left] at (2,1.7) {$v_5$};
\draw [fill] (3.5,1.7) circle [radius=0.05];
\node [below right] at (3.5,1.7) {$v_{2}$};
\draw [fill] (5,1.7) circle [radius=0.05];
\node [above right] at (5,1.7) {$v_{3}$};
\draw [fill] (2.75,3) circle [radius=0.05];
\node [left] at (2.75,3) {$v_{1}$};
\draw [fill] (4.25,3) circle [radius=0.05];
\node [right] at (4.25,3) {$v_{4}$};
\draw [thick] (3.5,4.3) circle [radius=0.05];
\node [above right] at (3.5,4.3) {$v_{6}$};

\end{tikzpicture}
\caption{The uncontractible surface graph with $h(G)=2$ and an $FF$ edge; $G_{6,\alpha}^2$.}\label{f:CaseAholedegree2}
\end{figure}

{\bf Case (b).} Suppose now $G$ has at least one 3-cycle face, $v_1v_2v_3$, and no $FF$  edges. 
%Let $v_1v_2v_3v_1$ be the 3-cycle that describes this face. 
Then the edge $v_1v_2$ is on the boundary of a 4-hole $H_1$, that is determined by the edges  
$v_1v_2$, $v_2v_4$, $v_4v_5$ and $v_5v_1$.  

To see that $|V|\neq 5$ note that without loss of generality the edge $v_3v_4$ exists and $G$ contains the surface subgraph shown in Figure \ref{f:case_b_noFF}. Also, since $v_5$ cannot have degree 2 at least one of the edges $v_5v_3, v_5v_2$ exists.
\begin{center}
\begin{figure}[ht]
\centering
\includegraphics[width=4.5cm]{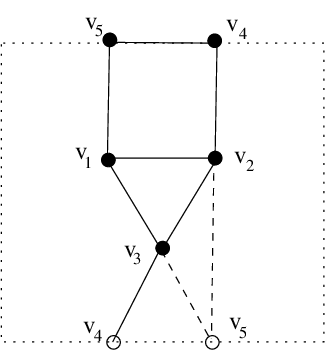}
\caption{A necessary subgraph.}
\label{f:case_b_noFF}
\end{figure}
\end{center}
If $v_5v_2$ exists then the edge $v_2v_3$ is adjacent to a 4-cycle hole and $v_5v_3$ is absent. We note next that the planar 5-cycle $v_3v_1v_5v_2v_4v_3$
must contain a chord edge (and so provide the third 4-cycle hole). The only available edge (by simplicity) is $v_3v_5$. This however is inadmissible since it introduces a second 3-cycle face $v_3v_5v_1$ adjacent to $v_1v_2v_3$.

Similarly, if $v_5v_3$ exists then we have the planar 6-cycle
$v_3v_1v_5v_3v_2v_4v_3$ and there must exist a diameter edge to create the 2 additional 4-cycle holes. As there is no such edge we conclude that $|V|=6$.

Introducing $v_6$ the fact that $v_2v_3$ and $v_3v_1$ lie on 4-cycle hole boundaries leads to the surface  graph $G_{6,\beta}^2$ indicated in Figure \ref{f:CaseBholedegree2}. 
%{\color{blue}ENOUGH ?}

\begin{figure}[h!]
\centering
\begin{tikzpicture}
\path [fill=red1] (2.5,4)--(2.5,5)--(4.5,5)--(4.5,4)--(2.5,4);
\path [fill=red2] (2.5,4)--(0.5,3)--(0.5,1)--(2.5,1)-- (3.5,2)--(2.5,4);
\path [fill=red3] (4.5,4)--(6.5,3)--(6.5,1)--(4.5,1)-- (3.5,2)--(4.5,4);
\path [fill=green1] (2.5,4) -- (4.5,4) -- (3.5,2) -- (2.5,4);
\path [fill=green2] (2.5,4)--(0.5,3)--(0.5,5)--(2.5,5)--(2.5,4);
\path [fill=green3] (4.5,4)--(6.5,3)--(6.5,5)--(4.5,5)--(4.5,4);
\path [fill=green4] (2.5,1)--(4.5,1)--(3.5,2)--(2.5,1);

\draw [->, thick] (0.5,1) -- (5,1);
\draw [thick](5,1) -- (6.5,1);
\draw [<-, thick] (1.5, 5) -- (6.5,5);
\draw [thick](0.5,5) -- (1.5,5);
\draw [->, thick] (6.5,1) -- (6.5,2.5);
\draw [thick](6.5,2.5) -- (6.5,5);
\draw [<-, thick] (0.5, 2.5) -- (0.5,5);
\draw [thick](0.5,1) -- (0.5,2.5);

\draw [thick](2.5,4)--(3.5,2)--(4.5,4)--(2.5,4);
\draw [thick](2.5,4)--(2.5,5);
\draw [thick](2.5,4)--(0.5,3);
\draw [thick](4.5,4)--(4.5,5);
\draw [thick](4.5,4)--(6.5,3);
\draw [thick](3.5,2)--(2.5,1);
\draw [thick](3.5,2)--(4.5,1);
\draw [green, thick](2.5,1)--(4.5,1);
\draw [green, thick](2.5,5)--(4.5,5);
\draw [red, thick](0.5,3)--(0.5,1)--(2.5,1);
\draw [red, thick](4.5,5)--(6.5,5)--(6.5,3);
\draw [blue, thick](6.5,3)--(6.5,1)--(4.5,1);
\draw [blue, thick](2.5,5)--(0.5,5)--(0.5,3);
\draw [fill] (2.5,4) circle [radius=0.05];
\node [above left] at (2.5,4) {$v_1$};
\draw [fill] (4.5,4) circle [radius=0.05];
\node [above right] at (4.5,4) {$v_{2}$};
\draw [fill] (3.5,2) circle [radius=0.05];
\node [below] at (3.5,2) {$v_{3}$};
\draw [fill] (4.5,5) circle [radius=0.05];
\node [below left] at (4.5,5) {$v_{4}$};
\draw [fill] (2.5,5) circle [radius=0.05];
\node [below right] at (2.5,5) {$v_{5}$};
\draw [thick] (0.5,3) circle [radius=0.05];
\node [above right] at (0.5,3.2) {$v_{6}$};
\draw [thick] (6.5,3) circle [radius=0.05];
\node [below left] at (6.5,3) {$v_{6}$};
\draw [thick] (2.5,1) circle [radius=0.05];
\node [above left] at (2.5,1) {$v_{4}$};
\draw [thick] (4.5,1) circle [radius=0.05];
\node [above right] at (4.5,1) {$v_{5}$};

\end{tikzpicture}
\caption{The uncontractible surface graph $G_{6,\beta}^2$, with $h(G)=2$, no $FF$ edge and a 3-cycle face.}\label{f:CaseBholedegree2}
\end{figure}

{\bf Case (c).} Let now $G$ be a surface graph with no 3-cycle faces. Since $\operatorname{deg}(v)\geq 3$ for each vertex it follows that $\operatorname{deg}_h(v)= 3$ and $\deg(v)=3$, for all $v\in V(G)$. Thus
$|V|=4$ and it follows that $G$ is the uncontractible (3,6)-tight surface graph $G_4^3$ given by Figure \ref{f:coloured4vertexgraph}. 

\begin{figure}[h!]
\centering
\begin{tikzpicture}
\path [fill=red1] (0.5,1)--(3.5,3)--(3.5,5)--(0.5,5)--(0.5,1);
\path [fill=red2] (6.5,1)--(3.5,3)--(3.5,5)--(6.5,5)--(6.5,1);
\path [fill=red3] (0.5,1)--(6.5,1)--(3.5,3)--(0.5,1);

\draw [->, thick] (0.5,1) -- (5,1);
\draw [thick](5,1) -- (6.5,1);
\draw [<-, thick] (1.5, 5) -- (6.5,5);
\draw [thick](0.5,5) -- (1.5,5);
\draw [->, thick] (6.5,1) -- (6.5,2.5);
\draw [thick](6.5,2.5) -- (6.5,5);
\draw [<-, thick] (0.5, 2.5) -- (0.5,5);
\draw [thick](0.5,1) -- (0.5,2.5);

\draw [thick](0.5,1)--(3.5,3)--(3.5,5);
\draw [thick](3.5,3)--(6.5,1);
\draw [green, thick](0.5,1)--(3.5,1);
\draw [green, thick](3.5,5)--(6.5,5);
\draw [red, thick](3.5,1)--(6.5,1);
\draw [red, thick](0.5,5)--(3.5,5);
\draw [blue, thick](0.5,1)--(0.5,5);
\draw [blue, thick](6.5,1)--(6.5,5);
\draw [fill] (3.5,3) circle [radius=0.05];
\node [above left] at (3.5,3) {$v_1$};
\draw [fill] (3.5,5) circle [radius=0.05];
\node [below left] at (3.5,5) {$v_{2}$};
\draw [fill] (6.5,5) circle [radius=0.05];
\node [below left] at (6.5,5) {$v_{3}$};
\draw [fill] (6.5,1) circle [radius=0.05];
\node [above left] at (6.2,0.9) {$v_{4}$};
\draw [thick] (3.5,1) circle [radius=0.05];
\node [above right] at (3.5,1) {$v_{2}$};
\draw [thick] (0.5,1) circle [radius=0.05];
\node [above right] at (0.4,1.2) {$v_{3}$};
\draw [thick] (0.5,5) circle [radius=0.05];
\node [below right] at (0.5,5) {$v_{4}$};
\end{tikzpicture}
\caption{The uncontractible surface graph  $G_4^3$ with $h(G)=3$.}\label{f:coloured4vertexgraph}
\end{figure}

{\bf Case (d).}
Finally, suppose that $G\in \mathfrak{P}_1$. We claim that the $G$  has no faces and the surface graph is given by 
Figure \ref{f:G3,2}.

Assume first that there exists an $FF$ edge, say $v_1v_2$, that lies on the faces $v_1v_2v_3v_1$ and $v_1v_2v_4v_1$. By the uncontractibility,  $v_1v_2$ lies on a non facial 3-cycle $v_1v_2v_5v_1$. Note that $v_3v_4\notin E(G)$, since otherwise the 6-hole would lie inside a 5-cycle, either $v_1v_3v_4v_2v_5v_1$ or $v_1v_4v_3v_2v_5v_1$, contradicting the sparsity requirement. It follows that we cannot have $|V(G)|\leq 5$.
Indeed, in this case (see Figure \ref{f:lessthan5}) $v_3v_5\in E(G)$, since $\operatorname{deg}(v_3)\geq 3$, and so without loss of generality, in view of the symmetry,  $v_1v_3$ is an $FF$ edge. But this edge does not lie on a non-facial 3-cycle, a contradiction. 

\begin{figure}[h!]
\centering
\begin{tikzpicture}

\draw [->, thick] (0.5,1) -- (4.5,1);
\draw [thick](4.5,1) -- (6.5,1);
\draw [<-, thick] (2.5, 5) -- (3.45,5);
\draw [thick] (3.55,5)--(6.5,5);
\draw [thick](0.5,5) -- (2.5,5);
\draw [->, thick] (6.5,1) -- (6.5,2.5);
\draw [thick](6.5,2.5) -- (6.5,5);
\draw [<-, thick] (0.5, 2.5) -- (0.5,5);
\draw [thick](0.5,1) -- (0.5,2.5);

\draw [thick](3.5,2)--(3.5,4);
\draw [thick](3.5,2)--(2.5,3)--(3.5,4);
\draw [thick](3.5,2)--(4.5,3)--(3.5,4);
\draw [thick](3.5,1)--(3.5,2);
\draw [thick](3.5,4)--(3.5,4.95);

\draw [fill] (3.5,4) circle [radius=0.05];
\node [above right] at (3.5,4) {$v_1$};
\draw [fill] (3.5,2) circle [radius=0.05];
\node [below right] at (3.5,2) {$v_{2}$};
\draw [fill] (2.5,3) circle [radius=0.05];
\node [right] at (2.5,3) {$v_{3}$};
\draw [fill] (4.5,3) circle [radius=0.05];
\node [right] at (4.5,3) {$v_{4}$};
\draw [fill] (3.5,1) circle [radius=0.05];
\node [above left] at (3.5,1) {$v_{5}$};
\draw (3.5,5) circle [radius=0.05];
\node [below left] at (3.5,5) {$v_{5}$};
\end{tikzpicture}
\caption{$|V(G)|\leq 5$ leads to a contradiction.}\label{f:lessthan5}
\end{figure}

Thus $|V(G)|=6$ and it remains to consider two subcases:

\begin{enumerate}
\item[(i)] $v_3v_5\in E(G)$. In this case $v_1v_3$ lies on the non-facial 3-cycle $v_1v_3v_6v_1$. However, this leads to a contradiction, since the 6-hole is contained either in the 5-cycle $v_5v_3v_6v_1v_2v_5$ or in the 5-cycle $v_6v_3v_2v_5v_1v_6$. Hence by symmetry neither of the edges $v_3v_5,v_4v_5$ is allowed.
\item[(ii)] $v_3v_6,v_4v_6\in E(G)$. In this case, indicated in Figure \ref{f:2edges},  we may assume that the hole is contained in the planar 6-cycle $v_1v_5v_2v_4v_6v_3v_1$ and that the planar 6-cycle $v_1v_5v_2v_3v_6v_4v_1$ is triangulated. This implies that $v_2v_3$ is an $FF$ edge and so lies on non-facial 3-cycle. However, the only candidate cycle is $v_3v_2v_6v_3$ and if $v_2v_6$ lies in $E(G)$ then the hole is contained in the 5-cycle $v_1v_5v_2v_6v_3v_1$, a contradiction.

\begin{figure}[h!]
\centering
\begin{tikzpicture}

\draw [->, thick] (0.5,1) -- (4.5,1);
\draw [thick](4.5,1) -- (6.5,1);
\draw [<-, thick] (2.5, 5) -- (3.45,5);
\draw [thick] (3.55,5)--(6.5,5);
\draw [thick](0.5,5) -- (2.5,5);
\draw [->, thick] (6.5,1) -- (6.5,2.5);
\draw [thick](6.5,2.5) -- (6.5,5);
\draw [<-, thick] (0.5, 2.5) -- (0.5,5);
\draw [thick](0.5,1.65) -- (0.5,2.5);
\draw [thick](0.5,1) -- (0.5,1.55);

\draw [thick](3.5,2)--(3.5,4);
\draw [thick](3.5,2)--(2.5,3)--(3.5,4);
\draw [thick](3.5,2)--(4.5,3)--(3.5,4);
\draw [thick](3.5,1)--(3.5,2);
\draw [thick](3.5,4)--(3.5,4.95);
\draw [thick](2.5,3)--(0.53,1.63);
\draw [thick](4.5,3)--(6.47,4.37);

\draw [fill] (3.5,4) circle [radius=0.05];
\node [above right] at (3.5,4) {$v_1$};
\draw [fill] (3.5,2) circle [radius=0.05];
\node [below right] at (3.5,2) {$v_{2}$};
\draw [fill] (2.5,3) circle [radius=0.05];
\node [right] at (2.5,3) {$v_{3}$};
\draw [fill] (4.5,3) circle [radius=0.05];
\node [right] at (4.5,3) {$v_{4}$};
\draw [fill] (3.5,1) circle [radius=0.05];
\node [above left] at (3.5,1) {$v_{5}$};
\draw (3.5,5) circle [radius=0.05];
\node [below left] at (3.5,5) {$v_{5}$};
\draw (0.5,1.6) circle [radius=0.05];
\node [below right] at (0.5,1.6) {$v_{6}$};
\draw [fill] (6.5,4.4) circle [radius=0.05];
\node [above left] at (6.5,4.4) {$v_6$};
\end{tikzpicture}
\caption{Edges $v_3v_6, v_4v_6$ in $G$ leads to a contradiction.}\label{f:2edges}
\end{figure}

We have shown that no $FF$ edge is allowed. Suppose now that $G$ contains a face, described by the vertices  $v_1,v_2$ and $v_3$. Since there are no $FF$ edges, all edges $v_1v_2,v_2v_3$ and $v_1v_3$ lie on the boundary of the hole. Moreover, since they form a face of the surface graph, they cannot form a 3-cycle path in the boundary of the hole. Only 3 edges of the boundary cycle are left to be determined, so we may assume that the path $v_1v_2v_3$ lies on the boundary. Therefore, without loss of generality, there exists a vertex $v_4$ on the boundary that connects the two paths, $v_1v_2v_3$and $v_1v_3$, so we obtain the 5-path  $v_1v_3v_4v_1v_2v_3$. But this implies that the remaining edge of the 6-hole is $v_1v_3$, which would contradict graph simplicity. Hence the surface graph contains no faces and the proof is complete.
\end{enumerate}
\end{proof}

\begin{figure}[h!]
\centering
\begin{tikzpicture}
\path [fill=red1] (0.5,1)--(6.5,1)--(6.5,5)--(0.5,5)--(0.5,1);
%\path [fill=red1] (0.5,1.701)--(6.5,1.701)--(6.5,1)--(0.5,1)-- (0.5,1.701);

\draw [->, thick] (0.5,1) -- (4.5,1);
\draw [thick](4.5,1) -- (6.5,1);
\draw [<-, thick] (2.5, 5) -- (6.5,5);
\draw [thick](0.5,5) -- (2.5,5);
\draw [->, thick] (6.5,1) -- (6.5,2.5);
\draw [thick](6.5,2.5) -- (6.5,5);
\draw [<-, thick] (0.5, 2.5) -- (0.5,5);
\draw [thick](0.5,1) -- (0.5,2.5);

\draw [thick](3.5,1.7)--(3.5,3)--(3.5,4.3);
\draw [blue, thick](3.5,1)--(3.5,1.7);
\draw [blue, thick](3.5,4.3)--(3.5,5);

\draw [fill] (3.5,1.7) circle [radius=0.05];
\node [right] at (3.5,1.7) {$v_1$};
\draw [fill] (3.5,3) circle [radius=0.05];
\node [right] at (3.5,3) {$v_{2}$};
\draw [fill] (3.5,4.3) circle [radius=0.05];
\node [right] at (3.5,4.3) {$v_{3}$};
\end{tikzpicture}
\caption{The uncontractible surface graph $G^2_3$.}\label{f:G3,2}
\end{figure}

\end{document}